\theoremstyle{plain}
\newtheorem{theorem}{Theorem}[section]
\newtheorem{remark}{Remark}[section]
\newtheorem{lemma}{Lemma}[section]
\newtheorem{assumption}{Assumption}[section]
\newtheorem{corollary}{Corollary}[section]
\numberwithin{equation}{section}
\newcommand{\N}{\mathbb{N}}
\newcommand{\R}{\mathbb{R}}
\newcommand{\Id}{\mathrm{Id}}
\title{Bayesian view on the training of invertible residual networks for solving linear inverse problems}
\author{Clemens Arndt\thanks{ Center for Industrial Mathematics, University of Bremen, 28359 Bremen, Germany. Alphabetical author order. Emails: \texttt{\{carndt, heilenkoetter, iskem, judith.nickel\}@uni-bremen.de, \{sdittmer, tkluth\}@math.uni-bremen.de}.} \and S\"oren Dittmer\footnotemark[1] \footnotemark[2]\thanks{Cambridge Image Analysis Group, University of Cambridge, Cambridge CB3 0WA, UK} \and Nick Heilenk\"otter\footnotemark[1] \and Meira Iske\footnotemark[1] \and Tobias Kluth\footnotemark[1] \and Judith Nickel\footnotemark[1]}
\date{\today}
\begin{document}

\maketitle
\begin{abstract}
Learning-based methods for inverse problems, adapting to the data's inherent structure, have become ubiquitous in the last decade. Besides empirical investigations of their often remarkable performance, an increasing number of works addresses the issue of theoretical guarantees. Recently, \cite{iresnet_01_regtheory} exploited invertible residual networks (iResNets) to learn provably convergent regularizations given reasonable assumptions. They enforced these guarantees by approximating the linear forward operator with an iResNet. Supervised training on relevant samples introduces data dependency into the approach. An open question in this context is to which extent the data's inherent structure influences the training outcome, i.e., the learned reconstruction scheme. Here we address this delicate interplay of training design and data dependency from a Bayesian perspective and shed light on opportunities and limitations. We resolve these limitations by analyzing reconstruction-based training of the inverses of iResNets, where we show that this optimization strategy introduces a level of data-dependency that cannot be achieved by approximation training. We further provide and discuss a series of numerical experiments underpinning and extending the theoretical findings.

\end{abstract}

\section{Introduction}
\label{sec:introduction}


The mathematical field of inverse problems has many applications, e.g., imaging, image processing, and several more. 
Inverse problems come with characteristic difficulties summarized under the term ``ill-posedness.'' Typically, one wants to recover causes $x$, which discontinuously depend on some observed measurements $z$. However, a good reconstruction algorithm needs to be stable; otherwise, it cannot handle noisy measurement data. Still, one naturally wants the reconstructor to be as accurate as possible. This results in a compromise called regularization (see \cite{benning2018modern} for a recent survey on regularization theory). The more stable the reconstructor becomes, the more the set of causes for which it provides accurate results is restricted.

Hence, this set of accurate performance is critical, and one typically chooses it using \textit{prior knowledge} about the application-specific data. In imaging problems, this knowledge often amounts to solutions $x$ looking somehow ``natural.'' However, the mathematical characterization of natural images is challenging. Thus, learned methods often outperform in this area, learning stable and accurate reconstructions from given training data (see, e.g., the early survey \cite{arridge2019solving}).

While many experimental studies confirm the impressive performance of learned methods, the theoretical understanding remains limited. In particular, learned methods often lack stability guarantees. However, the topic is gaining in importance \cite{mukherjee2023learned}. In the present work, we address this issue by studying invertible residual networks (iResNets) \cite{behrmann2019invertible}. As proposed in \cite{iresnet_01_regtheory}, their invertibility makes them readily applicable to linear inverse problems. \cite{iresnet_01_regtheory} approximates the forward operation ($x \mapsto z$) using the iResNet, the iResNet's inverse, then solves the inverse problem ($z \mapsto x$). Here, one can control the regularization strength by choosing a hyperparameter of the iResNet that directly controls its stability.

The authors of \cite{iresnet_01_regtheory} also develop a regularization theory for these iResNets. For this purpose, they considered particular architectures and uncovered equivalences to filter functions from classical regularization theory. In the present article we now analyze what iResNets actually learn in practice from the given training data. For this purpose, the Bayesian view is suitable, as it encodes prior knowledge on $x$ and the measurement noise in $z$ as probability distributions. We consider two different ways of training, via the forward and via the inverse mapping, and investigate to which extent the iResNet uses the given information about the data to regularize inverse problems.

The manuscript is structured as follows:
Section \ref{sec:problem_setting} introduces the problem setting, basic assumptions, and the reconstruction approach using iResNets. The subsequent two sections contain the theoretical analysis of the iResNet's training in a Bayesian setting. First, Section \ref{sec:approx_train_Bayesian} considers the so-called approximation training, where the network is trained supervisedly to approximate the forward operator. In particular, we investigate what information the network learns from the training data distribution (i.e., the effect of prior distribution and noise on the network). Second, Section \ref{sec:reco_training} considers the so-called reconstruction training, where the iResNet's inverse is trained to map from noisy measurements to a reconstruction. Section \ref{sec:numeric_experiments} complements the theoretical analyses with extensive numerical experiments. We implement the two training types and underpin the theoretical findings of the previous sections.



\subsection*{Related literature}

The Bayesian theory for inverse problems differs from the functional analytic regularization theory. While the functional analytic theory focuses mainly on the stability and convergence of reconstruction algorithms, the Bayesian perspective considers the full posterior distribution and uncertainty estimation for reconstructions. Detailed introductions are given in \cite{kaipio2006statistical, dashti_2017, stuart_2010}. An overview of the basic theory and Bayesian methods for solving inverse problems is also contained in \cite{arridge2019solving}. Learning-based methods are particularly powerful for solving Bayesian inverse problems, e.g., \cite{adler2018deep} describes two different general concepts for an efficient application of neural networks in this framework.

\cite{Laumont2022} proposes a method that demonstrates the Bayesian theory's advantages for inverse problems using a trained denoiser in a plug-and-play Langevin algorithm. The denoiser is assumed to fulfill a Lipschitz condition (similar to the iResNet, see Section~\ref{sec:problem_setting}), implying guaranteed convergence of the algorithm to the posterior distribution. \cite{sherry2023} leverages convex analysis to create nonexpansive residual networks and uses them to solve inverse problems. This is particularly desirable for denoising and plug-and-play schemes. Furthermore, invertible neural networks are also of interest to generative modeling. In \cite{chen_behrmann_2019}, iResNets act as normalizing flows, i.e., learn to map from a base distribution to a target distribution and perform competitive or even superior to alternative architectures. \cite{iresnet_01_regtheory} provides a more detailed discussion of the literature concerning learned convergent regularization schemes.

\section{Problem setting and basic properties}
\label{sec:problem_setting}

We consider linear inverse problems based on the operator equation
\begin{equation} \label{eq:inverse_problem}
    Ax = z
\end{equation}
where $A \in L(X,X)$ is a self-adjoint and positive semidefinite operator and $X$ is a finite-dimensional inner product space, here $X = \R^n$. For simplification, we assume $\|A\|=1$, which a scaling of the operator can easily obtain. In practice, neural network domains tend to be finite-dimensional; this justifies the restriction to the finite-dimensional case.
Also, the Bayesian perspective becomes less standard if the underlying probability theory uses infinite-dimensional probability spaces. 
Due to the properties of $A$, there exist eigenvalues $\sigma_j^2 \in (0,1]$ and corresponding eigenvectors $v_j$, such that $\mathcal{N}(A)^\bot = \mathrm{span} \{v_j \, | \, j=1, ..., \tilde{n} \}$, $\tilde{n} \leq n$. We use this eigendecomposition in some of our theoretical analyses. 

The aim is to recover the unknown ground truth vector $x^\dagger$ as well as possible by only having access to a noisy observation $z^\delta = Ax^\dagger + \eta$. The noise $\eta$ is assumed to be distributed according to a probability density function (pdf) $p_H \colon X \to \R_{\geq 0}$. Since there may exist arbitrarily many solutions $x$ which could explain the data $z^\delta$, it is important to incorporate prior knowledge about the unknown solutions. The pdf $p_X \colon X \to \R_{\geq 0}$ encodes this knowledge. In practice, $p_X$ may describe the distribution of natural-looking images or the typical structure of a cross-section of the human body (e.g., in CT problems).

\begin{remark} \label{rem:general_inverse_problem}
    We can translate a more general linear inverse problem
    \begin{equation}
        \tilde{A} x = y
    \end{equation}
    with an arbitrary linear operator, $\tilde{A} \in L(X, Y)$ ($X$ and $Y$ being different spaces), into the above setting by considering $A = \tilde{A}^* \tilde{A}$ and $z = \tilde{A}^* y$.

    In this case the noise $\eta$ on $z$ may arise from noise $\tilde{\eta}$ on $y$ via $\eta = \tilde{A}^* \tilde{\eta}$. To illustrate this, let us consider the example of Gaussian noise $\tilde{\eta} \sim \mathcal{N}(0, \Sigma)$. Then, it holds $\eta = \tilde{A}^* \tilde{\eta} \sim \mathcal{N}(0, \tilde{A}^* \Sigma \tilde{A})$, which means that $\tilde{A}^*$ transforms the covariance matrix $\Sigma$. If $\tilde{A}$ has a nontrivial nullspace, the distribution of $\tilde{A}^*\tilde{\eta}$ is singular, and there exists no pdf. Nevertheless, it is possible to approximate the distribution, e.g., by adding $\varepsilon\, \Id$ to the covariance matrix or restricting the problem to $\mathcal{N}(\tilde{A})^\bot$.
\end{remark}

To solve the inverse problem \eqref{eq:inverse_problem}, we use the approach of \cite{iresnet_01_regtheory}, i.e., we approximate the forward operator $A$ with a (single-layer) invertible residual network (iResNet) 
\begin{equation}
    \varphi_\theta = \Id - f_\theta, \quad f_\theta \colon X \to X.
\end{equation} This is done by a supervised training of $\varphi_\theta$ for which a paired dataset $\{x^{(i)}, z^{\delta,(i)}\}_{i=1, ..., N}$  of i.i.d.\ (independent and identically distributed) samples $x^{(i)} \sim p_X$, $z^{\delta,(i)} - Ax^{(i)} \sim p_H$ is needed.
One can then use the trained network to compute a regularized solution of \eqref{eq:inverse_problem} by $\varphi_\theta^{-1} (z^\delta)$. Invertibility of $\varphi_\theta$ is guaranteed using the constraint 
\begin{equation}
    \mathrm{Lip}(f_\theta) \leq L
\end{equation}
for some $L < 1$
(see \cite[Lemma 2.1]{iresnet_01_regtheory}, \cite{behrmann2019invertible}).

While implicit knowledge about $p_X$ and $p_H$ via the given dataset is sufficient for training $\varphi_\theta$, we derive some theoretical results using these pdfs explicitly. For this purpose, we need to make the following assumptions.


\begin{assumption} \label{ass:prior_and_noise}
Let
    \begin{itemize}
        \item$p_X \colon X \to \R_{\geq 0}$ be a pdf $\left( \text{i.e., } \int_X p_X(x) \, \mathrm{d}x = 1 \right)$ with existing first and second moments (i.e., $p_X(x) \|x\|$ and $p_X(x) \|x\|^2$ are Lebesgue-integrable) and expectation value
        \begin{equation}
            \mu_X = \int_X p_X(x) x \, \mathrm{d}x, 
        \end{equation}
        \item $p_H \colon X \to \R_{\geq 0}$ be a pdf $\left( \text{i.e., } \int_X p_H(\eta) \, \mathrm{d}\eta = 1 \right)$ with existing first and second moments (i.e., $p_H(\eta) \|\eta\|$ and $p_H(\eta) \|\eta\|^2$ Lebesgue-integrable) and zero expectation $\left( \text{i.e., } \int_X p_H(\eta) \eta \, \mathrm{d}\eta = 0 \right)$, and
        \item the random variables $x \sim p_X$, $\eta \sim p_H$ be stochastically independent.
    \end{itemize}
\end{assumption}

The crucial condition to guarantee the invertibility of $\varphi_\theta$ is $\mathrm{Lip}(f_\theta) \leq  L < 1$. Consequently, the inverse $\psi_\theta = \varphi_\theta^{-1}$ fulfills a property describable as a combination of coercivity and Lipschitz-continuity, which, in turn, trivially implies strong monotonicity. We formulate this equivalence in the following lemma.

\begin{lemma}[Inverse of iResNet]
\label{lem:inverse_of_resnet}
For $\varphi \colon X \to X$ and $0 \leq  L < 1$, the following two conditions are equivalent:
\begin{enumerate}
    \item[(1)] $\exists\, f \colon X \to X$ with $\mathrm{Lip}(f) \leq  L$ such that $\varphi = \Id - f$
    \item[(2)] $\exists\, \psi \colon X \to X$ with
    \begin{equation} \label{eq:cond_on_inverse_net}
        \forall z_1, z_2 \in X \colon \qquad (1-L^2) \| \psi(z_1) - \psi(z_2) \|^2 + \|z_1 - z_2\|^2 \leq  2 \langle z_1 - z_2, \psi(z_1) - \psi(z_2) \rangle
    \end{equation}
    such that $\varphi = \psi^{-1}$.
\end{enumerate}
In particular, \eqref{eq:cond_on_inverse_net} guarantees the invertibility of $\psi$.
\end{lemma}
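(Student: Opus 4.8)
The plan is to notice that both implications rest on a single \emph{exact} algebraic equivalence, so that the genuine work lies only in the bookkeeping of a substitution and in the invertibility statements linking $\varphi$ and $\psi$. Concretely, I would pass between the ``$x$-side'' and the ``$z$-side'' through $z_i = \varphi(x_i)$, equivalently $x_i = \psi(z_i)$, and abbreviate $u := \psi(z_1)-\psi(z_2) = x_1 - x_2$ and $s := z_1 - z_2$. Since $f = \Id - \varphi$ gives $f(x_i) = x_i - \varphi(x_i) = x_i - z_i$, one has $f(x_1) - f(x_2) = u - s$. Expanding $\|u-s\|^2 = \|u\|^2 - 2\langle u,s\rangle + \|s\|^2$ shows that the pointwise contraction estimate $\|u-s\|^2 \le L^2\|u\|^2$ rearranges to exactly the inequality \eqref{eq:cond_on_inverse_net} written in $u$ and $s$. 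Thus $\mathrm{Lip}(f)\le L$ and \eqref{eq:cond_on_inverse_net} are two encodings of the same relation; everything else is about ranging the right variables over all of $X$.

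For $(1)\Rightarrow(2)$ I would first use that $\mathrm{Lip}(f)\le L<1$ makes $\varphi = \Id - f$ a bijection (Banach fixed point, cf.\ \cite[Lemma 2.1]{iresnet_01_regtheory}), so that $\psi := \varphi^{-1}$ is well defined and $\varphi = \psi^{-1}$. Given arbitrary $z_1,z_2\in X$, set $x_i := \psi(z_i)$; then $z_i = \varphi(x_i)$ and the identities above give $\|u-s\| = \|f(x_1)-f(x_2)\| \le L\|x_1-x_2\| = L\|u\|$. Squaring and rearranging yields \eqref{eq:cond_on_inverse_net}, which is (2).

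For $(2)\Rightarrow(1)$ and the final ``in particular'' claim I would first extract invertibility of $\psi$ from \eqref{eq:cond_on_inverse_net} itself. Injectivity is immediate: if $\psi(z_1)=\psi(z_2)$ the inequality forces $\|z_1-z_2\|^2\le 0$. Dropping the nonnegative $\psi$-term and using Cauchy--Schwarz on the right gives the two-sided bound $\tfrac{1}{1+L}\|z_1-z_2\| \le \|\psi(z_1)-\psi(z_2)\| \le \tfrac{1}{1-L}\|z_1-z_2\|$, so $\psi$ is bi-Lipschitz, continuous and proper; moreover \eqref{eq:cond_on_inverse_net} yields the strong monotonicity $\langle \psi(z_1)-\psi(z_2),\, z_1-z_2\rangle \ge \tfrac12\|z_1-z_2\|^2$. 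In the finite-dimensional setting assumed here, a continuous, proper, strongly monotone map is a bijection of $X$ (monotone-operator theory, or invariance of domain together with properness), so $\varphi := \psi^{-1}$ is defined on all of $X$. Setting $f := \Id - \varphi$ and, for arbitrary $x_1,x_2\in X$, $z_i := \varphi(x_i)$ (so $x_i = \psi(z_i)$ and $f(x_1)-f(x_2)=u-s$), reading \eqref{eq:cond_on_inverse_net} backwards gives $\|u-s\|^2\le L^2\|u\|^2$, i.e.\ $\mathrm{Lip}(f)\le L$, which is (1).

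The calculation itself is routine and exact, so I expect the only real obstacle to be the invertibility argument in $(2)\Rightarrow(1)$: one must upgrade the injectivity that \eqref{eq:cond_on_inverse_net} supplies for free to surjectivity onto all of $X$, since this is precisely what licenses writing every $x$ as $\psi(z)$ in the substitution. The bi-Lipschitz and strong-monotonicity bounds above should settle this cleanly in finite dimensions; in an infinite-dimensional setting one would instead invoke the Browder--Minty theorem.
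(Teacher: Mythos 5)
Your proposal is correct, and the algebraic core --- expanding $\|u-s\|^2 \le L^2\|u\|^2$ with $u = \psi(z_1)-\psi(z_2)$, $s = z_1-z_2$, and passing between the $x$- and $z$-sides via $z_i = \varphi(x_i)$ --- together with the direction $(1)\Rightarrow(2)$ matches the paper's proof exactly. Where you genuinely diverge is the surjectivity step in $(2)\Rightarrow(1)$: the paper gives a self-contained, constructive argument, defining the iteration $z_{k+1} = z_k + (1-L^2)\,(x - \psi(z_k))$ and using \eqref{eq:cond_on_inverse_net} to show $\|x - \psi(z_{k+1})\| \le L\,\|x - \psi(z_k)\|$, so that $\psi(z_k) \to x$ geometrically and $(z_k)$ is Cauchy; you instead extract strong monotonicity $\langle \psi(z_1)-\psi(z_2),\, z_1-z_2\rangle \ge \tfrac12\|z_1-z_2\|^2$ and the bi-Lipschitz bounds, and then invoke either invariance of domain plus properness (open and closed image in connected $\R^n$) or the Browder--Minty theorem. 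Both routes are valid: your argument is shorter and, as you note, generalizes directly to reflexive infinite-dimensional spaces via Browder--Minty, but it leans on nontrivial external machinery (invariance of domain, or Minty's theorem, which in finite dimensions ultimately rests on Brouwer-type results). The paper's iteration is entirely elementary and has the side benefit of exhibiting an explicit convergent scheme for evaluating $\psi^{-1}$ --- structurally the same damped fixed-point inversion the authors later use numerically in Section~\ref{sec:numeric_experiments} --- so the constructive detour is not just pedantry but foreshadows the implementation.
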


\begin{proof}
    We begin with $(1) \Rightarrow (2)$.
    For arbitrary $x_1, x_2 \in X$, the condition $\mathrm{Lip}(\Id - \varphi) \leq  L$ implies
    \begin{align}
        \label{eq:lipschitz_id_minus_phi}
        & & \|(x_1 - \varphi(x_1)) - (x_2 - \varphi(x_2))\|^2 &\leq  L^2 \|x_1 - x_2\|^2 \notag \\
        &\Leftrightarrow& \|x_1 - x_2\|^2 - 2 \langle x_1 - x_2, \varphi(x_1) - \varphi(x_2) \rangle + \|\varphi(x_1) - \varphi(x_2) \|^2 &\leq  L^2 \|x_1 - x_2\|^2 \notag \\
        &\Leftrightarrow & (1-L^2)\|x_1 - x_2\|^2 + \|\varphi(x_1) - \varphi(x_2) \|^2 &\leq  2 \langle x_1 - x_2, \varphi(x_1) - \varphi(x_2) \rangle.
    \end{align}
    Since $\mathrm{Lip}(f) \leq  L$ implies invertibility of $\varphi$ (see \cite[Lemma 2.1]{iresnet_01_regtheory}, \cite{behrmann2019invertible}), we can define $\psi = \varphi^{-1}$ and $z_i = \varphi(x_i)$. This yields 
    \begin{align}
        & & (1-L^2)\|\psi(z_1) - \psi(z_2)\|^2 + \|z_1 - z_2 \|^2 &\leq  2 \langle z_1 - z_2, \psi(z_1) - \psi(z_2) \rangle
    \end{align}
    for arbitrary $z_1, z_2 \in X$.

    For the converse implication we now prove that \eqref{eq:cond_on_inverse_net} guarantees invertibility of $\psi$. Injectivity and Lipschitz continuity follow directly by applying the Cauchy-Schwarz inequality. To prove surjectivity, we construct a convergent sequence $(z_k)$ such that $\psi(z_k)$ converges to an arbitrary $x \in X$. We recursively define 
    \begin{equation}
        z_{k+1} = z_k + (1-L^2) (x - \psi(z_k)), \qquad z_0 \in X.
    \end{equation}
    It can be observed that
    \begin{align}
            2 \langle x - \psi(z_k), \psi(z_{k+1}) - \psi(z_k) \rangle &= 2 \langle z_{k+1} - z_k, \psi(z_{k+1}) - \psi(z_k) \rangle \frac{1}{1-L^2}\notag\\
            &\geq  \|\psi(z_{k+1}) - \psi(z_k) \|^2 + \frac{1}{1-L^2} \|z_{k+1} - z_k\|^2\notag \\
            &= \|\psi(z_{k+1}) - \psi(z_k) \|^2 + \frac{1}{1-L^2} \|(1-L^2)(x - \psi(z_k))\|^2 \notag\\
            &= \|\psi(z_{k+1}) - \psi(z_k) \|^2 + (1-L^2) \|x - \psi(z_k)\|^2
    \end{align}
    holds. Using this, it follows
    \begin{align}
            &\quad \,\, \|x - \psi(z_{k+1})\|^2 = \|(x - \psi(z_k)) - (\psi(z_{k+1}) - \psi(z_k))\|^2 \notag\\
            &= \|x - \psi(z_k)\|^2 - 2 \langle x - \psi(z_k), \psi(z_{k+1}) - \psi(z_k) \rangle + \|\psi(z_{k+1}) - \psi(z_k)\|^2 \notag\\
            &\leq  \|x - \psi(z_k)\|^2 - \|\psi(z_{k+1}) - \psi(z_k) \|^2 - (1-L^2) \|x - \psi(z_k)\|^2 + \|\psi(z_{k+1}) - \psi(z_k)\|^2 \notag\\
            &= L^2 \|x - \psi(z_k)\|^2.
    \end{align}
    Thus, we have $\|x - \psi(z_{k+1})\| \leq  L \|x - \psi(z_k)\|$, which implies $\|x - \psi(z_k)\| \leq  L^k \|x - \psi(z_0)\|$. Hence, it holds $\psi(z_k) \to x$ and \eqref{eq:cond_on_inverse_net} guarantees convergence of $(z_k)$. Since $x$ was arbitrary, $\psi$ is surjective and therefore invertible.    
    With the argumentation from the beginning in reversed order, we obtain the implication $(2) \Rightarrow (1)$.
\end{proof}

The following remark simplifies condition~\eqref{eq:cond_on_inverse_net} for $X=\R$.
\begin{remark}\label{rem:invertability_1_d}
    In case of $X = \R$ (one-dimensional space), condition \eqref{eq:cond_on_inverse_net} becomes
\begin{equation}
    \forall z_1, z_2 \in \R \colon \qquad \frac{1}{1+L} \leq  \frac{\psi(z_1) - \psi(z_2)}{z_1 - z_2} \leq  \frac{1}{1-L},
\end{equation}
which is a constraint on the slope of $\psi$ from above and from below.
\end{remark}

This motivates us to think of the condition on $\psi$ as a Lipschitz constraint similar to the one that applies to an iResNet. The following remark shows a direct connection between the iResNet and its inverse.

\begin{remark}[Inverses of iResNets are iResNets]\label{rem:inverse_of_ires_is_ires}
    From Lemma~\ref{lem:inverse_of_resnet}, we can deduce that one can write the inverse of an iResNet as a scaled iResNet. The constraint~\eqref{eq:cond_on_inverse_net} is equivalent to
    \begin{align}
   & & (1-L^2)^2 \| \psi(z_1) - \psi(z_2) \|^2 - 2 (1-L^2) \langle z_1 - z_2, \psi(z_1) - \psi(z_2) \rangle+ \|z_1 - z_2\|^2 &\leq  L^2\|z_1 - z_2\|^2 \notag \\
  &\Leftrightarrow    & \| (\Id-(1-L^2)\psi)(z_1)-(\Id-(1-L^2)\psi)(z_2)\| &\leq  L \|z_1 - z_2\| \notag \\
  &\Leftrightarrow  & \mathrm{Lip}(\Id-(1-L^2)\psi) & \leq L.
\end{align}
By defining $g := \Id-(1-L^2)\,\psi$
we obtain
\begin{align}
    \psi = \frac{1}{1-L^2}(\Id-g)\quad \text{where }\mathrm{Lip}(g)\leq L,
\end{align}
which is a scaled iResNet $\Id-g$ where $g$ satisfies the same Lipschitz constraint as $f$ in the forward mapping.
\end{remark}

\section{Approximation training}\label{sec:approx_train_Bayesian}


%

In \cite{iresnet_01_regtheory}, the \textit{approximation training} is introduced, in which the iResNet $\varphi_\theta$ is trained to approximate $A$, i.e., to solve
\begin{equation}\label{eq:approx_training}
    \min_{\theta \in \Theta_L} \frac{1}{N} \sum_{i=1}^N \| \varphi_\theta(x^{(i)}) - z^{\delta,(i)} \|^2
\end{equation}
for a given dataset of $N$ pairs $(x^{(i)}, z^{\delta,(i)}) \in X \times X$, $z^{\delta,(i)}=A x^{(i)} + \eta^{(i)}$.
The parameter space $\Theta_L$ encodes the architecture choice, and the Lipschitz constraint $\mathrm{Lip}(f_\theta) \leq L$. This setting was partly motivated by the so-called local approximation property (\cite[Theorem 3.1]{iresnet_01_regtheory}) characterizing convergence guarantees for the regularized solution $\varphi_\theta^{-1}(z^\delta)$ as $\delta\to 0$. In \cite{iresnet_01_regtheory}, specific network architectures were trained according to the approximation training and analyzed under which conditions they satisfy the properties of a convergent regularization scheme. This revealed a connection to the classical linear filter-based regularization theory. 

In contrast, we now aim to derive more general results without making restrictions on the architecture of the iResNet apart from the constraint on the Lipschitz constant of $f$. This enables us to analyze the influence of the noise and prior distribution on the trained network and, especially, the regularized solution. To this end, we consider the case of an infinite amount of training data allowing us to interpret Equation~\eqref{eq:approx_training} from a Bayesian point of view. To be more precise, taking the limit $N\to \infty$ in Equation~\eqref{eq:approx_training} and exploiting the independence of $x$ and $\eta$ (Assumption \ref{ass:prior_and_noise}) results in
\begin{equation} \label{eq:approx_training_bayesian}
    \min_{\theta \in \Theta_L} 
    \mathbb{E}_{x \sim p_X} \mathbb{E}_{\eta \sim p_H} \left(  \| \varphi_\theta(x) - A x - \eta \|^2 \right).
\end{equation}
The Euclidian norm can be decomposed into $\| \varphi_\theta(x) - A x - \eta \|^2 = \| \varphi_\theta(x) - A x\|^2 - 2 \langle \varphi_\theta(x) - A x, \eta \rangle + \|\eta\|^2$. Again because of the independence of $x$ and $\eta$ and due to $\mathbb{E}_{p_H}(\eta) = 0$, the mixed term vanishes in expectation. Therefore, we obtain
\begin{align}
    &\min_{\theta \in \Theta_L} \mathbb{E}_{x \sim p_X} \left( \| \varphi_\theta(x) - A x \|^2 + \mathbb{E}_{\eta\sim p_H}(\| \eta \|^2) \right) \notag \\
    \Leftrightarrow \quad & \min_{\theta \in \Theta_L} \mathbb{E}_{x \sim p_X} \left( \| \varphi_\theta(x) - A x \|^2 \right) .
\end{align}
Consequently, the noise does not influence the training. We could interpret this positively since the noise cannot lead to approximation errors of $\varphi_\theta$. However, a big drawback is that $\varphi_\theta^{-1}$, which shall regularize the inverse problem, neither depends on the noise level. Accordingly, the amount of regularization has to be set manually by choice of $L$ for the noise level $\delta$ (see \cite{iresnet_01_regtheory}) and is not data-dependent.


What remains is the influence of the prior distribution $p_X$ on the training of $\varphi_\theta$. We are especially interested in how $\varphi_\theta$ acts on the different eigenspaces of $A$ to analyze the dependence on the size of the eigenvalues. Therefore, we
make the rather strong assumption of  stochastic independence of the components $x_j = \langle x, v_j \rangle$:
\begin{assumption} \label{ass:independence}
    Let $x_j \sim p_{X,j}$ with $p_X(x)=\prod_j p_{X,j}(x_j)$.
\end{assumption}

Observe that this assumption is implicitly made when using Tikhonov regularization with $\|\cdot\|^2$-penalty term.
Furthermore, Assumption~\ref{ass:prior_and_noise} implies that $p_{X,j}$ has existing first and second moments with 
\begin{equation}
    \mu_{X,j} = \int_\R p_{X,j}(x_j) x_j \, \mathrm{d}x,
\end{equation}
which follows from Fubini's Theorem and the independence of the components. In this setting, a diagonal structure of the network
\begin{equation}\label{eq:diagonal}
    f_\theta(x) = \sum_j f_{j, \theta} (\langle x, v_j \rangle) v_j \quad \text{with} \quad f_{j, \theta} \colon \R \to \R, 
\end{equation}
as in \cite{iresnet_01_regtheory}, is sufficient. 
Hence, the above minimization problem can be analyzed for each component separately due to properties of the eigendecomposition, and we get
\begin{equation}\label{eq:training_Bayesian_view}
    \min_{\theta \in \Theta_L} \mathbb{E}_{x_j \sim p_{X,j}} \left( | (1 - \sigma_j^2) x_j - f_{j, \theta}(x_j) |^2 \right).
\end{equation}
This is equivalent to a 1d-setting with $A \colon \R \to \R$, $x_j \mapsto \sigma_j^2\, x_j$.

In the following, instead of minimizing over a parameter space $\Theta_L$, we directly consider a function space $\mathcal{F}$ encoding the Lipschitz constraint ($\mathrm{Lip}(f) \leq L$) and the architecture choice.
For simplicity, in what follows, we omit the index $j$ and consider
\begin{equation} \label{eq:bayes_train_probl_comp_wise}
    \min_{f \in \mathcal{F}} \int_{\R} p_X(x) | (1 - \sigma^2) x - f(x) |^2 \, \mathrm{d}x.
\end{equation}

If $\mathcal{F}$ allows for (affine) linear functions and in case of $1-\sigma^2 \leq  L$, we can indicate the trivial solution $f = (1-\sigma^2)\, \Id$. Obviously, this solution is unique on $\mathrm{supp}(p_X)$. Thus, for eigenvalues $\sigma^2$, which are not too small, the training leads to a perfect approximation of the forward operator and no regularization of the inverse problem. For $1-\sigma^2 > L$, the minimization problem gets more interesting due to the Lipschitz constraint.
First, we derive the following result, which builds the basis for a subsequent generalization.

\begin{lemma} \label{lem:approx_training_linear}
Let $\mathcal{F} = \{f \in C(\R) \, | \, \exists\, m \in [-L, L], b \in \R \colon \,f(x) = mx + b\}$
and $L < 1-\sigma^2$. Then,
\begin{equation}
    f^*(x) = Lx + (1-\sigma^2-L)\, \mu_X
\end{equation}
is the unique solution of the minimization problem \eqref{eq:bayes_train_probl_comp_wise}. 
\end{lemma}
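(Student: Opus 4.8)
The plan is to reduce this infinite-dimensional minimization to an explicit two-variable quadratic. Since every $f \in \mathcal{F}$ is determined by a pair $(m,b)$ with $m \in [-L,L]$ and $b \in \R$, I would write the objective as a function $J(m,b)$ of these scalars. Setting $c := 1-\sigma^2$ and expanding the square, the existence of the first and second moments guaranteed by Assumption~\ref{ass:prior_and_noise} lets me integrate term by term,
\[
J(m,b)=\int_\R p_X(x)\big((c-m)x-b\big)^2\,\d x=(c-m)^2\,\mathbb{E}_{p_X}(x^2)-2(c-m)\,b\,\mu_X+b^2 ,
\]
so the problem becomes minimizing this quadratic over the strip $[-L,L]\times\R$.

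First I would minimize over $b$ for fixed $m$. The coefficient of $b^2$ equals $1>0$, so $J(m,\cdot)$ is strictly convex with unique stationary point $b^*(m)=(c-m)\mu_X$. Substituting this back and using $\mathbb{E}_{p_X}(x^2)-\mu_X^2=\mathrm{Var}_{p_X}(x)$ collapses the reduced objective to
\[
J\big(m,b^*(m)\big)=(c-m)^2\big(\mathbb{E}_{p_X}(x^2)-\mu_X^2\big)=(c-m)^2\,\mathrm{Var}_{p_X}(x).
\]
Next I would minimize this remaining function of $m$ over $[-L,L]$. Because $p_X$ is a genuine Lebesgue density, its support is not a single point, whence $\mathrm{Var}_{p_X}(x)>0$; thus $m\mapsto(c-m)^2\,\mathrm{Var}_{p_X}(x)$ is a strictly convex parabola with vertex at $m=c=1-\sigma^2$. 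The hypothesis $L<1-\sigma^2$ places this vertex strictly to the right of $[-L,L]$, so the parabola is strictly decreasing there and its minimum is attained uniquely at $m^*=L$. Then $b^*=b^*(L)=(1-\sigma^2-L)\mu_X$, giving $f^*(x)=Lx+(1-\sigma^2-L)\mu_X$, as claimed.

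Uniqueness then follows by chaining the two strict minimizations: $m^*=L$ is the unique minimizer of the reduced objective and, for that slope, $b^*$ is the unique minimizer of the strictly convex $J(L,\cdot)$; hence the minimizing pair $(m,b)$, and therefore the function $f^*\in\mathcal{F}$, is unique. I do not anticipate a real obstacle in the calculation; the only step requiring genuine care is this uniqueness argument, and in particular the verification that $\mathrm{Var}_{p_X}(x)>0$. Without strict positivity of the variance (a degenerate prior concentrated at a single point) every slope $m$ would yield the same reduced objective and the minimizer would cease to be unique, so it is essential to exploit that a pdf cannot be supported on a Lebesgue-null set.
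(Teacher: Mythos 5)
Your proof is correct, and it takes a more elementary route than the paper. The paper sets up a Lagrangian $K(m,b,\lambda)$ with the constraint $m^2 \leq L^2$ and works through the KKT conditions: it derives $b=(1-\sigma^2-m)\mu_X$ from stationarity in $b$, substitutes into the stationarity condition for $m$ to obtain $m=(1-\sigma^2)\,\mathrm{Var}_{p_X}(x)/(\mathrm{Var}_{p_X}(x)+\lambda)$, and then argues that $L<1-\sigma^2$ forces $\lambda>0$, whence complementary slackness gives $m=L$. You instead profile out $b$ directly and observe that the reduced objective is the parabola $(c-m)^2\,\mathrm{Var}_{p_X}(x)$ with vertex at $c=1-\sigma^2$, strictly to the right of $[-L,L]$, so strict monotonicity on the interval pins down $m^*=L$; no multipliers are needed, and the exclusion of $m=-L$ (which in the paper's argument rests on the sign of the formula for $m$, since complementary slackness alone only yields $m=\pm L$) is automatic. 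Your approach also has the merit of surfacing an assumption the paper uses only implicitly: both arguments require $\mathrm{Var}_{p_X}(x)>0$ (the paper divides by $\mathrm{Var}_{p_X}(x)+\lambda$ and tacitly uses $\mathrm{Var}>0$ when ruling out $\lambda=0$), and your observation that a law with a Lebesgue density cannot be a point mass, so the variance is automatically positive, closes this cleanly and shows exactly where uniqueness would fail for a degenerate prior. The trade-off is that the paper's KKT template generalizes directly to the noisy reconstruction-training analogue (Lemma 4.3 and Appendix C), where the same machinery is reused, whereas your profiling argument would have to be redone there; for the present lemma, though, your derivation is shorter and fully rigorous.
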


\begin{proof}
For a function $f$ of the form $f(x) = mx + b$ with the constraint $m^2 \leq  L^2$, we can solve \eqref{eq:bayes_train_probl_comp_wise} by using the Lagrangian
\begin{equation}
    K(m,b, \lambda) = \int_\R p_X(x) | (1 - \sigma^2 - m) x - b |^2 \, \mathrm{d}x + \lambda (m^2 - L^2),
\end{equation}
where the integral is well-defined due to the existence of the first and second moments of $p_X$.
The convexity, coercivity, and continuity of the integral term w.r.t. $(m,b)$ imply that there exists a minimizer. Therefore, we can calculate the minimizer using the necessary conditions (KKT conditions) 
\begin{align} \label{eq:dLdm}
    \frac{\partial K}{\partial m} (m,b,\lambda) = -\int_\R 2 p_X(x) \left((1 - \sigma^2 - m) x - b \right) x \, \mathrm{d}x + 2\lambda m \stackrel{!}{=} 0, \\
    \label{eq:dLdb}
    \frac{\partial K}{\partial b} (m,b,\lambda) = -\int_\R 2 p_X(x) \left((1 - \sigma^2 - m) x - b \right) \, \mathrm{d}x \stackrel{!}{=} 0,\\
    \label{eq:lagrange_constraint}
    \lambda (m^2 - L^2) \stackrel{!}{=} 0, \\
    \lambda \geq  0.
\end{align}
Rearranging \eqref{eq:dLdm} for $m$ and \eqref{eq:dLdb} for $b$ leads to
\begin{align}
m = \frac{\int_\R 2 p_X(x) \left((1 - \sigma^2) x - b \right) x \, \mathrm{d}x}{\int_\R 2 p_X(x) x^2 \, \mathrm{d}x + 2\lambda} &= \frac{(1-\sigma^2)\mathbb{E}_{ p_X}(x^2) - b  \mu_X}{\mathbb{E}_{ p_X}(x^2) + \lambda}, \\
    b = \frac{\int_\R 2 p_X(x) (1 - \sigma^2 - m) x \, \mathrm{d}x}{\int_\R 2 p_X(x) \, \mathrm{d}x} &= (1 - \sigma^2 - m) \mu_X,
\end{align}
where we use the abbreviated notation $\mathbb{E}_{ p_X}$ for $\mathbb{E}_{x\sim p_X}$. 
Now, plugging $b = (1 - \sigma^2 - m)  \mu_X$ into the equation for $m$ implies
\begin{align}
    & & m &= \frac{(1-\sigma^2)\mathbb{E}_{ p_X}(x^2) - (1-\sigma^2 - m)  \mu_X^2}{\mathbb{E}_{ p_X}(x^2) + \lambda} \notag\\
    &\Leftrightarrow & \left(1 - \frac{ \mu_X^2}{\mathbb{E}_{ p_X}(x^2) + \lambda} \right) m &= \frac{(1-\sigma^2) \left(\mathbb{E}_{ p_X}(x^2) -   \mu_X^2 \right)}{\mathbb{E}_{ p_X}(x^2) + \lambda}\notag \\
    &\Leftrightarrow & \frac{\mathrm{Var}_{p_X}(x) + \lambda}{\mathbb{E}_{ p_X}(x^2) + \lambda}  m &= \frac{(1-\sigma^2)\mathrm{Var}_{p_X}(x)}{\mathbb{E}_{ p_X}(x^2) + \lambda} \notag \\
    &\Leftrightarrow & m &= (1-\sigma^2)\frac{\mathrm{Var}_{p_X}(x) }{\mathrm{Var}_{p_X}(x)+ \lambda}
\end{align}
Since $1-\sigma^2 > L$ holds by assumption, we need $\lambda >0$ to ensure $m\leq  L$. Then, \eqref{eq:lagrange_constraint} directly implies $m = L$. As $m$ is uniquely determined we also know that $b$ is unique with $b = (1-\sigma^2 - L)\, \mu_X$.
\end{proof}

The previous lemma provides the prerequisite for the following theorem, where $\mathcal{F}$ contains arbitrary Lipschitz continuous functions with constrained Lipschitz constant.

\begin{theorem}\label{thm:approx_training_lipschitz}
Let $\mathcal{F} = \{f \in C^{0,1}(\R) \, | \, \mathrm{Lip}(f) \leq  L\}$.
Then,
\begin{equation}
    f^*(x) = 
    \begin{cases}
    (1-\sigma^2) x & \text{if } 1-\sigma^2 \leq L, \\
    Lx + (1-\sigma^2-L)  \mu_X & \text{if } 1-\sigma^2 > L
    \end{cases}
\end{equation}
is the solution of the minimization problem \eqref{eq:bayes_train_probl_comp_wise}. 
This solution is unique on $\mathrm{supp}(p_X)$ and for $1-\sigma^2 > L$ even on the convex hull of $\mathrm{supp}(p_X)$.
\end{theorem}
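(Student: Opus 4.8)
The plan is to treat the two cases separately, the first being immediate and the second resting on a convexity-plus-monotonicity argument. Throughout write $J(f) = \int_\R p_X(x)\,|(1-\sigma^2)x - f(x)|^2\,\d x$; this is a convex quadratic functional of $f$, and $\mathcal{F}$ is a convex set. If $1-\sigma^2 \le L$, then $f^* = (1-\sigma^2)\,\Id$ lies in $\mathcal{F}$ and makes the integrand vanish identically, so it is a global minimizer with $J(f^*)=0$; any other minimizer must also achieve $J=0$, hence agree with $f^*$ for $p_X$-a.e.\ $x$ and thus, by continuity, on all of $\mathrm{supp}(p_X)$. The remaining work is the case $1-\sigma^2 > L$, where $f^*$ is exactly the affine minimizer from Lemma~\ref{lem:approx_training_linear}; the point is to show that enlarging $\mathcal{F}$ to all of $C^{0,1}(\R)$ with $\mathrm{Lip}\le L$ does not help.

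For this I would verify a first-order optimality condition and invoke convexity. Since $\mathcal{F}$ is convex, for any $g \in \mathcal{F}$ the segment $f_t = f^* + t(g-f^*)$, $t\in[0,1]$, stays in $\mathcal{F}$, and $J(f_t)$ is a convex quadratic in $t$ with nonnegative leading coefficient. Hence $f^*$ is a global minimizer as soon as $\frac{\d}{\d t}J(f_t)\big|_{t=0}\ge 0$ for every such $g$. Computing this derivative and using the explicit residual $(1-\sigma^2)x - f^*(x) = (1-\sigma^2-L)(x-\mu_X)$ together with $1-\sigma^2-L>0$, the optimality condition reduces to
\begin{equation}
    \int_\R p_X(x)\,(x-\mu_X)\,\bigl(g(x)-f^*(x)\bigr)\,\d x \;\le\; 0 \qquad \text{for all } g \in \mathcal{F}.
\end{equation}

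The crux --- and the step I expect to be the main obstacle --- is establishing this inequality for arbitrary Lipschitz $g$. The key observation is that $h := g - f^*$ is \emph{non-increasing}: it is absolutely continuous with $h' = g' - L \le 0$ almost everywhere, because $\mathrm{Lip}(g)\le L$ while $f^*$ has constant slope $L$. Exploiting that $p_X$ is centered in the sense $\int_\R p_X(x)(x-\mu_X)\,\d x = 0$, I would subtract the constant $h(\mu_X)$ and rewrite the left-hand side as $\int_\R p_X(x)(x-\mu_X)\bigl(h(x)-h(\mu_X)\bigr)\,\d x$. Since $h$ is non-increasing, the integrand $(x-\mu_X)(h(x)-h(\mu_X))$ is pointwise nonpositive, the two factors having opposite signs on each side of $\mu_X$; this gives the claim. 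Integrability is guaranteed by the existence of the first and second moments in Assumption~\ref{ass:prior_and_noise} and the linear growth of $h$.

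Finally, for uniqueness I would argue in two stages. On $\mathrm{supp}(p_X)$ uniqueness holds in both cases by a parallelogram identity: if $f_1,f_2$ are minimizers then so is $\tfrac12(f_1+f_2)\in\mathcal{F}$, and comparing values forces $\int_\R p_X|f_1-f_2|^2 = 0$, hence $f_1=f_2$ on $\mathrm{supp}(p_X)$ by continuity. For $1-\sigma^2>L$ the sharper statement on the convex hull follows from the maximal slope of $f^*$: any minimizer $g$ agrees with $f^*$ on $\mathrm{supp}(p_X)$, so for support points $a\le w\le b$ it satisfies $g(b)-g(a)=L(b-a)$; the bound $\mathrm{Lip}(g)\le L$ then forces $g$ to be affine with slope $L$ on $[a,b]$, whence $g(w)=f^*(w)$. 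As every point of $\mathrm{conv}(\mathrm{supp}(p_X))$ lies between two support points, $g=f^*$ on the whole convex hull. This rigidity argument breaks down when $1-\sigma^2\le L$ precisely because there $f^*$ need not attain the maximal slope, consistent with the theorem claiming uniqueness only on $\mathrm{supp}(p_X)$ in that case.
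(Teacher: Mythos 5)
Your proposal is correct, but it takes a genuinely different route from the paper in the main case $1-\sigma^2 > L$. The paper argues pointwise: for an arbitrary competitor $g\in\mathcal{F}$ it first produces an intersection point $x_0$ with $g(x_0)=(1-\sigma^2)x_0$ (possible because $\mathrm{Lip}(g)\le L<1-\sigma^2$), introduces the tangent affine function $\tilde{f}(x)=L(x-x_0)+(1-\sigma^2)x_0$, shows by a sign analysis of the cross term that $|(1-\sigma^2)x-g(x)|^2\ge|(1-\sigma^2)x-\tilde{f}(x)|^2$ for every $x$, and then falls back on Lemma~\ref{lem:approx_training_linear} to compare $\tilde{f}$ with $f^*$ within the affine class. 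You instead verify the variational inequality for the convex functional $J$ over the convex set $\mathcal{F}$ directly at $f^*$: the residual identity $(1-\sigma^2)x-f^*(x)=(1-\sigma^2-L)(x-\mu_X)$ reduces optimality to $\int_\R p_X(x)(x-\mu_X)\bigl(g(x)-f^*(x)\bigr)\,\d x\le 0$, and your two observations --- that $h=g-f^*$ is non-increasing because $f^*$ exhausts the maximal slope $L$, and that $x\mapsto p_X(x)(x-\mu_X)$ integrates to zero so one may subtract $h(\mu_X)$ --- make the integrand pointwise nonpositive; this is a clean monotone-correlation (Chebyshev-type) argument. Your route buys several things: it bypasses the existence of the intersection point and the paper's case split $g=\tilde{f}$ versus $g\neq\tilde{f}$; it does not need Lemma~\ref{lem:approx_training_linear} at all for optimality (only for motivating the formula of $f^*$); and the quadratic expansion it rests on even yields the quantitative estimate $J(g)-J(f^*)\ge\int_\R p_X\,|g-f^*|^2\,\d x$, from which uniqueness $p_X$-a.e.\ falls out immediately (your parallelogram argument is an equivalent alternative). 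The paper's route is more elementary --- no derivative of $J$, everything pointwise --- and recycles the affine lemma it has already proved. Your uniqueness upgrade to the convex hull is the same rigidity argument as the paper's: agreement on $\mathrm{supp}(p_X)$ plus $g(b)-g(a)=L(b-a)$ for support points $a\le b$ forces $g$ affine with slope $L$ in between, since in $\R$ every point of the convex hull lies between two support points; and your closing remark correctly identifies why this sharpening fails for $1-\sigma^2\le L$. All side conditions you invoke (integrability from the first and second moments of $p_X$, a.e.\ differentiability of Lipschitz functions, passing from $p_X$-a.e.\ equality to equality on $\mathrm{supp}(p_X)$ by continuity) are sound.
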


\begin{proof}
We define $F \colon \mathcal{F} \to \R$,
\begin{equation}
F(f) = \int_\R p_X(x) |(1-\sigma^2)x - f(x)|^2 \, \mathrm{d}x
\end{equation}
and start with the case $1-\sigma^2 \leq L$. Obviously, it holds $F(f^*) = 0$, so $f^*$ is a minimizer. Using the fundamental lemma of the calculus of variations, one can deduce the uniqueness on $\mathrm{supp}(p_X)$.

Now, consider $1-\sigma^2 > L$ and let $g \in C^{0,1}(\R)$, $\mathrm{Lip}(g) \leq  L$ be an arbitrary function. We will show that $F(g) > F(f^*)$ holds, if $g \neq f^*$ on the convex hull of $\mathrm{supp}(p_X)$.

First, we verify that $F$ is well-defined, i.e, for $f\in \mathcal{F}$
\begin{align}
 F(f) & \leq 2 \int_\R p_X(x) ( (1-\sigma^2)^2 x^2 + |f(x)|^2 )  \, \mathrm{d}x \notag \\
 & = 2 \int_\R p_X(x) ( (1-\sigma^2)^2 x^2 + |f(x)-f(0) + f(0) |^2 )  \, \mathrm{d}x \notag \\
& \leq  2 \int_\R p_X(x)  (1-\sigma^2)^2 x^2  \, \mathrm{d}x  + 4 \int_\R p_X(x) |f(x)-f(0)|^2  \, \mathrm{d}x + 4 \int_\R p_X(x) |f(0)|^2   \, \mathrm{d}x \notag \\
& \leq 2 (1-\sigma^2)^2 \int_\R p_X(x) x^2  \, \mathrm{d}x  + 4 L^2 \int_\R p_X(x) x^2  \, \mathrm{d}x + 4 f(0)^2 < \infty
\end{align}
holds, as the second moment of $p_X$ exists. 

Due to $L < 1-\sigma^2$, the function $g$ has always a smaller slope than $(1-\sigma^2)\,\Id$, which implies that there exists an intersection point $x_0$ such that $g(x_0) = (1-\sigma^2)\,x_0$. The affine linear function $\tilde{f}(x) = L (x - x_0) + (1-\sigma^2)\, x_0$ possesses the same intersection point.

In case of $g=\tilde{f}$ on the convex hull of $\mathrm{supp}(p_X)$, we simply apply Lemma \ref{lem:approx_training_linear}. This shows that $g$ can be the minimizer only if $\tilde{f} = f^*$.

In the case of $g\neq \tilde{f}$, let us examine the integrand of $F(g)$. For any $x \in \R$, it holds
\begin{align}
        & \quad \, \, |(1-\sigma^2)x - g(x)|^2 = |(1-\sigma^2)x - (g(x) - \tilde{f}(x)) - \tilde{f}(x)|^2\notag \\
        &= |(1-\sigma^2)x- \tilde{f}(x)|^2 - 2((1-\sigma^2)x - \tilde{f}(x))(g(x) - \tilde{f}(x)) + |g(x) - \tilde{f}(x)|^2.
\end{align}
For $x \leq  x_0$, we have $(1-\sigma^2)x - \tilde{f}(x) \leq  0$ and $\mathrm{Lip}(g) \leq  L$ implies $g(x) - \tilde{f}(x) \geq  0$. Thus, we obtain
\begin{equation}
-2\left((1-\sigma^2)x - \tilde{f}(x)\right)\left(g(x) - \tilde{f}(x)\right) \geq  0,
\end{equation}
which implies $|(1-\sigma^2)x - g(x)|^2 \geq  |(1-\sigma^2)x- \tilde{f}(x)|^2$. Analogously, for $x \geq  x_0$, we observe that $(1-\sigma^2)x - \tilde{f}(x) \geq  0$ and  $g(x) - \tilde{f}(x) \leq  0$, which also implies $|(1-\sigma^2)x - g(x)|^2 \geq  |(1-\sigma^2)x- \tilde{f}(x)|^2$. 
Therefore, it holds $F(g) \geq  F(\tilde{f})$.

Finally, we show that $F(g) = F(\tilde{f})$ implies $\tilde{f} =g$ on the convex hull of $\mathrm{supp}(p_X)$. If $F(g) = F(\tilde{f})$, it holds
\begin{equation}
\int_\Omega p_X(x) |(1-\sigma^2)x - g(x)|^2 - p_X(x) |(1-\sigma^2)x - \tilde{f}(x)|^2 \, \mathrm{d}x = 0.
\end{equation}
for any measurable $\Omega \subset \R$, since the term under the integral is always greater than or equal to zero. The fundamental lemma of the calculus of variations then implies
\begin{equation}
p_X(x) |(1-\sigma^2)x - g(x)|^2 = p_X(x) |(1-\sigma^2)x - \tilde{f}(x)|^2
\end{equation}
for almost all $x \in \R$. Thus, for any $x \in \mathrm{supp}(p_X)$, it holds $g(x) = \tilde{f}(x)$ and for $x_1, x_2 \in \mathrm{supp}(p_X)$, we obtain $g(x_1) - g(x_2) = L(x_1 - x_2)$. Consequently, for any $x$ in between of $x_1$ and $x_2$, $g(x) = \tilde{f}(x)$ must also hold, otherwise $\mathrm{Lip}(g) \leq  L$ would be violated. Hence, $g$ and $\tilde{f}$ coincide on the convex hull of $\mathrm{supp}(p_X)$.
\end{proof}

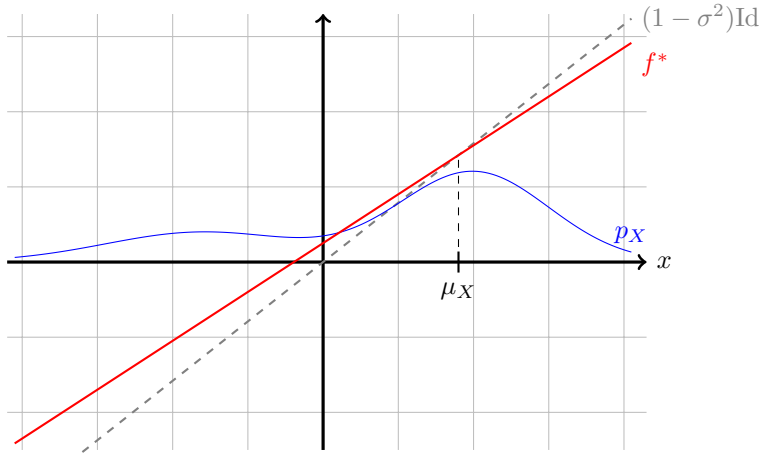
\begin{figure}[h]
\centering
\begin{tikzpicture}
  \draw[step=1.0, lightgray, thin] (-4.2,-2.5) grid (4.3,3.3);
  \draw[->, thick, very thick] (-4.2, 0) -- (4.3, 0) node[right] {$x$};
  \draw[->, thick, very thick] (0, -2.5) -- (0, 3.3);
  \draw[scale=1, domain=-3.2:4.1, variable=\t, gray, thick, dashed] plot[samples=161] ({\t}, {(1-0.21)*\t}) node[right] {$(1-\sigma^2)\Id$};
   \draw[scale=1, domain=-4.1:4.1, variable=\t, blue] plot[samples=161] ({\t}, {1.2*exp(-0.5*(\t-2)*(\t-2)) + 0.4*exp(-0.3*(\t+1.6)*(\t+1.6))}) node[above] {$p_{X}$};
   \draw[scale=1, domain=-4.1:4.1, variable=\t, red, thick] plot[samples=161] ({\t}, {(0.65)*(\t-1.8)+(1-0.21)*1.8}) node[below right] {$f^*$};
\draw[dashed] (1.8,0.2) -- (1.8,1.4);
\draw[thick] (1.8,0.14) -- (1.8,-0.14) node[below] {$\mu_{X}$};
\end{tikzpicture}
\caption{The residual function $f^*$ which results from the approximation training (Theorem \ref{thm:approx_training_lipschitz}) is affine linear and only depends on $\sigma^2$, $L$ and $\mu_{X}$. In case of $\sigma^2 < 1- L$, $f^*$ exhibits the maximum possible slope of $L$ and intersects $(1-\sigma)^2\, \Id$ at the mean $\mu_{X}$ of the prior distribution.}\label{fig:Sol approx train}
\end{figure}




Figure~\ref{fig:Sol approx train} exemplifies the solution $f^*$ for a Gaussian mixture prior $p_X$.
The inverse $\varphi_\theta^{-1}$ corresponding to the minimizer of \eqref{eq:bayes_train_probl_comp_wise} derived in the previous theorem provides a convergent regularization scheme, which we discuss in the following remark.

\begin{remark}\label{remark:sq_soft_tsvd}
    One can express $\varphi_\theta^{-1}$ as a filter-based regularization scheme with the squared soft TSVD filter function with bias. The affine linear diagonal architecture was already analyzed in \cite[Lemma 4.2]{iresnet_01_regtheory}. For
    \begin{equation}
        f_j(x_j) = \min\{1-\sigma_j^2, L\} x_j + \max\{0, 1-\sigma_j^2-L\} \mu_{X,j}
    \end{equation}
    (which coincides with the solution in Theorem \ref{thm:approx_training_lipschitz}), it holds
    \begin{align}
        \varphi_\theta^{-1}(z) &= \hat{b}_L +  \sum_{j} \hat{r}_L (\sigma_j^2) \langle z, v_j \rangle,\\
        \hat{r}_L(\sigma_j^2) = \frac{1}{\max\{\sigma_j^2, 1-L\}}, &\qquad \hat{b}_L = \sum_{\sigma_j^2 < 1-L} \frac{1-\sigma_j^2-L}{1-L} \mu_{X,j} v_j.
    \end{align}
    By \cite[Lemma 3.3]{iresnet_01_regtheory}, this filter scheme with bias defines a convergent regularization method for $L \to 1$ in case of vanishing noise and a suitable parameter choice $L(\delta)$.
\end{remark}

The previous results show that approximation training of a diagonal architecture always leads to an affine linear $\varphi_\theta$, independent of prior and noise distribution ($p_X$, $p_H$).
Hence, an affine linear residual layer is the best architecture choice for this task. This implies that $\varphi_\theta^{-1}$ is a reconstruction scheme with minimal data dependency since only the mean $\mu_X$ of the prior distribution has an influence. 
Furthermore, $\varphi_\theta^{-1}$ is equivalent to a classical regularization scheme, where one predefines the amount of regularization by choosing the parameter $L$ depending on the noise level.

For the general approximation training problem 
\begin{equation}\label{eq:approx_general_network}
    \min_{f\in C(\R^n, \R^n), \ \mathrm{Lip}(f)\leq L} \mathbb{E}_{x\sim p_X} \left( \| (\Id - f) (x) - Ax\|^2 \right) 
\end{equation}
the previous investigations suggest that the solution depends on the second moments of the prior distribution $p_X$ at most. A detailed consideration of the general setting for the approximation training is beyond the scope of the present work.

\section{Reconstruction training}
\label{sec:reco_training}


%
%
%
%
The results in the last section show that the approximation training is insufficient for learning noise and more data-dependent regularization. To address this, we instead consider the training
\begin{equation}\label{eq:reco_training}
    \min_{\theta \in \Theta} \frac{1}{N} \sum_{i=1}^N \|\varphi_\theta^{-1}(A x^{(i)} + \eta^{(i)}) - x^{(i)} \|^2 \quad \text{s.t. } \mathrm{Lip}(f_\theta) \leq L
\end{equation}
for given training data $\{x^{(i)}\}_i \subset X$, noise realizations $\{\eta^{(i)}\}_i \in X$ and $\varphi_\theta = \Id - f_\theta$. This is also motivated by sufficient conditions for the convergence analysis in \cite[Remark 4.1]{iresnet_01_regtheory}.
We refer to this training scheme as the \textit{reconstruction training}.
One can also interpret this reconstruction training as a supervised training on data pairs $(x^{(i)}, z^{\delta,(i)})$ for $\varphi_{\theta}^{-1}(z^{\delta,(i)}) \approx x^{(i)}$ with $z^{\delta,(i)} = Ax^{(i)} + \eta^{(i)}$.

Using Lemma \ref{lem:inverse_of_resnet}, we know that
\begin{equation} \label{eq:constraint_reco_training}
    \begin{split}
        &\min_{\theta \in \Theta} \frac{1}{N} \sum_{i=1}^N \|\psi_\theta(A x^{(i)} + \eta^{(i)}) - x^{(i)} \|^2\\
        \quad \text{s.t. } &(1-L^2) \|\psi_\theta(z_1) - \psi_\theta(z_2)\|^2 + \|z_1 - z_2\|^2 \leq 2 \langle \psi_\theta(z_1) - \psi_\theta(z_2), z_1 - z_2 \rangle \quad \forall z_1, z_2 \in X
    \end{split}
\end{equation}
is an equivalent problem,
assuming that
the architectures of $\varphi_\theta$ and $\psi_\theta$ can approximate any continuous function. 

Similar to the approximation training, we analyze the case of an unlimited amount of training data with $x \sim p_X$ and $\eta \sim p_H$ fulfilling Assumption \ref{ass:prior_and_noise}.
Thus, we obtain the minimization problem
\begin{equation} \label{eq:reco_training_integral}
 \min_{\psi \in \Psi} \int_X \int_X p_X(x)\, p_H(\eta)\, \|\psi(Ax + \eta) - x\|^2 \, \mathrm{d}\eta \, \mathrm{d}x,
\end{equation}
where the set of functions $\Psi$ represents the choice of the architecture and the constraints on the parameters. In this context, we also make use of the density function of $z^\delta = Ax + \eta$, which is given by
\begin{equation} \label{eq:pdf_of_z}
    p_Z(z^\delta) = \int_X p_X(x) p_H(z^\delta - Ax) \, \mathrm{d}x.
\end{equation}
With this, we can define the space of $p_Z$-weighted $L^2$-functions as
\begin{align}
    L_{p_Z}^2(X,X) &= \{ \psi \colon X \to X \, | \, \|\psi\|_{p_Z, 2} < \infty\}, \\
    \|\psi\|_{p_Z,2}^2 &= \int_X p_Z(z) \|\psi(z)\|_X^2 \, \mathrm{d}z,
\end{align}
which is a Hilbert space. Note that functions from $L_{p_Z}^2(X,X)$ are (only) well-defined on $\mathrm{supp}(p_Z) \subset X$, which is sufficient for our purposes.

At first, we consider the unconstrained case of $\Psi = L_{p_Z}^2(X,X)$. In this setting, the conditional mean\footnote{Expectation value corresponding to $p(x|z^\delta) = \frac{p_H(z^\delta - Ax) p_X(x)}{p_Z(z^\delta)}$.} $\hat{\psi}(z^\delta) = \mathbb{E}(x | z^\delta)$ is the solution of \eqref{eq:reco_training_integral} which is in line with the established theory in statistical inverse problems (see, e.g., conditional mean estimator in the discussion of Bayes cost estimators in \cite{kaipio2006statistical} or \cite[Prop. 2]{adler2018deep}).

\begin{lemma} \label{lem:unconstrained_reco_training}
    Let Assumption \ref{ass:prior_and_noise} hold and $\Psi = L_{p_Z}^2(X,X)$. Then, 
    \begin{equation}
        \hat{\psi} = \left(z^\delta \mapsto \mathbb{E}(x | z^\delta) \right) = \left(z^\delta \mapsto \int_{X} p(x | z^\delta) x \, \mathrm{d}x\right)
    \end{equation}
    is the solution of \eqref{eq:reco_training_integral}, which is unique w.r.t.\ the $L_{p_Z}^2$-norm.
\end{lemma}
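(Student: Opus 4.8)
The plan is to reduce the variational problem \eqref{eq:reco_training_integral} to a pointwise minimization in $z^\delta$ and then invoke the elementary fact that the mean minimizes the expected squared distance. First I would rewrite the objective. Substituting $z = Ax + \eta$ (so $\eta = z - Ax$, with unit Jacobian) and then interchanging the order of integration via Tonelli's theorem — permissible since the integrand is nonnegative — turns \eqref{eq:reco_training_integral} into
\begin{equation*}
    F(\psi) := \int_X \int_X p_X(x)\, p_H(z - Ax)\, \|\psi(z) - x\|^2 \, \mathrm{d}x \, \mathrm{d}z = \int_X p_Z(z) \int_X p(x \mid z)\, \|\psi(z) - x\|^2 \, \mathrm{d}x \, \mathrm{d}z,
\end{equation*}
where I used the factorization $p_X(x)\, p_H(z - Ax) = p_Z(z)\, p(x \mid z)$ coming from \eqref{eq:pdf_of_z} and the definition of the posterior in the footnote. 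In this form the inner integral depends on $\psi$ only through the single vector $\psi(z) \in X$, so the problem decouples across $z$.

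Next I would check that $\hat\psi$ is admissible, i.e.\ $\hat\psi \in L_{p_Z}^2(X,X)$. By Jensen's inequality applied to the posterior, $\|\hat\psi(z)\|^2 = \|\mathbb{E}(x \mid z)\|^2 \leq \mathbb{E}(\|x\|^2 \mid z)$, whence
\begin{equation*}
    \|\hat\psi\|_{p_Z,2}^2 \leq \int_X p_Z(z) \int_X p(x \mid z)\, \|x\|^2 \, \mathrm{d}x \, \mathrm{d}z = \mathbb{E}_{x \sim p_X}(\|x\|^2) < \infty
\end{equation*}
by the existence of the second moment of $p_X$ (Assumption \ref{ass:prior_and_noise}).

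The heart of the argument is a global completing-the-square identity. For arbitrary $\psi \in L_{p_Z}^2(X,X)$ I would expand, for fixed $z$,
\begin{equation*}
    \|\psi(z) - x\|^2 - \|\hat\psi(z) - x\|^2 = \|\psi(z)\|^2 - \|\hat\psi(z)\|^2 - 2 \langle \psi(z) - \hat\psi(z), x \rangle,
\end{equation*}
integrate against $p(x \mid z)$ and use $\int_X p(x \mid z)\, x \, \mathrm{d}x = \hat\psi(z)$ to collapse the inner integral to $\|\psi(z) - \hat\psi(z)\|^2$. Integrating against $p_Z$ then yields
\begin{equation*}
    F(\psi) - F(\hat\psi) = \int_X p_Z(z)\, \|\psi(z) - \hat\psi(z)\|^2 \, \mathrm{d}z = \|\psi - \hat\psi\|_{p_Z,2}^2 \geq 0,
\end{equation*}
with equality precisely when $\psi = \hat\psi$ in $L_{p_Z}^2$. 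This simultaneously establishes that $\hat\psi$ is a minimizer and that it is the unique minimizer in the $L_{p_Z}^2$-norm.

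The main obstacle is purely measure-theoretic bookkeeping rather than conceptual: I must justify the change of variables and the Tonelli interchange, and — crucially — ensure that the cross term $\int_X p_Z(z) \langle \psi(z) - \hat\psi(z), \hat\psi(z) \rangle \, \mathrm{d}z$ arising in the expansion is absolutely convergent so that splitting $F(\psi) - F(\hat\psi)$ into its three pieces is legitimate. This follows from the Cauchy--Schwarz inequality together with $\psi, \hat\psi \in L_{p_Z}^2$, but it is exactly the step where one has to restrict attention to $\psi$ of finite $p_Z$-weighted energy (which is built into the choice $\Psi = L_{p_Z}^2$) and where the existence of first and second moments in Assumption \ref{ass:prior_and_noise} is indispensable.
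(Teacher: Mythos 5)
Your proposal is correct, and it reaches the conclusion by a genuinely different route than the paper. The paper's proof is variational: it first checks $F(\psi)<\infty$ on $L_{p_Z}^2(X,X)$, notes convexity of $F$, computes the G\^{a}teaux derivative by expanding $F(\psi+h)$, sets $\partial F(\hat{\psi})h=0$ for all $h$, and invokes the fundamental lemma of the calculus of variations to \emph{derive} $\hat{\psi}(z^\delta)=\int_X p_X(x)\,p_H(z^\delta-Ax)\,x\,\mathrm{d}x / p_Z(z^\delta)$, identifying it afterwards as the posterior mean via Bayes' formula; Jensen's inequality then shows $\hat{\psi}\in L_{p_Z}^2(X,X)$. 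You instead \emph{verify} the known candidate through the exact completing-the-square identity $F(\psi)-F(\hat{\psi})=\|\psi-\hat{\psi}\|_{p_Z,2}^2$, which delivers minimality and $L_{p_Z}^2$-uniqueness in a single stroke, with the same Jensen argument for admissibility and Cauchy--Schwarz to legitimize splitting the cross term. Notably, your key identity is precisely the content of the paper's Lemma~\ref{lem:reco training equivalent formulation}, which the paper establishes separately (for arbitrary subsets $\Psi\subset L_{p_Z}^2(X,X)$) immediately after Lemma~\ref{lem:unconstrained_reco_training}; your argument in effect merges the two results, so it proves slightly more than asked and makes the projection interpretation of the constrained training transparent from the outset. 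What the paper's route buys in exchange is that it does not presuppose the answer: the Euler--Lagrange computation produces the conditional mean rather than checking it, which is the more instructive path if one does not already know the minimizer, whereas your route is shorter, avoids differentiability bookkeeping for $F$, and obtains uniqueness without any appeal to strict convexity or an almost-everywhere argument beyond $\|\psi-\hat{\psi}\|_{p_Z,2}=0$.
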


\begin{proof}
    We denote the objective function by $F \colon L_{p_Z}^2(X,X) \to [0, \infty)$,
    \begin{equation}
        F(\psi) = \int_X \int_X p_X(x)\, p_H(z^\delta-Ax)\, \|\psi(z^\delta) - x\|^2 \, \mathrm{d}z^\delta \, \mathrm{d}x,
    \end{equation}
    which coincides with \eqref{eq:reco_training_integral} with the substitution $z^\delta = Ax + \eta$.
    Note that for all $\psi \in L_{p_Z}^2(X,X)$, it holds $F(\psi) < \infty$ since
    \begin{equation}
        \|\psi(z^\delta) - x\|^2 = \|\psi(z^\delta)\|^2 - 2 \langle \psi(z^\delta), x \rangle + \|x\|^2 \leq 2\|\psi(z^\delta)\|^2 + 2\|x\|^2
    \end{equation}
    and the integrals
    \begin{align}
        &\int_X \int_X p_X(x)\, p_H(z^\delta-Ax)\, \|\psi(z)\|^2 \, \mathrm{d}z^\delta \, \mathrm{d}x = \int_X p_Z(z^\delta)\, \|\psi(z^\delta)\|^2 = \|\psi\|_{p_Z, 2}^2, \notag \\
        &\int_X \int_X p_X(x)\, p_H(z^\delta-Ax) \,\|x\|^2 \, \mathrm{d}z^\delta = \int_X p_X(x)\, \|x\|^2 \int_X p_H(z^\delta-Ax) \, \mathrm{d}z^\delta \, \mathrm{d}x = \int_X p_X(x)\, \|x\|^2 \, \mathrm{d}x
    \end{align}
    are both finite.
    
    Besides, note that $F$ is convex w.r.t.\ $\psi$ since $\psi \mapsto \|\psi(z^\delta) - x\|^2$ is convex for any $x, z^\delta \in X$. Thus, we can find the minimizer of $F$ by setting its derivative to zero.

    To compute the derivative of $F$, we consider
    \begin{align}
            F(\psi + h) &= \int_X \int_X p_X(x)\, p_H(z^\delta-Ax)\, \|\psi(z^\delta) + h(z^\delta) - x\|^2 \, \mathrm{d}z^\delta \, \mathrm{d}x \notag\\
            &= \int_X \int_X p_X(x)\, p_H(z^\delta-Ax)\, \|\psi(z^\delta) - x\|^2 \, \mathrm{d}z^\delta \, \mathrm{d}x \notag\\
            &+ 2\int_X \int_X p_X(x)\, p_H(z^\delta-Ax)\, \langle \psi(z^\delta) - x,  h(z^\delta) \rangle \, \mathrm{d}z^\delta \, \mathrm{d}x\notag \\
            &+ \int_X \int_X p_X(x)\, p_H(z^\delta-Ax)\, \| h(z^\delta)\|^2 \, \mathrm{d}z^\delta \, \mathrm{d}x
    \end{align}
    The first of the three summands is $F(\psi)$, the last one equals $\|h\|_{p_Z, 2}^2$ and the second summand equals $\partial F(\psi) h$. Note that the second summand is finite since $F(\psi)$ and $\|h\|_{p_Z, 2}^2$ are both finite.
    Using Fubini's theorem, we obtain
    \begin{align}
        \partial F(\psi) h &= 2\int_X \int_X p_X(x)\, p_H(z^\delta- Ax)\, \langle \psi(z^\delta) - x,  h(z^\delta) \rangle \, \mathrm{d}z^\delta \, \mathrm{d}x \notag \\
        &= 2\int_X \left\langle \int_X p_X(x)\, p_H(z^\delta - Ax)\, ( \psi(z^\delta) - x )\, \mathrm{d}x, \, h(z^\delta) \right\rangle \, \mathrm{d}z^\delta.
    \end{align}
    We are looking for $\hat{\psi}$ such that $\partial F(\hat{\psi}) h = 0$ for any $h \in L_{p_Z}^2(X,X)$. Hence, the fundamental lemma of the calculus of variations implies 
    \begin{align}
        & & \int_X p_X(x)\, p_H(z^\delta- Ax)\, ( \hat{\psi}(z^\delta) - x )\, \mathrm{d}x &= 0 \notag\\
        &\Leftrightarrow & p_Z(z^\delta) \cdot \hat{\psi}(z^\delta) &= \int_X p_X(x)\, p_H(z^\delta- Ax) x \, \mathrm{d}x \notag\\
        &\Leftrightarrow & \hat{\psi}(z^\delta) &= \frac{\int_X p_X(x)\, p_H(z^\delta - Ax)\, x \, \mathrm{d}x}{p_Z(z^\delta)}
    \end{align}
    for almost all $z^\delta \in \mathrm{supp}(p_Z)$.
    According to Bayes' formula
    \begin{equation}
        p(x | z^\delta) = \frac{p(z^\delta | x)\, p_X(x)}{p_Z(z^\delta)} = \frac{p_H(z^\delta - Ax)\, p_X(x)}{p_Z(z^\delta)},
    \end{equation}
    $\hat{\psi}(z^\delta)$ is the expectation value of the posterior density function $p(x | z^\delta)$.

    Finally, we need to make sure that $\hat{\psi}(z^\delta) = \mathbb{E}(x|z^\delta)$ is an $L_{p_Z}^2$-function. For this, we use Jensen's inequality for conditional expectation values \cite[Theorem 8.20, Corollary 8.21]{klenke2020probability} and obtain
    \begin{equation}
        \|\mathbb{E}(x|z^\delta)\|^2 = \sum_{j=1}^n |\mathbb{E}(x_j|z^\delta)|^2 \leq \sum_{j=1}^n \mathbb{E} \left(|x_j|^2 \, \big| \, z^\delta \right) = \mathbb{E} \left( \|x\|^2 \, \big| \, z^\delta \right).
    \end{equation}
    Then, by the definition of conditional expectations, we get
    \begin{equation}
        \int_X p_Z(z^\delta)\, \mathbb{E} \left( \|x\|^2 \, \big| \, z^\delta \right) \, \mathrm{d}z^\delta = \mathbb{E}_{p_Z} \left(\mathbb{E} \left( \|x\|^2 \, \big| \, z^\delta \right) \right) = \mathbb{E}(\|x\|^2),
    \end{equation}
    which is finite, and the proof is complete.
\end{proof}

Next, we consider the constrained reconstruction training, where we encode an arbitrary constraint, e.g., \eqref{eq:constraint_reco_training}, by choosing $\Psi$ to be a suitable subset of $L_{p_Z}^2(X,X)$.

\begin{lemma}\label{lem:reco training equivalent formulation}
   Let Assumption \ref{ass:prior_and_noise} hold, $\Psi$ be an arbitrary subset of $L_{p_Z}^2(X,X)$ and let $\hat{\psi}: X \to X$, $z^\delta \mapsto \mathbb{E}(x|z^\delta)$ be the conditional mean estimator. Then, the minimization problem \eqref{eq:reco_training_integral} is equivalent to
    \begin{equation}\label{eq:min_distance_to_posterior}
        \min_{\psi \in \Psi} \int_X p_Z(z^\delta)\, \| \psi(z^\delta) - \hat{\psi}(z^\delta) \|^2 \, \mathrm{d}z^\delta.
    \end{equation}
    Note that the existence of an actual minimizer is only guaranteed for closed $\Psi$.
\end{lemma}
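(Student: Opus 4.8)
The plan is to prove that the objective $F$ of \eqref{eq:reco_training_integral} (written in the form used in the proof of Lemma \ref{lem:unconstrained_reco_training}, i.e.\ after the substitution $z^\delta = Ax + \eta$) and the objective of \eqref{eq:min_distance_to_posterior} differ by an additive constant that does not depend on $\psi$. Once this is established, both functionals attain their infima at exactly the same points of any set $\Psi$, so the two minimization problems are equivalent. I would start from the pointwise identity obtained by inserting $\pm\hat{\psi}(z^\delta)$,
\begin{equation}
\|\psi(z^\delta) - x\|^2 = \|\psi(z^\delta) - \hat{\psi}(z^\delta)\|^2 + 2\langle \psi(z^\delta) - \hat{\psi}(z^\delta),\, \hat{\psi}(z^\delta) - x\rangle + \|\hat{\psi}(z^\delta) - x\|^2,
\end{equation}
and integrate it against $p_X(x)\, p_H(z^\delta - Ax)$ over both $x$ and $z^\delta$, splitting $F(\psi)$ into three contributions.

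The first contribution reduces to $\int_X p_Z(z^\delta)\,\|\psi(z^\delta) - \hat{\psi}(z^\delta)\|^2\, \mathrm{d}z^\delta$, which is precisely the objective of \eqref{eq:min_distance_to_posterior}, since its integrand is independent of $x$ and $\int_X p_X(x)\, p_H(z^\delta - Ax)\, \mathrm{d}x = p_Z(z^\delta)$ by \eqref{eq:pdf_of_z}. The third contribution is manifestly independent of $\psi$ and supplies the additive constant $C$. The crux is the cross term: applying Fubini's theorem to interchange the integrations, it equals
\begin{equation}
2\int_X \left\langle \psi(z^\delta) - \hat{\psi}(z^\delta),\ \int_X p_X(x)\, p_H(z^\delta - Ax)\,(\hat{\psi}(z^\delta) - x)\, \mathrm{d}x\right\rangle \mathrm{d}z^\delta,
\end{equation}
and the inner integral vanishes identically because $\hat{\psi}(z^\delta)\, p_Z(z^\delta) = \int_X p_X(x)\, p_H(z^\delta - Ax)\, x\, \mathrm{d}x$, which is exactly the characterization of the conditional mean derived in Lemma \ref{lem:unconstrained_reco_training}. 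Hence $F(\psi) = \int_X p_Z(z^\delta)\,\|\psi(z^\delta) - \hat{\psi}(z^\delta)\|^2\, \mathrm{d}z^\delta + C$ with $C$ independent of $\psi$, giving the claimed equivalence.

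The main obstacle is purely technical: making sure the decomposition hides no $\infty-\infty$ cancellation and that Fubini genuinely applies. I would handle this by showing each of the three integrals is individually finite. The first is finite because $\psi, \hat{\psi} \in L_{p_Z}^2(X,X)$, the membership of $\hat{\psi}$ having been verified in Lemma \ref{lem:unconstrained_reco_training}. The constant term is finite by the same second-moment estimate used there, namely $\|\hat{\psi}(z^\delta) - x\|^2 \leq 2\|\hat{\psi}(z^\delta)\|^2 + 2\|x\|^2$ combined with the existence of the second moment of $p_X$ from Assumption \ref{ass:prior_and_noise}. The cross term is then finite, and the interchange of integrals legitimate, by Cauchy--Schwarz applied to the two $L_{p_Z}^2$-functions. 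Regarding the caveat in the statement, the reformulation \eqref{eq:min_distance_to_posterior} exhibits the objective as the squared $L_{p_Z}^2$-distance to the fixed element $\hat{\psi}$; this makes the role of closedness transparent, since attainment of the infimum over $\Psi$ is then an issue of nearest-point existence in the Hilbert space $L_{p_Z}^2(X,X)$ and is not part of the equivalence itself, which holds for arbitrary $\Psi$.
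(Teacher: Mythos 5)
Your proposal is correct and follows essentially the same route as the paper's proof: both reduce to the defining identity $p_Z(z^\delta)\,\hat{\psi}(z^\delta) = \int_X p_X(x)\, p_H(z^\delta - Ax)\, x \, \mathrm{d}x$ from Lemma \ref{lem:unconstrained_reco_training} together with Fubini to kill the cross term, differing only in that you insert $\pm\hat{\psi}(z^\delta)$ and identify the $\psi$-independent constant afterwards, while the paper subtracts $\|\hat{\psi}(z^\delta)-x\|^2$ at the outset and expands the difference. Your explicit verification that all three contributions are individually finite (via Cauchy--Schwarz for the cross term) is a careful touch that the paper handles by reference to the finiteness estimates already established in Lemma \ref{lem:unconstrained_reco_training}.
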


\begin{proof}
    The minimization problem \eqref{eq:reco_training_integral} is equivalent to
    \begin{equation} \label{eq:min_psi_hatpsi}
        \min_{\psi \in \Psi} \int_X \int_X p_X(x)\, p_H(z^\delta - Ax) \left(\|\psi(z^\delta) - x\|^2 - \|\hat{\psi}(z^\delta) - x\|^2 \right) \, \mathrm{d}z^\delta \, \mathrm{d}x.
    \end{equation}
    In the proof of Lemma \ref{lem:unconstrained_reco_training}, we have already established that the integrals are finite. To split the integral term into two parts, we use
    \begin{equation}
        \|\psi(z^\delta) - x\|^2 - \|\hat{\psi}(z^\delta) - x\|^2 = \|\psi(z^\delta)\|^2 - \|\hat{\psi}(z^\delta)\|^2 - \langle 2x, \psi(z^\delta) - \hat{\psi}(z^\delta) \rangle.
    \end{equation}
    Fubini's theorem and the definition of $p_Z$ \eqref{eq:pdf_of_z} implies
    \begin{equation}
         \int_X \int_X p_X(s)\, p_H(z^\delta - Ax) \left(\|\psi(z^\delta)\|^2 - \|\hat{\psi}(z^\delta)\|^2 \right) \, \mathrm{d}z^\delta \, \mathrm{d}x = \int_X  p_Z(z^\delta) \left(\|\psi(z^\delta)\|^2 - \|\hat{\psi}(z^\delta)\|^2 \right) \, \mathrm{d}z^\delta.
    \end{equation}
    Again using Fubini's theorem and the definition of $\hat{\psi}(z^\delta) = \mathbb{E}(x|z^\delta)$ (see proof of Lemma \ref{lem:unconstrained_reco_training}), we obtain
    \begin{align}
        &\quad \, \, \int_X \int_X p_X(x)\, p_H(z^\delta - Ax)\, \langle 2x, \psi(z^\delta) - \hat{\psi}(z^\delta) \rangle \, \mathrm{d}z^\delta \, \mathrm{d}x  \notag \\
        &= \int_X \left\langle \int_X p_X(x)\, p_H(z^\delta - Ax)\, 2x  \, \mathrm{d}x, \, \psi(z^\delta) - \hat{\psi}(z^\delta) \right\rangle\, \mathrm{d}z^\delta \notag \\
        &= \int_X p_Z(z^\delta) \left\langle 2\int_X \frac{p_X(x)\, p_H(z^\delta - Ax)}{p_Z(z^\delta)} x  \, \mathrm{d}x, \, \psi(z^\delta) - \hat{\psi}(z^\delta) \right\rangle\, \mathrm{d}z^\delta \notag \\
        &= \int_X p_Z(z^\delta) \left\langle 2\hat{\psi}(z^\delta), \, \psi(z^\delta) - \hat{\psi}(z^\delta) \right\rangle\, \mathrm{d}z^\delta.
    \end{align}
    Thus, \eqref{eq:min_psi_hatpsi} is equivalent to
    \begin{equation}
        \min_{\psi \in \Psi} \int_X p_Z(z^\delta) \left(\|\psi(z^\delta)\|^2 - \|\hat{\psi}(z^\delta)\|^2 - \left\langle 2\hat{\psi}(z^\delta), \, \psi(z^\delta) - \hat{\psi}(z^\delta) \right\rangle \right) \, \mathrm{d}z^\delta.
    \end{equation}
    Now, the assertion follows from $\|\psi(z^\delta)\|^2 - \|\hat{\psi}(z^\delta)\|^2 - \langle 2\hat{\psi}(z^\delta), \, \psi(z^\delta) - \hat{\psi}(z^\delta) \rangle = \|\psi(z^\delta) - \hat{\psi}(z^\delta)\|^2$.
\end{proof}

Thus, in the constraint case, the function $\psi^*$, obtained by reconstruction training, aims to approximate the conditional mean estimator for the $p_Z$-weighted $L^2$-norm. In other words, reconstruction training with a constraint corresponds to a projection of the conditional mean estimator onto the constraint set with respect to the $p_Z$-weighted $L^2$-norm.

\begin{remark}
Since we know from Remark~\ref{rem:inverse_of_ires_is_ires} that the inverse network $\psi$ can be interpreted as a scaled iResNet, we can further compare the minimization problem to the case of approximation training. In the notation of an iResNet, the problem formulated in~\eqref{eq:min_distance_to_posterior} is equivalent to
    \begin{equation}
        \min_{g \in C(\R^n,\R^n), \ \mathrm{Lip}(g)\leq L} \int_X p_Z(z^\delta) \|(\Id-g)(z^\delta) - (1-L^2)\,\mathbb{E}(x\vert z^\delta) \|^2 \, \mathrm{d}z^\delta.
    \end{equation}
Thus, the reconstruction training is equivalent to training an iResNet with residual function $g$ ($\mathrm{Lip}(g)\leq L$) to fit a scaled version of the posterior expectation estimator. In contrast, approximation training aims to fit the same architecture type to the linear operator $A$. Overall, 
this indicates that theoretical and numerical properties (such as data-dependence) for the two strategies are the sole consequences of the training approach, and there is no additional bias due to the architecture choice of an iResNet as the forward mapping when assuming sufficient approximation capability.
\end{remark}

So far, the distribution of the noise $p_H$ was fixed. Now, we want to consider a variable noise level $\delta > 0$ by introducing the pdf $p_{H, \delta} \colon X \to \R_{\geq 0}$. We do not specify the exact relation of $p_{H, \delta}$ on $\delta$ but make the rather informal assumption that $\eta^\delta \sim p_{H, \delta}$ implies $\|\eta^\delta\| \sim \delta$ with high probability. So, $\delta \to 0$ corresponds to the case of vanishing noise. Analogously, let $p_{Z, \delta}$ be defined according to \eqref{eq:pdf_of_z} s.t.\ $z^\delta \sim p_{Z, \delta}$ holds for $z^\delta = A x + \eta^\delta$. The posterior mean now also depends on $\delta$ and may therefore be defined as
\begin{equation}
    \hat{\psi}_\delta (z) = \int_X \frac{p_{H,\delta}(z
- Ax)\, p_X(x)}{p_{Z,\delta}(z)} x \, \mathrm{d}x.
\end{equation}

Further, we want to specify the set $\Psi \subset L_{p_Z}^2 (X,X)$, which represents the inverses of possible iResNet architectures depending on $\delta$ and $L$. To encode the side constraint of \eqref{eq:constraint_reco_training}, we define
\begin{align}
    \Psi_L^\delta = \{ \psi \in L_{p_{Z, \delta}}^2 (X,X) \cap C( \mathrm{supp}( p_{Z, \delta}), X) \, &| \, \eqref{eq:inverse_lipschitz_constraint} \text{ holds } \forall z_1, z_2 \in \mathrm{supp}( p_{Z, \delta}) \}, \\
    (1-L^2) \|\psi(z_1) - \psi(z_2) \|^2 + \|z_1 - z_2\|^2 &\leq 2 \langle z_1 - z_2, \psi(z_1) - \psi(z_2)\rangle.
    \label{eq:inverse_lipschitz_constraint}
\end{align}
The set $\Psi_L^\delta$ is closed and convex in $ L_{p_{Z, \delta}}^2 (X,X)$ for any $L<1$ and $\delta>0$. Consequently, the minimization problem in Lemma \ref{lem:reco training equivalent formulation} is well-defined and admits a unique solution. This solution, w.r.t.\ $\Psi_L^\delta$, $p_{Z,\delta}$ and $\hat{\psi}_\delta$, is denoted by $\psi_{L, \delta}^*$.

The parameter $L$ controls the stability of the elements from $\Psi_L^\delta$ and can therefore be interpreted as a regularization parameter. The question remains whether we can also obtain a convergence result, i.e., $\psi^*_{L, \delta}(A x^\dagger + \eta^\delta) \to x^\dagger$ for $\|\eta^\delta\| \leq \delta$, $\delta \to 0$ (vanishing noise) and $L \to 1$, analogous to the convergence theorems of classical methods like Tikhonov's. In the following remark, we discuss difficulties and supporting facts regarding a convergence result.

\begin{remark}\label{rem:convergence reco}
There are different results in the literature for the posterior distribution to converge to one single point $x^\dagger$ (or its respective delta distribution) \cite[Theorem 1 and 2]{bochkina_2013}, \cite{vollmer_2013}. Thus, it might be realistic to expect that the conditional mean estimator $\hat{\psi}_\delta(Ax^\dagger)$ also converges to $x^\dagger$.
In classical convergence theorems \cite{engl1996regularization}, the ground truth $x^\dagger$ is mostly assumed to be a minimum-norm-solution of \eqref{eq:inverse_problem}. 
However, in a Bayesian setting with a learned reconstruction scheme, a criterion based on $p_X$ for characterizing $x^\dagger$ is more appropriate, e.g., 
\begin{equation}
    x^\dagger = \mathrm{arg} \max_{Ax = z} p_X(x) \qquad \text{for } z \in \mathcal{R}(A) 
\end{equation}
as in \cite{bochkina_2013} or the one provided in the subsequent Lemma~\ref{lem:solution_reco_train_example}.

Since $\psi_{L, \delta}^*$ is the projection of $\hat{\psi}_\delta$ onto the set $\Psi_L^\delta$ (Lemma \ref{lem:reco training equivalent formulation}), it is not unlikely that $\psi_{L, \delta}^*(Ax^\dagger)$ also converges to $x^\dagger$. However, the projection is w.r.t.\ the $L_{p_{z,\delta}}^2$-norm, and we are actually interested in pointwise convergence.
Besides, all functions $\psi$ from the sets $\Psi_L^\delta$ are Lipschitz continuous and they fulfill the monotonicity condition
\begin{equation} \label{eq:psi_monotonicity}
    \|z_1 - z_2\|^2 \leq 2 \langle z_1 - z_2, \psi(z_1) - \psi(z_2) \rangle.
\end{equation}
Thus, one cannot approximate arbitrary $L_{p_{z,\delta}}^2$-functions.

There are two facts that partly address this difficulty. First, the posterior mean $\hat{\psi}_\delta$ is also a Lipschitz continuous function under certain assumptions \cite[Theorem 4.5, Remark 4.6]{dashti_2017}. Second, 
a generalized inverse on $\mathcal{R}(A)$ of the form $A^\dagger \colon z \mapsto x^\dagger$ would indeed fulfill \eqref{eq:psi_monotonicity}.
This follows from $A$ being self-adjoint and positive semidefinite (we can write $A = \tilde{A}^* \tilde{A}$) and $\|A\|=1$ by
\begin{align}
    \|z_1 - z_2\|^2 = \|Ax_1^\dagger - A x_2^\dagger\|^2 &\leq \|\tilde{A}^*\|^2 \| \tilde{A} (x_1 - x_2) \|^2 = \langle \tilde{A} (x_1^\dagger - x_2^\dagger), \tilde{A} (x_1^\dagger - x_2^\dagger) \rangle \notag \\ 
    &\leq \langle A (x_1^\dagger - x_2^\dagger), x_1^\dagger - x_2^\dagger, \rangle = \langle z_1 - z_2, A^\dagger z_1 - A^\dagger z_2, \rangle.
\end{align}

Assuming that $\psi_{L, \delta}^*$ converges pointwise to $A^\dagger$ on $\mathcal{R}(A)$ would be sufficient to obtain a convergence result for noisy data as well. If $\|z^\delta - z\| \leq \delta$ holds and one chooses $L$ in a way that $L \to 1$ and $\frac{\delta}{1-L} \to 0$ for $\delta \to 0$, the desired convergence
\begin{equation}
    \begin{split}
    \| \psi_{L, \delta}^*(z^\delta) - A^\dagger(z) \| &\leq \| \psi_{L, \delta}^*(z^\delta) - \psi_{L, \delta}^*(z) \| + \| \psi_{L, \delta}^*(z) - A^\dagger(z)\| \\
    &\leq \frac{1}{1-L} \|z^\delta - z \| + \| \psi_{L, \delta}^*(z) - A^\dagger(z) \| \to 0
    \end{split}
\end{equation}
would follow, since $\mathrm{Lip}(\psi_{L, \delta}^*) \leq \frac{1}{1-L}$ \cite[Lemma 2.1]{iresnet_01_regtheory}.



\end{remark}

In the following, we provide a result for a potential candidate for $x^\dagger$ for the convergence analysis. 

\begin{lemma}\label{lem:solution_reco_train_example}
 Let Assumption \ref{ass:prior_and_noise} hold with $p_{H,\delta}=p_H$ indicating the dependence on $\delta$. In addition, let $\hat{\psi}_\delta: X \to X$, $z^\delta \mapsto \mathbb{E}(x|z^\delta)$ be the conditional mean estimator with 
 \begin{equation}
 p(x|z^\delta)=        \frac{p_{H,\delta}(z^\delta - Ax)\, p_X(x)}{p_{Z,\delta}(z^\delta)}
 \end{equation}
 and $    p_{Z,\delta}(z^\delta) = \int_X p_X(x)\, p_{H,\delta}(z^\delta - Ax) \, \mathrm{d}x$. We further make the following assumptions:
 \begin{itemize}
     \item[(i)] Noise on $\mathcal{R}(A)^\bot=\mathcal{N}(A)$ and $\mathcal{R}(A)=\mathcal{N}(A)^\bot$ (as $A=A^\ast$ and $X$ is finite-dimensional) is stochastically independent, i.e., there exist pdfs $p^{\dagger}_{H,\delta}: \mathcal{N}(A)^\bot\to \R_{\geq 0}$ and $p^{0}_{H,\delta}: \mathcal{N}(A) \to \R_{\geq 0}$ such that $p_{H,\delta}(\eta)=p^{0}_{H,\delta}(P_{\mathcal{N}(A)}\eta) \cdot p^{\dagger}_{H,\delta}(P_{\mathcal{N}(A)^\bot}\eta)$,
     \item[(ii)] $p^{0}_{H,\delta}$ and $p^{\dagger}_{H,\delta}$ define Dirac sequences with respect to $\delta \to 0$,
     \item[(iii)] $p_X$ is compactly supported and continuous. We define $\mathcal{R}_{p_X}(A):=\{ Ax\, |\, x \in \mathrm{supp}(p_X)\} \subset \mathcal{R}(A)  $,  
     \item[(iv)] For any $z\in \mathcal{R}_{p_X}$ there exists a $\bar{\delta}$ such that for all $\delta \in (0,\bar{\delta}]$ it holds $z\in \mathrm{supp}(p_{Z,\delta})$.
 \end{itemize}

We then have pointwise convergence of $\hat{\psi}_\delta$ for $z\in \mathcal{R}_{p_X}(A)$ such that it holds

\begin{equation}
   \hat{\psi}_\delta(z)  \underset{\delta \to 0}{ \longrightarrow} A^\dagger z + \int_{\mathcal{N}(A)} p(x_0|A^\dagger z)\,  x_0 \ \mathrm{d} x_0, \quad \text{with } \quad p(x_0|A^\dagger z)=\frac{p_X(x_0 + A^\dagger z)}{\int_{\mathcal{N}(A)} p_X(x'_0 + A^\dagger z)\ \mathrm{d} x'_0 }
\end{equation}
i.e., it converges to the minimum-norm solution $A^\dagger z$ plus the conditional expectation $\mathbb{E}(x_0|A^\dagger z) \in \mathcal{N}(A)$ in the nullspace. 
\end{lemma}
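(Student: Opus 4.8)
The plan is to evaluate the conditional mean $\hat{\psi}_\delta(z)$ explicitly in the orthogonal decomposition $X = \mathcal{N}(A)\oplus\mathcal{N}(A)^\bot$ and then pass to the limit $\delta\to 0$ via the Dirac-sequence property. Writing $x = x_0 + x_1$ with $x_0 = P_{\mathcal{N}(A)}x$ and $x_1 = P_{\mathcal{N}(A)^\bot}x$, I note that $Ax = Ax_1$ and that $A_1 := A|_{\mathcal{N}(A)^\bot}$ is a linear bijection of $\mathcal{N}(A)^\bot = \mathcal{R}(A)$ with inverse $A^\dagger$. The decisive first observation is that for $z\in\mathcal{R}(A)$ one has $z - Ax = z - Ax_1 \in \mathcal{N}(A)^\bot$, so its nullspace component vanishes identically; hence assumption~(i) yields $p_{H,\delta}(z-Ax) = p^{0}_{H,\delta}(0)\, p^{\dagger}_{H,\delta}(z - Ax_1)$, and the $x$-independent factor $p^{0}_{H,\delta}(0)$ cancels between the numerator and denominator of $\hat{\psi}_\delta$. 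This is precisely the mechanism by which the nullspace noise drops out of the estimator.

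Next I introduce the nullspace-marginals of the prior,
\[
\pi(x_1) = \int_{\mathcal{N}(A)} p_X(x_0 + x_1)\,\mathrm{d}x_0, \qquad m(x_1) = \int_{\mathcal{N}(A)} p_X(x_0 + x_1)\, x_0\,\mathrm{d}x_0,
\]
and, using Fubini together with $\mathrm{d}x = \mathrm{d}x_0\,\mathrm{d}x_1$ and the fact that $p^{\dagger}_{H,\delta}(z - Ax_1)$ does not depend on $x_0$, rewrite
\[
\hat{\psi}_\delta(z) = \frac{\int_{\mathcal{R}(A)} p^{\dagger}_{H,\delta}(z - Ax_1)\,\bigl(m(x_1) + x_1\,\pi(x_1)\bigr)\,\mathrm{d}x_1}{\int_{\mathcal{R}(A)} p^{\dagger}_{H,\delta}(z - Ax_1)\,\pi(x_1)\,\mathrm{d}x_1}.
\]
The affine substitution $w = z - A_1 x_1$, i.e.\ $x_1 = A^\dagger z - A^\dagger w$, has constant Jacobian $|\det A_1|^{-1}$, which cancels, and turns both integrals into Dirac-sequence integrals over $w\in\mathcal{N}(A)^\bot$ of the functions $w\mapsto \pi(A^\dagger z - A^\dagger w)$ and $w\mapsto m(A^\dagger z - A^\dagger w) + (A^\dagger z - A^\dagger w)\,\pi(A^\dagger z - A^\dagger w)$. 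By assumption~(iii) these functions are continuous with compact support, since uniform continuity and compact support of $p_X$ make the marginals $\pi$ and $m$ continuous and compactly supported.

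Applying the Dirac-sequence property (assumption~(ii)) to numerator and denominator separately then gives, as $\delta\to 0$,
\[
\hat{\psi}_\delta(z) \longrightarrow \frac{m(A^\dagger z) + A^\dagger z\,\pi(A^\dagger z)}{\pi(A^\dagger z)} = A^\dagger z + \frac{\int_{\mathcal{N}(A)} p_X(x_0 + A^\dagger z)\, x_0\,\mathrm{d}x_0}{\int_{\mathcal{N}(A)} p_X(x_0' + A^\dagger z)\,\mathrm{d}x_0'},
\]
and recognizing the conditional density $p(x_0\,|\,A^\dagger z)$ in the second term yields exactly the claimed limit.

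The main obstacle is the rigorous justification of exchanging the limit with integration and, above all, ensuring that the limiting denominator $\pi(A^\dagger z)$ is strictly positive so that the quotient is well-defined. For the latter I would use that $z\in\mathcal{R}_{p_X}(A)$ provides some $x = x_0 + x_1 \in \mathrm{supp}(p_X)$ with $Ax = z$, whence $x_1 = A^\dagger z$ and continuity of $p_X$ forces $\pi(A^\dagger z) = \int_{\mathcal{N}(A)} p_X(x_0 + A^\dagger z)\,\mathrm{d}x_0 > 0$; assumption~(iv) secures $z\in\mathrm{supp}(p_{Z,\delta})$ for small $\delta$, so that $\hat{\psi}_\delta(z)$ is itself well-defined along the sequence. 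The convergence of the Dirac-sequence integrals is then the standard fact that $\int p_\delta(w)\,g(w)\,\mathrm{d}w \to g(0)$ for continuous, compactly supported $g$, which applies to both the numerator and denominator integrands after the change of variables.
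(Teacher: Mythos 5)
Your proposal is correct and follows essentially the same route as the paper's own proof in Appendix~A: the decomposition $X = \mathcal{N}(A)\oplus\mathcal{N}(A)^\bot$, the cancellation of the nullspace noise factor $p^0_{H,\delta}$ via assumption~(i), the nullspace marginals (your $\pi$ and $m$ are the paper's $p_{X,\mathcal{N}(A)^\bot}$ and $g_0$), the affine change of variables reducing both integrals to Dirac-sequence convolutions evaluated at $A^\dagger z$, and the quotient-of-limits conclusion using (iv) for well-definedness. If anything, you are slightly more explicit than the paper about why the limiting denominator should be strictly positive, a point the paper passes over with ``standard sequence arguments.''
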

\begin{proof}
    The proof can be found in Appendix~\ref{appendix:proof_solution_type}.
\end{proof}

\subsection*{Reconstruction training for diagonal architecture}
In order to derive more specific results for the minimizer of \eqref{eq:reco_training_integral}, we make the assumption of stochastic independence of the components $x_j = \langle x, v_j \rangle \sim p_{X,j}$ and $\eta_j = \langle \eta, v_j \rangle \sim p_{H,j}$ similar to the setting in the last section on the approximation training:
\begin{assumption}
    Let $p_X(x)=\prod_j p_{X,j}(x_j)$ and $p_H(\eta)=\prod_j p_{H,j}(\eta_j)$.
\end{assumption}
Observe that the first and second moments of $p_{X,j}$ and $p_{H,j}$ exist due to Assumption~\ref{ass:prior_and_noise} with
\begin{equation}
    \mu_{X,j} = \int_\R p_{X,j}(x_j)\, x_j \, \mathrm{d}x_j \qquad \text{and} \qquad \mu_{H,j}=\int_\R p_{H,j}(\eta_j)\, \eta_j \, \mathrm{d}\eta_j = 0
\end{equation}
for all $j\in\N$. In addition, the density of $z_j = \sigma_j^2 x_j + \eta_j$ is given by 
\begin{equation}
    p_{Z,j}(z_j) = \int_\R p_{X,j}(x_j)\, p_{H,j}(z_j - \sigma_j^2 x_j) \, \mathrm{d}x_j
\end{equation}
with 
\begin{equation}
    \mu_{Z,j} = \int_\R p_{Z,j}(z_j)\, z_j \, \mathrm{d}z_j = \int_\R \int_\R z_j\,  p_{X,j}(x_j)\, p_{H,j}(z_j - \sigma_j^2 x_j) \, \mathrm{d}x_j \, \mathrm{d}z_j = \sigma_j^2\, \mu_{X,j}.
\end{equation}
Analogously to the approximation training, in this setting, it is sufficient to consider a diagonal structure of $\varphi$, which then implies the same structure for $\psi$, i.e.,\
\begin{equation}
    \psi(z) = \sum_j \psi_j(\langle z,v_j\rangle)v_j
\end{equation}
and the resulting optimization problem to train each $\psi_j$ now reads
\begin{equation} \label{eq:reco_training_integral_1_D}
     \min_{\psi_j \in \Psi_j} \int_\R \int_\R p_{X,j}(x_j)\, p_{H,j}(\eta_j)\, \|\psi_j(\sigma_j^2 x_j + \eta_j) - x_j\|^2 \, \mathrm{d}\eta_j \, \mathrm{d}x_j
\end{equation}
with a suitable set of functions $\Psi_j$. Recall that $\psi_j:\R\to\R$ represents the inverse of an iResNet 
if $\psi_j$ satisfies
\begin{equation}\label{eq:invertability condition 1d}
    \frac{1}{1+L} \leq  \frac{\psi_j(z_1) - \psi_j(z_2)}{z_1 - z_2} \leq  \frac{1}{1-L} \qquad \forall z_1, z_2 \in \R
\end{equation}
for some $0\leq L < 1$, c.f.\ Remark~\ref{rem:invertability_1_d}.
Therefore, we consider the set
\begin{equation} \label{Eq:Psi 1 D}
        \Psi_j = \left\{ \psi_j:\R \to \R \, \Big|\, \psi_j(z_j) = m z_j + b\text{ for } z_j\in\R \text{ with } m\in\R, \frac{1}{1+L}\leq m\leq \frac{1}{1-L}, b\in\R  \right\}
\end{equation}
for some $0\leq L < 1$. The following lemma provides a formula for the minimizer of problem~\eqref{eq:reco_training_integral_1_D}. For better readability, we leave out the index $j$ in the subsequent derivations.

\begin{lemma}\label{lem:reco training solution linear functions} The unique solution to the minimization problem~\eqref{eq:reco_training_integral_1_D} with $\Psi_j$ as in \eqref{Eq:Psi 1 D} is given by
    \begin{align}
        \psi^\ast(z) = m\,z + (1 - \sigma^2 m)\,\mu_X \quad \text{for } z\in\R
    \end{align}
    with
    \begin{align}
        m =   \begin{cases}
                        \frac{1}{1+L} &\text{if } \frac{\sigma^2\text{Var}_{p_X}(x)}{\sigma^4\text{Var}_{p_X}(x) + \text{Var}_{p_H}(\eta)} < \frac{1}{1+L},  \\
                        \frac{\sigma^2\text{Var}_{p_X}(x)}{\sigma^4\text{Var}_{p_X}(x) + \text{Var}_{p_H}(\eta)} &\text{if } \frac{1}{1+L}\leq \frac{\sigma^2\text{Var}_{p_X}(x)}{\sigma^4\text{Var}_{p_X}(x) + \text{Var}_{p_H}(\eta)} \leq \frac{1}{1-L},  \\
                        \frac{1}{1-L} &\text{if } \frac{\sigma^2\text{Var}_{p_X}(x)}{\sigma^4\text{Var}_{p_X}(x) + \text{Var}_{p_H}(\eta)} > \frac{1}{1-L}.
                    \end{cases}
    \end{align}
\end{lemma}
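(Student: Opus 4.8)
The plan is to treat this as a finite-dimensional constrained least-squares problem in the two scalar parameters $(m,b)$, exploiting the affine structure of $\Psi_j$ from \eqref{Eq:Psi 1 D}. Dropping the index $j$ as announced, I would substitute the ansatz $\psi(z)=mz+b$ into \eqref{eq:reco_training_integral_1_D} and, using $z=\sigma^2 x+\eta$, write the objective as
\begin{equation}
    J(m,b) = \mathbb{E}_{x\sim p_X}\,\mathbb{E}_{\eta\sim p_H}\big( ((\sigma^2 m-1)x + m\eta + b)^2 \big).
\end{equation}
This is finite for every admissible $(m,b)$ because $p_X$ and $p_H$ possess second moments (Assumption~\ref{ass:prior_and_noise}). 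Using the stochastic independence of $x$ and $\eta$ together with $\mathbb{E}_{p_H}(\eta)=0$, all cross terms involving $\eta$ drop out, and after writing $\mathbb{E}(x^2)=\mathrm{Var}_{p_X}(x)+\mu_X^2$ and $\mathbb{E}(\eta^2)=\mathrm{Var}_{p_H}(\eta)$ I would collect terms into the closed form
\begin{equation}
    J(m,b) = (\sigma^2 m-1)^2\,\mathrm{Var}_{p_X}(x) + m^2\,\mathrm{Var}_{p_H}(\eta) + \big((\sigma^2 m-1)\mu_X + b\big)^2 .
\end{equation}

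The key structural observation is that $b$ enters only through the last, nonnegative square, so the minimization decouples. For any fixed $m$ the inner minimization over the unconstrained $b\in\R$ is solved by forcing that square to vanish, i.e.\ $b=(1-\sigma^2 m)\mu_X$, which already reproduces the stated form $\psi^\ast(z)=mz+(1-\sigma^2 m)\mu_X$. Substituting this optimal $b(m)$ back reduces the problem to the one-dimensional minimization of
\begin{equation}
    \tilde J(m) = (\sigma^2 m-1)^2\,\mathrm{Var}_{p_X}(x) + m^2\,\mathrm{Var}_{p_H}(\eta)
\end{equation}
over the interval $m\in[\tfrac{1}{1+L},\tfrac{1}{1-L}]$. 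Since $\tilde J$ is a quadratic whose leading coefficient $\sigma^4\,\mathrm{Var}_{p_X}(x)+\mathrm{Var}_{p_H}(\eta)$ is strictly positive, it is strictly convex.

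It then remains to minimize a strictly convex parabola over a closed interval. I would set $\tilde J'(m)=0$ to obtain the unconstrained minimizer $m^\ast=\frac{\sigma^2\,\mathrm{Var}_{p_X}(x)}{\sigma^4\,\mathrm{Var}_{p_X}(x)+\mathrm{Var}_{p_H}(\eta)}$, and then invoke the elementary fact that the constrained minimizer of a strictly convex function over an interval is the projection of the unconstrained minimizer onto that interval. This yields exactly the three-case clamping in the statement: $m=\tfrac{1}{1+L}$ if $m^\ast<\tfrac{1}{1+L}$, $m=m^\ast$ if $m^\ast$ lies in the interval, and $m=\tfrac{1}{1-L}$ if $m^\ast>\tfrac{1}{1-L}$. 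Uniqueness follows because strict convexity makes the constrained $m$ unique and $b(m)$ is then uniquely determined.

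I do not anticipate a genuine obstacle here, since the argument is a routine constrained quadratic optimization; the only points requiring care are the bookkeeping in the expansion of $J$ and the standing nondegeneracy $\sigma^4\,\mathrm{Var}_{p_X}(x)+\mathrm{Var}_{p_H}(\eta)>0$, which underlies strict convexity and hence uniqueness. If this quantity could vanish (degenerate prior \emph{and} noise), $m$ would be unconstrained by the data and uniqueness would fail, so I would note that the presence of a genuine positive-variance noise component rules this out.
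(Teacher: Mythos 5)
Your proof is correct, and it reaches the stated minimizer by a route that is cleaner than the paper's. The paper sets up the full Lagrangian $K(m,b,\lambda_1,\lambda_2)$ with one multiplier per interval endpoint, argues existence via convexity and coercivity, and then works through the KKT stationarity equations with a case analysis on $(\lambda_1,\lambda_2)$ --- including verifying that $\lambda_1>0$ forces $m=\tfrac{1}{1-L}$ together with the corresponding sign condition on the unconstrained slope, and ruling out $\lambda_1,\lambda_2>0$ simultaneously. You instead complete the square to write the objective as $(\sigma^2m-1)^2\,\mathrm{Var}_{p_X}(x)+m^2\,\mathrm{Var}_{p_H}(\eta)+\bigl((\sigma^2m-1)\mu_X+b\bigr)^2$, observe that $b$ enters only through the last nonnegative square (so $b=(1-\sigma^2m)\mu_X$ decouples exactly), and then minimize a strictly convex parabola over the closed interval $[\tfrac{1}{1+L},\tfrac{1}{1-L}]$ by projecting the unconstrained vertex onto it. This buys you three things: existence and uniqueness come for free from strict convexity on a compact interval rather than needing a separate coercivity argument; the three-case clamping is immediate rather than reconstructed from multiplier signs; and the computation that in the paper is spread over equations \eqref{eq:mForLambdaZero}--\eqref{eq:bForLambdaZero} collapses into one algebraic identity. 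The two arguments are of course the same convex program solved twice --- your projection step is precisely what the KKT cases encode --- so nothing is gained in generality, only in transparency. One small remark on your closing caveat: under Assumption~\ref{ass:prior_and_noise} the nondegeneracy $\sigma^4\,\mathrm{Var}_{p_X}(x)+\mathrm{Var}_{p_H}(\eta)>0$ is automatic, since $p_X$ is a Lebesgue density (hence $\mathrm{Var}_{p_X}(x)>0$, as a zero-variance law is a point mass and admits no pdf) and $\sigma^2>0$ for the eigenvalues considered; so the degenerate case you hedge against cannot occur in the paper's setting, and you need not even invoke positive noise variance.
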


\begin{proof}
We solve the minimization problem~\eqref{eq:reco_training_integral} by applying the KKT conditions. To this end, consider the Lagrange functional 
\begin{equation}
    K(m,b,\lambda_1, \lambda_2) = \int_\R \int_\R p_X(x)\, p_H(\eta)\, ( m(\sigma^2 x + \eta) + b - x)^2 \, \mathrm{d}\eta \, \mathrm{d}x +  \lambda_1\left(m-\frac{1}{1-L}\right) + \lambda_2\left(\frac{1}{1+L}-m\right)
\end{equation}
for $m,b,\lambda_1, \lambda_2\in\R$. Observe that the integral is well-defined as the first and second moments of $p_X$ and $p_H$ exist by assumption. Moreover, $K$ is convex and coercive w.r.t.\ $(m,b)$ and the integral is continuous w.r.t.\ $(m,b)$. Consequently, there exists a minimizer of problem~\eqref{eq:reco_training_integral} which satisfies the necessary KKT conditions
\begin{align} \label{eq:dLdmReco}
    \frac{\partial K}{\partial m} (m,b,\lambda_1,\lambda_2) = \int_\R \int_\R 2\,p_X(x)\, p_H(\eta)\, (\sigma^2 x + \eta)\,( m(\sigma^2 x + \eta) + b - x) \, \mathrm{d}\eta \, \mathrm{d}x + \lambda_1 m - \lambda_2 m \stackrel{!}{=} 0, \\
    \label{eq:dLdbReco}
    \frac{\partial K}{\partial b} (m,b,\lambda_1, \lambda_2) = \int_\R \int_\R 2\,p_X(x)\, p_H(\eta) \,( m(\sigma^2 x + \eta) + b - x) \, \mathrm{d}\eta \, \mathrm{d}x  \stackrel{!}{=} 0,\\
    \label{eq:lagrange_constraint_1}
    \lambda_1\left(m-\frac{1}{1-L}\right) \stackrel{!}{=} 0, \\
    \label{eq:lagrange_constraint_2}
    \lambda_2\left(\frac{1}{1+L}-m\right) \stackrel{!}{=} 0, \\
    \lambda_1, \lambda_2 \geq  0.
\end{align}
\textit{Case $\lambda_1 =\lambda_2 = 0$}: Equation~\eqref{eq:dLdmReco} implies
\begin{align}\label{eq:mForLambdaZero}
    \int_\R \int_\R &p_X(x)\, p_H(\eta)\, (\sigma^2 x + \eta)\,( m(\sigma^2 x + \eta) + b - x) \, \mathrm{d}\eta \, \mathrm{d}x \nonumber \\ 
    &=\int_\R \int_\R p_X(x)\, p_H(\eta)\, ( m\sigma^4 x^2 + 2 m \sigma^2 x \eta + m \eta^2 + b \sigma^2 x + b\eta-\sigma^2x^2-\eta x) \, \mathrm{d}\eta \, \mathrm{d}x \nonumber \\ 
    &= m\sigma^4 \mathbb{E}_{p_X}(x^2) + 2m\sigma^2 \mu_X \mu_H + m \mathbb{E}_{p_H}(\eta^2) + b \sigma^2 \mu_X + b \mu_H - \sigma^2\mathbb{E}_{p_X}(x^2) - \mu_H\mu_X \nonumber  \\
    &= m\sigma^4 \mathbb{E}_{p_X}(x^2) + m \mathbb{E}_{p_H}(\eta^2) + b \sigma^2 \mu_X - \sigma^2\mathbb{E}_{p_X}(x^2) \nonumber \\
    &\stackrel{!}{=} 0
\end{align}
and Equation~\eqref{eq:dLdbReco} gives
\begin{align}
    \int_\R \int_\R 2\,p_X(x) p_H(\eta) \,( m(\sigma^2 x + \eta) + b - x) \, \mathrm{d}\eta \, \mathrm{d}x = m\sigma^2 \mu_X + m \mu_H + b - \mu_X \stackrel{!}{=} 0
\end{align}
resulting in 
\begin{align} \label{eq:bForLambdaZero}
    b = \mu_X - m\sigma^2\mu_X.
\end{align}
Inserting the last equation into Equation~\eqref{eq:mForLambdaZero} yields
\begin{align}
    &m\sigma^4 \mathbb{E}_{p_X}(x^2) + m \mathbb{E}_{p_H}(\eta^2) +  \sigma^2 \mu_X^2 - m\sigma^4\mu_X^2 - \sigma^2\mathbb{E}_{p_X}(x^2) \stackrel{!}{=} 0 \notag \\
    \Leftrightarrow \quad &m\sigma^4\text{Var}_{p_X}(x) + m \text{Var}_{p_H}(\eta)-\sigma^2\text{Var}_{p_X}(x) = 0 \notag \\
    \Leftrightarrow \quad & m = \frac{\sigma^2\text{Var}_{p_X}(x)}{\sigma^4\text{Var}_{p_X}(x) + \text{Var}_{p_H}(\eta)}
\end{align}
and 
\begin{align}
    b = \left( 1 - \frac{\sigma^4\text{Var}_{p_X}(x)}{\sigma^4\text{Var}_{p_X}(x) + \text{Var}_{p_H}(\eta)} \right) \mu_X.
\end{align}
The formulas for $m$ and $b$ correspond to the unconstrained solution of problem~\eqref{eq:reco_training_integral}. In order to satisfy the constraint on $m$, we need to guarantee that
\begin{equation}
    \frac{1}{1+L}\leq \frac{\sigma^2\text{Var}_{p_X}(x)}{\sigma^4\text{Var}_{p_X}(x) + \text{Var}_{p_H}(\eta)} \leq \frac{1}{1-L}.
\end{equation}
If this is not satisfied, we must require $\lambda_1 > 0$ or $\lambda_2 > 0$, which we will deal with in the following paragraphs.\\
\textit{Case $\lambda_1 > 0,\, \lambda_2 = 0$}:
If $\lambda_1>0$, Equation~\eqref{eq:lagrange_constraint_1} directly yields
\begin{equation}
    m = \frac{1}{1-L}.
\end{equation}
For $b$ we again obtain
\begin{equation}
    b = \mu_X - m\sigma^2\mu_X = \left(1 - \frac{\sigma^2}{1-L}\right)\mu_X
\end{equation}
as Equation~\eqref{eq:dLdbReco} is independent of $\lambda_1$ and $\lambda_2$. Furthermore, Equation~\eqref{eq:dLdmReco} implies
\begin{align}
    &\int_\R \int_\R p_X(x)\, p_H(\eta)\, (\sigma^2 x + \eta)\,( m(\sigma^2 x + \eta) + b - x) \, \mathrm{d}\eta \, \mathrm{d}x + \lambda_1 m \stackrel{!}{=} 0 \notag \\
    \Leftrightarrow \quad & \lambda_1 = 2\,\frac{\sigma^2\text{Var}_{p_X}(x)-m\sigma^4\text{Var}_{p_X}(x) - m \text{Var}_{p_H}(\eta)}{m}.
\end{align}
In combination with the condition $\lambda_1>0$ we now obtain
\begin{align}
    \frac{\sigma^2\text{Var}_{p_X}(x)}{\sigma^4\text{Var}_{p_X}(x) + \text{Var}_{p_H}(\eta)} > \frac{1}{1-L}.
\end{align}
\textit{Case $\lambda_1 = 0,\, \lambda_2 > 0$}: The same line of reasoning as in the preceding case yields
\begin{align}
    &m = \frac{1}{1+L}\\
    &b = \mu_X - m\sigma^2\mu_X = \left(1 - \frac{\sigma^2}{1+L}\right)\mu_X \\
    &\frac{\sigma^2\text{Var}_{p_X}(x)}{\sigma^4\text{Var}_{p_X}(x) + \text{Var}_{p_H}(\eta)} < \frac{1}{1+L}.
\end{align}
Observe that the case $\lambda_1>0$ and $\lambda_2>0$ is not possible as there is no $m$ satisfying Equations~\eqref{eq:lagrange_constraint_1} and \eqref{eq:lagrange_constraint_2} simultaneously for $0<L<1$. The uniqueness of the solution directly follows from the uniqueness of $m$ and $b$, which concludes the proof.
\end{proof}

Note that the inverse of the function $\psi^\ast:\R\to\R$ is given by $\varphi^\ast(x) = x-f^\ast(x)$ with
\begin{align}
    f^\ast(x) =   \begin{cases}
                    -L x + \left(1 +L - \sigma^2\right)\mu_X &\text{if } \frac{\sigma^2\text{Var}_{p_X}(x)}{\sigma^4\text{Var}_{p_X}(x) + \text{Var}_{p_H}(\eta)} < \frac{1}{1+L}, \\
                    \left(1-\frac{\sigma^4\text{Var}_{p_X}(x) + \text{Var}_{p_H}(\eta)}{\sigma^2\text{Var}_{p_X}(x)}\right) x +  \frac{\text{Var}_{p_H}(\eta)}{\sigma^2\text{Var}_{p_X}(x)}  \, \mu_X &\text{if } \frac{1}{1+L}\leq \frac{\sigma^2\text{Var}_{p_X}(x)}{\sigma^4\text{Var}_{p_X}(x) + \text{Var}_{p_H}(\eta)} \leq \frac{1}{1-L}, \\
                    L x + \left(1 - L - \sigma^2\right)\mu_X &\text{if } \frac{\sigma^2\text{Var}_{p_X}(x)}{\sigma^4\text{Var}_{p_X}(x) + \text{Var}_{p_H}(\eta)} > \frac{1}{1-L}
                \end{cases}
\end{align}
for $x\in\R$. In the case of noiseless data, i.e.\ $\text{Var}_{p_H}(\eta)=0$, $f^\ast$ corresponds to the function $f^*$ derived in Lemma~\ref{lem:approx_training_linear} and Theorem~\ref{thm:approx_training_lipschitz}. 

The function $\psi^\ast$ plays an important role in the case of Gaussian prior and noise distributions, which we will deal with in the following corollary.

\begin{corollary}\label{cor:1D Reco Gaussian Distributions}
    Assume that $p_X: \R\to\R$ and $p_H:\R\to\R$ are Gaussian probability density functions. Then, the function $\psi^\ast$ of Lemma~\ref{lem:reco training solution linear functions} is a solution to the minimization problem~\eqref{eq:reco_training_integral_1_D} with 
    \begin{equation}
        \Psi = \left\{\psi \in C^1(\R)\cap L_{p_Z}^2(\R) \, \bigg| \,  \frac{1}{1+L}\leq \psi'(z)\leq \frac{1}{1-L} \text{ for all } z\in\R\right\}.
    \end{equation}
\end{corollary}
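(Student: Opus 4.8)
The plan is to reduce the problem to the projection formulation of Lemma~\ref{lem:reco training equivalent formulation} and then exploit that, for Gaussian data, the conditional mean estimator is itself affine. By the one-dimensional instance of Lemma~\ref{lem:reco training equivalent formulation}, minimizing \eqref{eq:reco_training_integral_1_D} over $\Psi$ is equivalent to minimizing
\[
J(\psi) = \int_\R p_Z(z)\, |\psi(z) - \hat\psi(z)|^2 \, \d z
\]
over $\psi \in \Psi$, where $\hat\psi(z) = \mathbb{E}(x \mid z)$. First I would compute $\hat\psi$ explicitly: since $x \sim p_X$ and $\eta \sim p_H$ are independent Gaussians, the pair $(x,z)$ with $z = \sigma^2 x + \eta$ is jointly Gaussian, so the posterior mean is the affine function
\[
\hat\psi(z) = m_0\, z + (1 - \sigma^2 m_0)\,\mu_X, \qquad m_0 = \frac{\sigma^2 \mathrm{Var}_{p_X}(x)}{\sigma^4 \mathrm{Var}_{p_X}(x) + \mathrm{Var}_{p_H}(\eta)},
\]
with $\mu_Z = \sigma^2 \mu_X$. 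This slope $m_0$ is exactly the \emph{unconstrained} value from Lemma~\ref{lem:reco training solution linear functions}, so $\psi^*$ arises from $\hat\psi$ by clamping $m_0$ into $[\tfrac{1}{1+L}, \tfrac{1}{1-L}]$ while keeping the intercept tied through $b = (1 - \sigma^2 m)\mu_X$.

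If $m_0 \in [\tfrac{1}{1+L}, \tfrac{1}{1-L}]$, then $\hat\psi \in \Psi$ (it is $C^1$, affine hence in $L_{p_Z}^2$, and meets the slope bound), so $\psi^* = \hat\psi$ and $J(\psi^*)=0$ is trivially minimal. The interesting case is $m_0$ outside the admissible interval; by symmetry I treat $m_0 > \tfrac{1}{1-L}$, so $m = \tfrac{1}{1-L}$. Using $b - b_0 = \sigma^2(m_0 - m)\mu_X$ yields the key identity
\[
\psi^*(z) - \hat\psi(z) = (m - m_0)(z - \mu_Z).
\]
For arbitrary $\psi \in \Psi$ I would then expand
\[
J(\psi) - J(\psi^*) = \|\psi - \psi^*\|_{p_Z,2}^2 + 2\int_\R p_Z(z)\,\bigl(\psi^*(z) - \hat\psi(z)\bigr)\bigl(\psi(z) - \psi^*(z)\bigr)\, \d z,
\]
reducing the claim $J(\psi)\ge J(\psi^*)$ to showing the cross term is nonnegative.

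Substituting the identity, the cross term equals $2(m - m_0)\int_\R p_Z(z)(z - \mu_Z)\bigl(\psi(z) - \psi^*(z)\bigr)\,\d z$, i.e.\ $2(m-m_0)\,\mathrm{Cov}_{p_Z}(z, g)$ with $g = \psi - \psi^*$ (using $\int_\R p_Z(z)(z-\mu_Z)\,\d z = 0$). Here $g' = \psi' - m \le 0$ because $\psi' \le \tfrac{1}{1-L} = m$, so $g$ is non-increasing; the standard monotone correlation inequality, obtained by integrating $(z - z')(g(z) - g(z')) \le 0$ against $p_Z(z)p_Z(z')$ for an independent copy $z'$, gives $\mathrm{Cov}_{p_Z}(z, g) \le 0$, and since $m - m_0 < 0$ the cross term is $\ge 0$. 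The case $m_0 < \tfrac{1}{1+L}$ is symmetric with $g$ non-decreasing and $m - m_0 > 0$. Hence $J(\psi) \ge J(\psi^*)$ for all $\psi \in \Psi$.

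The main obstacle is the cross-term sign: the monotone correlation inequality together with the bookkeeping that ties the sign of $m - m_0$ to the monotonicity direction of $g$. One must also confirm convergence of all integrals, which follows from $\psi, \psi^*, \hat\psi \in L_{p_Z}^2(\R)$ and the finiteness of the Gaussian moments via Cauchy--Schwarz (so that $z\mapsto z-\mu_Z$ and $g$ are both square-integrable against $p_Z$); everything else is routine.
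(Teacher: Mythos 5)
Your proposal is correct, but its key optimality step differs from the paper's. Both arguments start identically: reduce via Lemma~\ref{lem:reco training equivalent formulation} to minimizing $\int p_Z\,|\psi-\hat\psi|^2$ and compute that $\hat\psi(z)=\mathbb{E}(x\mid z)$ is affine with slope $m_0=\frac{\sigma^2\mathrm{Var}_{p_X}(x)}{\sigma^4\mathrm{Var}_{p_X}(x)+\mathrm{Var}_{p_H}(\eta)}$. From there the paper simply invokes ``the same reasoning as in the proof of Theorem~\ref{thm:approx_training_lipschitz}'': any admissible $\psi$, having slope strictly on one side of $m_0$, crosses the affine target $\hat\psi$ at a unique point $z_0$; the extremal-slope affine function through $(z_0,\hat\psi(z_0))$ dominates $\psi$ \emph{pointwise} in the integrand, and the constrained affine case is then settled by Lemma~\ref{lem:reco training solution linear functions}. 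You instead verify the variational inequality characterizing the projection onto the convex set $\Psi$: using the structural identity $\psi^*-\hat\psi=(m-m_0)(z-\mu_Z)$ (which your intercept bookkeeping $b=(1-\sigma^2 m)\mu_X$ correctly delivers), the cross term becomes $2(m-m_0)\,\mathrm{Cov}_{p_Z}(z,\psi-\psi^*)$, whose sign you control by Chebyshev's monotone correlation inequality, since $\psi'-m$ has a fixed sign under the clamping. Your sign bookkeeping in both clamped cases is right, the integrability remarks suffice, and the trivial middle case ($\hat\psi\in\Psi$) is handled. What your route buys: it yields the quantitative estimate $J(\psi)\geq J(\psi^*)+\|\psi-\psi^*\|_{p_Z,2}^2$, hence uniqueness of the minimizer in $L_{p_Z}^2$ (stronger than the corollary's bare existence claim), and it avoids the crossing-point construction and pointwise case analysis. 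What the paper's route buys: it reuses machinery already proved, needs no correlation inequality, and the pointwise-domination argument does not rely on the specific fact that $\psi^*-\hat\psi$ is proportional to $z-\mu_Z$; on the other hand it needs $C^1$ only implicitly, whereas your argument uses $\psi'\leq m$ (resp.\ $\psi'\geq m$) globally, which the corollary's $\Psi$ does provide.
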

\begin{proof}
    In Lemma~\ref{lem:unconstrained_reco_training} we have seen that the unconstrained solution of problem~\eqref{eq:reco_training_integral_1_D} is given by $\hat{\psi}(z) = \mathbb{E}(x|z)$ for all $z\in\R$. In the case of Gaussian noise and prior distributions, $\mathbb{E}(x|z)$ can be expressed as
    \begin{equation}
        \mathbb{E}(x|z) = \frac{\sigma^2\text{Var}_{p_X}(x)}{\sigma^4\text{Var}_{p_X}(x) + \text{Var}_{p_H}(\eta)} z + \left( 1 - \frac{\sigma^4\text{Var}_{p_X}(x)}{\sigma^4\text{Var}_{p_X}(x) + \text{Var}_{p_H}(\eta)} \right) \mu_X
    \end{equation}
    for all $z\in\R$ \cite[Theorem 6.20 and Eq.(2.16a)]{stuart_2010}, which is an element of $C^1(\R)\cap L_{p_Z}^2(\R)$. In combination with Lemma~\eqref{lem:reco training equivalent formulation}, minimization problem~\eqref{eq:reco_training_integral} can be rewritten as
    \begin{equation}
        \min_{\psi \in \Psi} \int_\R p_Z(z) \left( \psi(z) - \frac{\sigma^2\text{Var}_{p_X}(x)}{\sigma^4\text{Var}_{p_X}(x) + \text{Var}_{p_H}(\eta)} z - \left( 1 - \frac{\sigma^4\text{Var}_{p_X}(x)}{\sigma^4\text{Var}_{p_X}(x) + \text{Var}_{p_H}(\eta)} \right)\mu_X \right)^2 \, \mathrm{d}z.
    \end{equation}
    The same reasoning as in the proof of Theorem~\ref{thm:approx_training_lipschitz} now shows that $\psi^\ast$ of Lemma~\ref{lem:reco training solution linear functions} is a solution to the minimization problem.
\end{proof}

Figure~\ref{fig:1d Reco Gaussian Distribution} illustrates the behavior of the unconstrained solution $\hat{\psi}$ and the constrained solution $\psi^\ast$ in case of Gaussian probability density functions for varying noise and small singular values ($1-\sigma^2 > L$). Note that both solutions can be rewritten to depend on $\mu_Z$ instead of on $\mu_X$ using $\mu_Z = \sigma^2 \mu_X$. It can be observed that the noise level affects the slope of the unconstrained solution, with decreasing values at higher noise levels. Thus, $\hat{\psi}$ violates the invertibility condition \eqref{eq:invertability condition 1d} for very small and very large values of $\text{Var}_{p_H}(\eta)$ leading to $\psi^\ast \neq \hat{\psi}$ in these cases (left and right image of Figure~\ref{fig:1d Reco Gaussian Distribution}). 

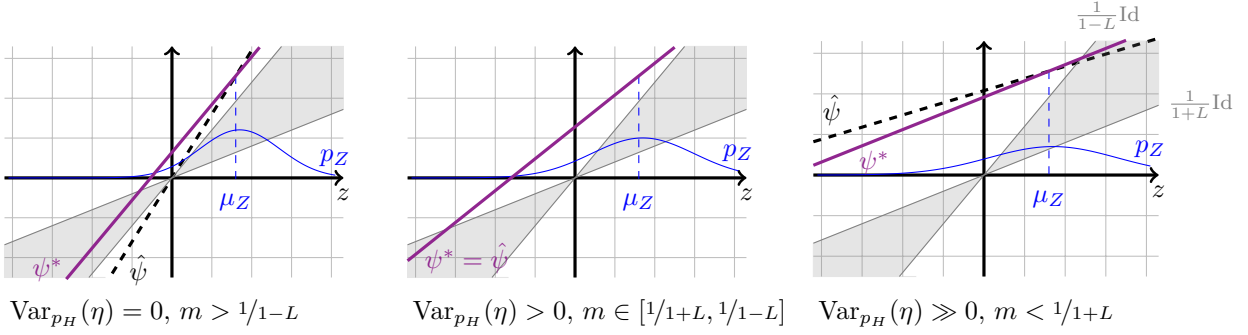
\begin{figure}
 \begin{minipage}[c]{0.33\textwidth}
\begin{center}
\begin{subfigure}[b]{\textwidth}
\begin{tikzpicture}[scale=0.53]
  \draw[step=1.0, lightgray, thin] (-4.2,-2.5) grid (4.3,3.3);
  \draw[->, thick, very thick] (-4.2, 0) -- (4.3, 0) node[below] {$z$};
  \draw[->, thick, very thick] (0, -2.5) -- (0, 3.3);
  
  \draw[scale=1, domain=-1.5:2.05, variable=\t, black, very thick, dashed] plot[samples=161] ({\t}, {(1.6)*\t});
  \node[black, right] at (-1.3,-2.3) {$\hat{\psi}$};
    
    \filldraw[gray, opacity=0.2] (-2.083,-2.5) -- (2.75,3.3) -- (4.3,3.3) -- (4.3,1.72) -- (-4.2,-1.68) -- (-4.2,-2.5) -- (-1.667,-2.5); 
    \draw[scale=1, domain=-2.083:2.75, variable=\t, gray] plot[samples=161] ({\t}, {(1.2)*\t});
   \phantom{\node[gray, right] at (2.5,3.8) {$\frac{1}{1-L}\Id$};} 
   \draw[scale=1, domain=-4.2:4.3, variable=\t, gray] plot[samples=161] ({\t}, {(0.4)*\t});
   
   \draw[scale=1, domain=-4.1:4.1, variable=\t, blue] plot[samples=161] ({\t}, {1.2*exp(-0.5*(\t-1.6)*(\t-1.8))}) node[above] {$p_{Z}$};
   \draw[, blue, dashed] (1.6,2.56) -- (1.6,-0.16) node[below] {$\mu_Z$};
   
   \draw[scale=1, domain=-2.65:2.2, variable=\t, Plum, very thick] plot[samples=161] ({\t}, {(1.2)*\t+(1.6-1.2)*1.6}); 
   \node[Plum, left] at (-2.5,-2.3) {$\psi^\ast$};

\node[black,right] at (-4.3,-3.4) {$\text{Var}_{p_H}(\eta) = 0$, $m>\nicefrac{1}{1-L}$};
\end{tikzpicture}
\end{subfigure}
\end{center}
\end{minipage}
\hspace{-7pt}
\begin{minipage}[c]{0.33\textwidth}
\begin{center}
\begin{subfigure}[b]{\textwidth}
\begin{tikzpicture}[scale=0.53]
  \draw[step=1.0, lightgray, thin] (-4.2,-2.5) grid (4.3,3.3);
  \draw[->, thick, very thick] (-4.2, 0) -- (4.3, 0) node[below] {$z$};
  \draw[->, thick, very thick] (0, -2.5) -- (0, 3.3);
  
    
    \filldraw[gray, opacity=0.2] (-2.083,-2.5) -- (2.75,3.3) -- (4.3,3.3) -- (4.3,1.72) -- (-4.2,-1.68) -- (-4.2,-2.5) -- (-1.667,-2.5); 
    \draw[scale=1, domain=-2.083:2.75, variable=\t, gray] plot[samples=161] ({\t}, {(1.2)*\t});
   \phantom{\node[gray, right] at (2.5,3.8) {$\frac{1}{1-L}\Id$}; }
   \draw[scale=1, domain=-4.2:4.3, variable=\t, gray] plot[samples=161] ({\t}, {(0.4)*\t});
   
   \draw[scale=1, domain=-4.1:4.1, variable=\t, blue] plot[samples=161] ({\t}, {1*exp(-0.3*(\t-1.6)*(\t-1.8))}) node[above] {$p_{Z}$};
   \draw[, blue, dashed] (1.6,2.56) -- (1.6,-0.16) node[below] {$\mu_Z$};
   
   \draw[scale=1, domain=-4.2:2.5, variable=\t, Plum, very thick] plot[samples=161] ({\t}, {(0.8)*\t+(1.6-0.8)*1.6}); 
   \node[Plum, right] at (-4,-2) {$\psi^\ast = \hat{\psi}$};

\node[black,right] at (-4.3,-3.4) {$ \text{Var}_{p_H}(\eta) > 0$, $m\in [\nicefrac{1}{1+L},\nicefrac{1}{1-L}]$};
\end{tikzpicture}
\end{subfigure}
\end{center}
\end{minipage}
\hspace{-6pt}
\begin{minipage}[c]{0.4\textwidth}
\vspace{-3pt}
\begin{center}
\begin{subfigure}[b]{\textwidth}
\begin{tikzpicture}[scale=0.54]
  \draw[step=1.0, lightgray, thin] (-4.2,-2.5) grid (4.3,3.3);
  \draw[->, thick, very thick] (-4.2, 0) -- (4.3, 0) node[below] {$z$};
  \draw[->, thick, very thick] (0, -2.5) -- (0, 3.3);
  
  \draw[scale=1, domain=-4.2:4.3, variable=\t, black, very thick, dashed] plot[samples=161] ({\t}, {(0.3)*\t+(1.6-0.3)*1.6});
  \node[black, right] at (-4.2,1.6) {$\hat{\psi}$};
    
    \filldraw[gray, opacity=0.2] (-2.083,-2.5) -- (2.75,3.3) -- (4.3,3.3) -- (4.3,1.72) -- (-4.2,-1.68) -- (-4.2,-2.5) -- (-1.667,-2.5); 
    \draw[scale=1, domain=-2.083:2.75, variable=\t, gray] plot[samples=161] ({\t}, {(1.2)*\t});
   \node[gray, right] at (2,3.9) {\footnotesize$\frac{1}{1-L}\Id$}; 
   \draw[scale=1, domain=-4.2:4.3, variable=\t, gray] plot[samples=161] ({\t}, {(0.4)*\t});
   \node[gray, right] at (4.3,1.72) {\footnotesize$\frac{1}{1+L}\Id$};
   
   \draw[scale=1, domain=-4.1:4.1, variable=\t, blue] plot[samples=161] ({\t}, {0.7*exp(-0.2*(\t-1.6)*(\t-1.8))}) node[above] {$p_{Z}$};
   \draw[, blue, dashed] (1.6,2.56) -- (1.6,-0.16) node[below] {$\mu_Z$};
   
   \draw[scale=1, domain=-4.2:3.5, variable=\t, Plum, very thick] plot[samples=161] ({\t}, {(0.4)*\t+(1.6-0.4)*1.6}); 
   \node[Plum, right] at (-3.3,0.3) {$\psi^\ast$};

\node[black,right] at (-4.3,-3.4) { $ \text{Var}_{p_H}(\eta) \gg 0$, $m<\nicefrac{1}{1+L}$};
\end{tikzpicture}
\end{subfigure}
\end{center}
\end{minipage}
\caption{Illustration of the constrained solution $\psi^\ast$ and the unconstrained solution $\hat{\psi}$ in the case of Gaussian probability density functions $p_X$ and $p_H$, cf.\ Corollary~\ref{cor:1D Reco Gaussian Distributions}. The slope of the unconstrained solution $\hat{\psi}$ is denoted by $m$, i.e.\ $m = \frac{\sigma^2\text{Var}_{p_X}(x)}{\sigma^4\text{Var}_{p_X}(x) + \text{Var}_{p_H}(\eta)}$. The plots exemplify the behavior of $\psi^\ast$ and $\hat{\psi}$ for small singular values ($1-\sigma^2 > L$) assuming a fixed prior distribution $p_X$ but increasing variance of $p_H$. In the case that $\text{Var}_{p_H}(\eta) = 0$ (left), the slope of the unconstrained solution exceeds $\frac{1}{1-L}$. If the noise increases, the slope of the unconstrained solution decreases, resulting in $m\in [\frac{1}{1+L},\frac{1}{1-L}]$ (middle). For very noisy data, the slope of the unconstrained solution is smaller than $\frac{1}{1+L}$ (right), again resulting in $\psi^\ast\neq \hat{\psi}$. Observe that $\psi^\ast$ and $\hat{\psi}$ are equal to $\frac{1}{\sigma^2}\mu_Z = \mu_X$ for $z = \mu_Z = \sigma^2\mu_X$ in all cases.} \label{fig:1d Reco Gaussian Distribution}
\end{figure}

\paragraph{General behavior of $\psi^*$:} The previous results deal with special cases where either the architecture or the probability density functions are known. In order to derive more general results, we make use of the theory of optimal control. For this, we need to restrict ourselves to piecewise continuously differentiable functions $\psi$ with bounded domain, i.e., we consider the set
\begin{align}
 \Psi = \left\lbrace \psi \in C^0([z_0, z_1]) \,\bigg|\, \psi \text{ piecewise continuously differentiable with } \frac{1}{1+L}\leq \psi'(z)\leq \frac{1}{1-L} \right\rbrace 
\end{align}
with fixed $z_0, z_1\in\R$ and $\text{Pr}(z\leq z_0)\footnote{$\text{Pr}$ denotes the probability w.r.t.\ $z \sim p_Z$.} \leq \varepsilon$, $\text{Pr}(z\geq z_1)\leq \varepsilon$ for some small $\varepsilon>0$ to stay close to the previous setting. 
Furthermore, to apply the optimal control theory we need to split the optimization problem into two successive minimization problems. First, we minimize over all functions $\psi\in\Psi$ with fixed starting point $\psi(z_0) = \psi^0\in\R$. Then, the starting point minimizing the objective function is determined. In combination with Lemma~\ref{lem:reco training equivalent formulation}, the minimization problem thus reads
\begin{equation}\label{eq:overall minimization problem setting optimal control}
   \min_{\psi^0\in\R} \left( \min_{\psi \in \Psi\cap\{ \psi \,|\, \psi(z_0) = \psi^0 \}} \frac{1}{2} \int_{z_0}^{z_1}  p_Z(z) \vert \psi(z) - \hat{\psi}(z) \vert^2 \, \mathrm{d}z\right).
\end{equation}
We would like to stress that the minimization problem defined in Lemma~\ref{lem:reco training equivalent formulation} is not equivalent to the initial one of Equation~\eqref{eq:reco_training_integral} due to the bounded domain of $\psi$. However, this error is negligible for small $\varepsilon$ and the two minimization problems coincide if $\text{supp}(p_Z)\subset [z_0,z_1]$. 
\begin{remark}
    The restriction to a bounded domain of $\psi$ might seem artificial at first. Nevertheless, in applications, the dataset rarely contains samples belonging to low-density regions of $p_Z$, and thus, these cases are covered in our setting. 
\end{remark}

The inner minimization problem can be solved with the help of Pontryagin's maximum principle resulting in the following necessary and sufficient conditions for the derivative of $\psi$.
\begin{lemma}\label{lem:cond derivative pontryagin}
    Let $\psi_0\in \Psi\cap\{ \psi \,|\, \psi(z_0) = \psi^0 \}$ be a solution of the minimization problem 
    \begin{equation}\label{eq:min problem optimal control}
        \min_{\psi \in \Psi\cap\{ \psi \,|\, \psi(z_0) = \psi^0 \}} \frac{1}{2} \int_{z_0}^{z_1}  p_Z(z) \vert \psi(z) - \hat{\psi}(z) \vert^2 \, \mathrm{d}z.
    \end{equation}
    Then, in all points of differentiability, the derivative $\psi_0'$ must satisfy the necessary and sufficient conditions
    \begin{align}
        \psi_0'(z) =   \begin{cases}
                        \frac{1}{1+L} \quad &\text{if } \lambda(z)>0\\
                        f_0(z)\quad &\text{if } \lambda(z)=0\\
                        \frac{1}{1-L}\quad &\text{if } \lambda(z)<0
                    \end{cases} \quad \text{with } z\in [z_0, z_1]
    \end{align}
    for some $f_0:[z_0, z_1]\to\R$ satisfying
    \begin{equation}\label{eq:cond f pontryagin}
        \frac{1}{1+L}\leq f_0(z)\leq \frac{1}{1-L} \quad \forall\, z\in [z_0,z_1]
    \end{equation}
    and $\lambda:[z_0,z_1]\to\R$ with
    \begin{equation}
        \lambda'(z) = - p_Z(z) ( \psi_0(z) - \hat{\psi}(z))  \text{ and } \lambda(z_1)=0.
    \end{equation}
\end{lemma}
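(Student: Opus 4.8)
The plan is to read \eqref{eq:min problem optimal control} as a one-dimensional optimal control problem in which the spatial variable $z$ plays the role of time, the state is $\psi$, and the control is its derivative $u = \psi'$. The dynamics are simply $\psi'(z) = u(z)$ with the pointwise control constraint $u(z) \in [\tfrac{1}{1+L}, \tfrac{1}{1-L}]$, the initial state $\psi(z_0) = \psi^0$ is fixed, and the terminal state $\psi(z_1)$ is free. Writing $J(\psi) = \tfrac12 \int_{z_0}^{z_1} p_Z(z)\,|\psi(z) - \hat\psi(z)|^2 \, \d z$ for the objective, I would introduce the Hamiltonian (in the minimization convention)
\[
  H(z, \psi, u, \lambda) = \tfrac{1}{2}\, p_Z(z)\, |\psi - \hat\psi(z)|^2 + \lambda\, u.
\]
Since only the term $\lambda u$ depends on the control, minimizing $H$ pointwise over $u \in [\tfrac{1}{1+L}, \tfrac{1}{1-L}]$ immediately produces the bang-bang structure of the lemma, with a singular arc on $\{\lambda = 0\}$. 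The adjoint equation $\lambda' = -\partial_\psi H = -p_Z(z)(\psi_0 - \hat\psi)$ together with the transversality condition $\lambda(z_1) = 0$ for the free endpoint reproduce exactly the costate system in the statement. Because the running cost is convex in $\psi$ and the dynamics are affine, the Pontryagin conditions are not only necessary but also sufficient; I would establish both directions by a direct variational argument rather than invoking the principle as a black box.

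For sufficiency, I would take any competitor $\psi \in \Psi$ with $\psi(z_0) = \psi^0$. Convexity of $t \mapsto \tfrac12 t^2$ gives the pointwise bound $\tfrac12|\psi - \hat\psi|^2 - \tfrac12|\psi_0 - \hat\psi|^2 \ge (\psi_0 - \hat\psi)(\psi - \psi_0)$, so multiplying by $p_Z$ and integrating yields
\[
  J(\psi) - J(\psi_0) \ge \int_{z_0}^{z_1} p_Z(z)(\psi_0 - \hat\psi)(\psi - \psi_0)\, \d z
  = -\int_{z_0}^{z_1} \lambda'(z)\,(\psi - \psi_0)\, \d z,
\]
using the adjoint equation $p_Z(\psi_0 - \hat\psi) = -\lambda'$. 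Integrating by parts and discarding the boundary terms (which vanish by $\lambda(z_1) = 0$ and $\psi(z_0) = \psi_0(z_0)$) leaves $\int_{z_0}^{z_1} \lambda(z)(\psi' - \psi_0')\,\d z$. The pointwise condition makes this integrand nonnegative everywhere: where $\lambda > 0$ we have $\psi_0' = \tfrac{1}{1+L}$ so $\psi' - \psi_0' \ge 0$, where $\lambda < 0$ we have $\psi_0' = \tfrac{1}{1-L}$ so $\psi' - \psi_0' \le 0$, and where $\lambda = 0$ it vanishes. Hence $J(\psi) \ge J(\psi_0)$.

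For necessity, I would run the identical integration by parts on the first variation: for an admissible perturbation $h$ with $h(z_0) = 0$ one gets $\delta J(\psi_0)[h] = \int_{z_0}^{z_1} \lambda(z)\, h'(z)\, \d z$, which must be nonnegative for every feasible direction. Choosing $h'$ as a localized bump supported where $\lambda$ has a fixed sign then forces $\psi_0'$ to sit at the matching constraint boundary, whereas on the set where $\psi_0'$ lies in the open interval the feasibility of both signs of $h'$ forces $\lambda = 0$, giving the singular-arc case with $f_0 := \psi_0'$ obeying \eqref{eq:cond f pontryagin}. The main obstacle is exactly this necessity step: the admissible variations are \emph{one-sided} wherever the constraint on $\psi_0'$ is active, so the localized test directions must be built carefully and one must check that $\psi_0 + \varepsilon h$ remains in $\Psi$ for small $\varepsilon > 0$. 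Treating the singular set $\{\lambda = 0\}$, on which $f_0$ is only implicitly determined, and the limited regularity (piecewise $C^1$ functions with $\hat\psi$ and $p_Z$ merely continuous) also demands some care, though neither alters the structural form of the optimal control.
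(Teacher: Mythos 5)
Your proposal is correct, but it takes a genuinely different route from the paper. The paper treats Lemma~\ref{lem:cond derivative pontryagin} by invoking Pontryagin's maximum principle as a black box: necessity of the bang-bang/singular structure, the adjoint equation $\lambda' = -p_Z(\psi_0-\hat{\psi})$, and the transversality condition $\lambda(z_1)=0$ are quoted from \cite[*9.6 Theorem 1]{Luenberger1969optimization} and \cite[Theorem 1]{Seierstad1977optimalControl}, and sufficiency is obtained by citing \cite[Theorem 2]{Seierstad1977optimalControl} after checking that the Hamiltonian $H=\lambda u + \tfrac12 p_Z|\psi-\hat{\psi}|^2$ is jointly convex in $(\psi,u)$ and that the control set is convex. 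You instead set up the identical control-theoretic formulation (state $\psi$, control $u=\psi'$, free right endpoint) but prove both directions by hand. Your sufficiency argument is complete and is in fact exactly the mechanism hiding inside Seierstad's sufficiency theorem made explicit: the pointwise convexity inequality, substitution of the adjoint equation, integration by parts with the boundary terms killed by $\lambda(z_1)=0$ and $h(z_0)=0$, and the sign analysis of $\lambda\,(\psi'-\psi_0')$ under the stated switching law. Your necessity direction is only sketched, but the sketch identifies the correct construction (localized bump perturbations in $h'$, one-sided where the control constraint is active, two-sided on the interior, with a uniformity margin $\psi_0'\geq \tfrac{1}{1+L}+\rho$ on a positive-measure subset to keep $\psi_0+\varepsilon h$ feasible) and honestly flags the technical points; this is the standard direct proof of the maximum principle in this scalar, state-unconstrained setting and does go through, noting only that for merely continuous $p_Z$ and $\hat{\psi}$ the costate $\lambda(z)=\int_z^{z_1}p_Z(\tilde z)(\psi_0(\tilde z)-\hat{\psi}(\tilde z))\,\mathrm{d}\tilde z$ is $C^1$, so the integration by parts against piecewise-$C^1$ perturbations is legitimate. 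What each approach buys: the paper's citation-based proof is shorter and defers all regularity bookkeeping to the references, while your variational argument is elementary and self-contained, makes transparent that convexity is what upgrades the Pontryagin conditions from necessary to sufficient here, and would survive in settings not exactly matching the cited theorems' hypotheses---at the cost of having to carry out the one-sided variation details that the proposal currently leaves as a plan.
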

\begin{proof}
    Let us denote the set of all points $z\in [z_0,z_1]$ where $\psi_0$ is differentiable by $D$. 
    For Problem~\eqref{eq:min problem optimal control}, Pontryagin's maximum principle, c.f.\ \cite[*9.6 Theorem 1]{Luenberger1969optimization} and \cite[Theorem 1]{Seierstad1977optimalControl}, provides the necessary conditions
    \begin{align}
        &\psi_0'(z)=u_0(z) \quad \text{for all } z\in D \text{ and some piecewise continuous function } u_0:[z_0,z_1]\to \R \\
        &\frac{1}{1+L}\leq u_0(z)\leq \frac{1}{1-L} \quad \forall\, z\in [z_0,z_1] \label{eq:bi-lipschitz constraint derivative 2} \\
        &\lambda'(z) = - p_Z(z) ( \psi_0(z) - \hat{\psi}(z))  \text{ with } \lambda(z_1)=0 \label{eq:derivative lambda} \\
        &H(\psi_0,u_0,\lambda,z)\leq H(\psi_0,u,\lambda,z) \quad \forall\, u \text{ satisfying } \eqref{eq:bi-lipschitz constraint derivative 2} \label{eq:Hamiltonian}
    \end{align}
    with the Hamiltonian function
    \begin{equation}
         H(\psi,u,\lambda,z) = \lambda(z)\, u(z) + \frac{1}{2}\, p_Z(z)\, \vert \psi(z) - \hat{\psi}(z) \vert^2.
    \end{equation}
    Condition~\eqref{eq:Hamiltonian} is equivalent to setting
    \begin{equation}
        u_0(z) =   \begin{cases}
                        \frac{1}{1+L} \quad &\text{if } \lambda(z)>0\\
                        f_0(z)\quad &\text{if } \lambda(z)=0\\
                        \frac{1}{1-L}\quad &\text{if } \lambda(z)<0
                    \end{cases}
        \qquad \text{for some function } f_0 \text{ satisfying } \eqref{eq:cond f pontryagin}.
    \end{equation}
     Furthermore, For a function $\psi_0$ satisfying the conditions of Pontryagin's maximum principle to be a solution of Problem~\eqref{eq:min problem optimal control}, the Hamiltonian needs to be jointly convex in $\psi$ and $u$ and the constrained set defined by Equation~\eqref{eq:bi-lipschitz constraint derivative 2} needs to be convex, cf.\ \cite[Theorem 2]{Seierstad1977optimalControl}. Both of these conditions are satisfied in our setting and thus, the proof is complete.  
\end{proof}

We would like to remark that $\lambda$ can be expressed as 
\begin{equation}
    \lambda(z) = \int_{z}^{z_1} p_Z(\tilde{z}) ( \psi_0(\tilde{z}) - \hat{\psi}(\tilde{z}))  \, \mathrm{d}\tilde{z}.
\end{equation}
whenever $p_Z$ and $\hat{\psi}$ are continuous. To illustrate the previous lemma let us look at a very simple example. Assume that $\hat{\psi}'(z) > \nicefrac{1}{1-L}$ for all $z\in [z_0,z_1]$. Then, Lemma~\ref{lem:cond derivative pontryagin} in combination with a minimization over the starting points $\psi^0$ shows that a solution to problem~\eqref{eq:overall minimization problem setting optimal control} is given by 
\begin{equation}
    \psi^\ast(z) = \frac{1}{1-L} z + \frac{\int_{z_0}^{z_1} p_Z(\tilde{z}) \hat{\psi}(\tilde{z})\,\mathrm{d}\tilde{z} - \nicefrac{1}{1-L} \int_{z_0}^{z_1} p_Z(\tilde{z}) \tilde{z} \,\mathrm{d}\tilde{z}}{\int_{z_0}^{z_1} p_Z(\tilde{z}) \,\mathrm{d}\tilde{z}}\quad \text{for } z\in [z_0,z_1].
\end{equation}
\begin{figure}[t]
    \begin{center}
    \begin{tikzpicture}[scale=0.55]
      \draw[step=1.0, lightgray, thin] (-6.2,-4.5) grid (6.3,5.3);
      \draw[->, thick, very thick] (-6.2, 0) -- (6.3, 0) node[below] {$z$};
      \draw[->, thick, very thick] (0, -4.5) -- (0, 5.3);

      \draw[-, thick] (0.8, -0.1) -- (0.8, 0.1)node[below] {$k$};
      \draw[-, thick] (-0.8, -0.1) -- (-0.8, 0.1)node[below] {-$k$};
      
      \draw[scale=1, domain=-4.2:5.1, variable=\t, black, very thick, dashed] plot[samples=161] ({\t}, {(2/(0.5^2+(0.9*1)^2)^(3/2)*(0.9*1^2 *\t + 4*0.5^2)*1/(sqrt(2*pi))*exp(-0.5*(\t-4*0.9)^2/(0.5^2+(0.9*1)^2)) + 2/(0.5^2+(0.9*1)^2)^(3/2)*(0.9*1^2 *\t - 4*0.5^2)*1/(sqrt(2*pi))*exp(-0.5*(\t+4*0.9)^2/(0.5^2+(0.9*1)^2)))/(2*1/(sqrt(2*pi)*sqrt(0.5^2+(0.9*1)^2))*exp(-0.5*(\t-4*0.9)^2/(0.5^2+(0.9*1)^2)) + 2*1/(sqrt(2*pi)*sqrt(0.5^2+(0.9*1)^2))*exp(-0.5*(\t+4*0.9)^2/(0.5^2+(0.9*1)^2)))});
      \node[black, right] at (-1,-2) {$\hat{\psi}$};
        
        \filldraw[gray, opacity=0.2] (-3.46,-4.5) -- (4.07,5.3) -- (6.3,5.3) -- (6.3,1.26) -- (-6.2,-1.24) -- (-6.2,-4.5); 
        \draw[scale=1, domain=-3.46:4.07, variable=\t, gray] plot[samples=161] ({\t}, {1.3*\t});
       \node[gray, right] at (3.3,5.85) {$\frac{1}{1-L}\Id$}; 
       \draw[scale=1, domain=-6.2:6.3, variable=\t, gray] plot[samples=161] ({\t}, {0.2*\t});
       \node[gray, right] at (6.3,1.3) {$\frac{1}{1+L}\Id$};

       \draw[scale=1, domain=-6.1:6.1, variable=\t, blue] plot[samples=161] ({\t}, {6*1/(sqrt(2*pi)*sqrt(0.5^2+(0.9*1)^2))*exp(-0.5*(\t-4*0.9)^2/(0.5^2+(0.9*1)^2)) + 6*1/(sqrt(2*pi)*sqrt(0.5^2+(0.9*1)^2))*exp(-0.5*(\t+4*0.9)^2/(0.5^2+(0.9*1)^2))}) node[above] {$p_Z$}; 
       
       \draw[scale=1, domain=-4.2:-2.1, variable=\t, Plum, very thick] plot[samples=161] ({\t}, {(2/(0.5^2+(0.9*1)^2)^(3/2)*(0.9*1^2 *\t + 4*0.5^2)*1/(sqrt(2*pi))*exp(-0.5*(\t-4*0.9)^2/(0.5^2+(0.9*1)^2)) + 2/(0.5^2+(0.9*1)^2)^(3/2)*(0.9*1^2 *\t - 4*0.5^2)*1/(sqrt(2*pi))*exp(-0.5*(\t+4*0.9)^2/(0.5^2+(0.9*1)^2)))/(2*1/(sqrt(2*pi)*sqrt(0.5^2+(0.9*1)^2))*exp(-0.5*(\t-4*0.9)^2/(0.5^2+(0.9*1)^2)) + 2*1/(sqrt(2*pi)*sqrt(0.5^2+(0.9*1)^2))*exp(-0.5*(\t+4*0.9)^2/(0.5^2+(0.9*1)^2)))});
       \draw[scale=1, domain=-2.1:2.1, variable=\t, Plum, very thick] plot[samples=161] ({\t}, {1.3*\t});
       \draw[scale=1, domain=2.1:5.1, variable=\t, Plum, very thick] plot[samples=161] ({\t},{(2/(0.5^2+(0.9*1)^2)^(3/2)*(0.9*1^2 *\t + 4*0.5^2)*1/(sqrt(2*pi))*exp(-0.5*(\t-4*0.9)^2/(0.5^2+(0.9*1)^2)) + 2/(0.5^2+(0.9*1)^2)^(3/2)*(0.9*1^2 *\t - 4*0.5^2)*1/(sqrt(2*pi))*exp(-0.5*(\t+4*0.9)^2/(0.5^2+(0.9*1)^2)))/(2*1/(sqrt(2*pi)*sqrt(0.5^2+(0.9*1)^2))*exp(-0.5*(\t-4*0.9)^2/(0.5^2+(0.9*1)^2)) + 2*1/(sqrt(2*pi)*sqrt(0.5^2+(0.9*1)^2))*exp(-0.5*(\t+4*0.9)^2/(0.5^2+(0.9*1)^2)))});
       \node[Plum, right] at (-2.1,-1.3) {$\psi^\ast$};
    
    \end{tikzpicture}
    \caption{Behavior of the solution $\psi^\ast$ of \eqref{eq:overall minimization problem setting optimal control} in the case that $p_z$ can be represented as a Gaussian mixture, c.f.\ Remark~\ref{rem:posterior expectation gmm}. Observe that the slope of the unconstrained solution $\hat{\psi}$ exceeds $\nicefrac{1}{1-L}$ in the interval $[-k,k]$ resulting in $\lambda(z)<0$ for $z\in[-k-\varepsilon,k+\varepsilon]$ with $\varepsilon>0$. As a result, $\psi^\ast$ is equal to $z\mapsto\nicefrac{1}{1-L}\,z$ for $z\in[-k-\varepsilon,k+\varepsilon]$ and to $\hat{\psi}$ outside this interval.
    }
    \label{fig:fig:1d reco general case}
    \end{center}
\end{figure}
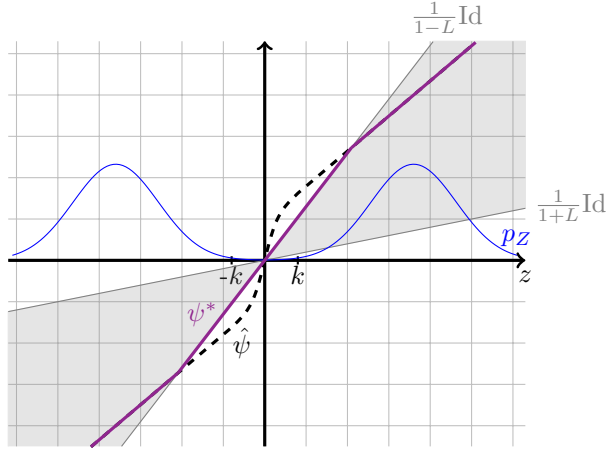
In addition, the general behavior of the solution to problem~\eqref{eq:overall minimization problem setting optimal control} is illustrated in Figure~\ref{fig:fig:1d reco general case}. This exemplifies that the best possible architecture choice, when considering the reconstruction training loss, is not necessarily an affine linear one unlike in the approximation training studied in the last section. This is partly because of the influence of the noise distribution $p_H$, which cancels out when using the approximation training loss. Moreover, the variance of the prior and noise distribution influences the best architecture and parameter choice, which is not the case in the approximation training setting where only the expectation of the prior distribution influences the solution.

\section{Numerical experiments}
\label{sec:numeric_experiments}

To study the implications of the previously developed theory for the practical application of iResNets for solving inverse problems, we perform experiments on two forward operators, where we compare approximation training~\eqref{eq:approx_training} to reconstruction training~\eqref{eq:reco_training}. In all experiments, we train single-layer iResNets with diagonal~\eqref{eq:diagonal} structure where the residual functions $f_\theta$ comprise multiple layers.

In the setting presented in the following sections, we consider a discrete convolution with a smoothing kernel $a\in\mathbb{R}^{9\times 9}$ that is depicted in Figure~\ref{fig:kernel} and zero padding to preserve dimensionality. Since the resulting Toeplitz matrix $M_a$ that performs the convolution with $a$ is symmetric and positive definite, this serves immediately as a self-adjoint operator $A=M_a$.
The second inverse problem we aim to solve is given by a discrete Radon operator $\tilde{A}:\mathbb{R}^{28\times28}\to\mathbb{R}^{30\times41}$, such that $A=\tilde{A}^\ast \tilde{A}$, which is in line with the setting used in the prior work~\cite{iresnet_01_regtheory}. We restrict the discussion of the numerical results to the convolution operator, and for the sake of completeness, the results for the Radon operator are provided in Appendix~\ref{app:numerics_radon}.

\begin{figure}[t!]
    \centering
    \includegraphics[width=0.3\textwidth]{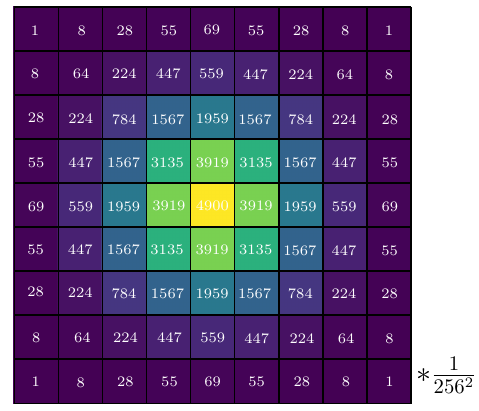}
    \caption{The filter kernel $a$ used in the convolution operator $M_a$.}
    \label{fig:kernel}
\end{figure}

In both cases, we train our models on the MNIST handwritten digits dataset \cite{lecun1998mnist}, where we treat images as flattened vectors in $\mathbb{R}^{28\cdot28}$. In addition, we study an artificially generated bimodal Gaussian dataset for which we sample the prior distribution in every singular vector independently from a (bimodal) Gaussian mixture distribution. The pdf in every $j$ therefore reads
\begin{align}
    p_{X,j}(x_j)=\rho_1\ g_{v, t_1}(x_j) + \rho_2\ g_{v, t_2}(x_j)
\end{align}
where $\rho_1=0.35$, $\rho_2=0.65$, $v=0.15$, $t_1=-0.6$, $t_2=0.6$ and $g_{s,t}$ is the pdf of a Gaussian with standard deviation $s$ and expectation value $t$. This bimodal structure enables us to further explore the data dependency of the optimized models.

The architecture of the subnetworks included in the diagonal architecture and their training is realized identical to~\cite{iresnet_01_regtheory}: Each subnetwork consists of a three-layer fully connected network, equipped with 35 hidden neurons in each of the first two layers and one output neuron. In addition, we apply the ReLU activation function to the first and second layers of each subnetwork. Figure~7 of~\cite{iresnet_01_regtheory} depicts the architecture. The network weights are optimized using Adam~\cite{KingmaB14}. Extending the approach in~\cite{miyato2018spectral}, we parameterize the network weights in each layer of all subnetworks to fulfill the Lipschitz constraint.
Reconstruction training is accomplished by computing the inverse of the iResNets through the usual fixed point iteration (for 30 iterations) and backpropagating through the unrolled iteration to optimize the network weights. As a result, one has to run the iterative inversion in every training step, resulting in a much greater computational effort for the reconstruction training than approximation training. The Lipschitz bound, realized by a proper parameterization of the $f_\theta$, has to be computed only once per iteration.

Of course, a numerically more efficient training approach would be to extend on Remark~\ref{rem:inverse_of_ires_is_ires} and construct $\psi$, the inverse of an iResNet, as a scaled iResNet that is trained on reversed data points but similar to the approximation approach. However, we currently do not have guarantees on the approximation capability of the involved (fully-connected) iResNets architectures and their inverses. To provide a fair comparison, we aim to enforce the same inductive bias in both training methods by choosing the same forward mapping architecture.

The code for our experiments is available at \url{https://gitlab.informatik.uni-bremen.de/inn4ip/iresnet-regularization}.

In the described settings, we perform experiments for varying noise levels $\delta_\ell$ and Lipschitz constants $L_m$, where we choose
\begin{align}
    \hat{\delta}_\ell &= \begin{cases}
        \left(\frac{1}{3}\right)^\ell, & \text{for } \ell > 0\\
        0, & \text{for } \ell=0
    \end{cases} & \ell&= 0,\cdots, 6\\
   L_m &= 1-\left(\frac{1}{3}\right)^m, & m &= 1,\cdots,5
\end{align}
and the resulting noise $\eta$ is Gaussian noise with standard deviation $\delta_\ell=\hat{\delta}_\ell\cdot\mathrm{std}_{\text{dataset}}$, where $\mathrm{std}_{\text{dataset}}$ denotes the averaged standard deviation of the coefficients $\mathrm{std}_j := \mathrm{std}( \langle x^{(i)},v_j \rangle_{i=1,\hdots,N})$ of the current dataset (i.e., standard deviation with
respect to $i$, mean with respect to $j$). In the following, we discuss the results in terms of the learned solutions, the resulting data-dependent filter functions, and the regularization and approximation properties of the models.

\subsection{Learned inverse mappings}\label{sec:numerics_solutions}
To compare the characteristics of the approximation and reconstruction training to our theoretical findings, we plot the learned one-dimensional inverse mappings $\psi_j$ in the different components $j$ (corresponding to the singular values $\sigma_j$) for the bimodal dataset. We visualize the results in Figure~\ref{fig:solution_plots} for a large and a small eigenvalue of $A$.

\begin{figure}
    \centering
    \begin{subfigure}[b]{0.48\textwidth}
    \centering
        \includegraphics[width=7.8cm]{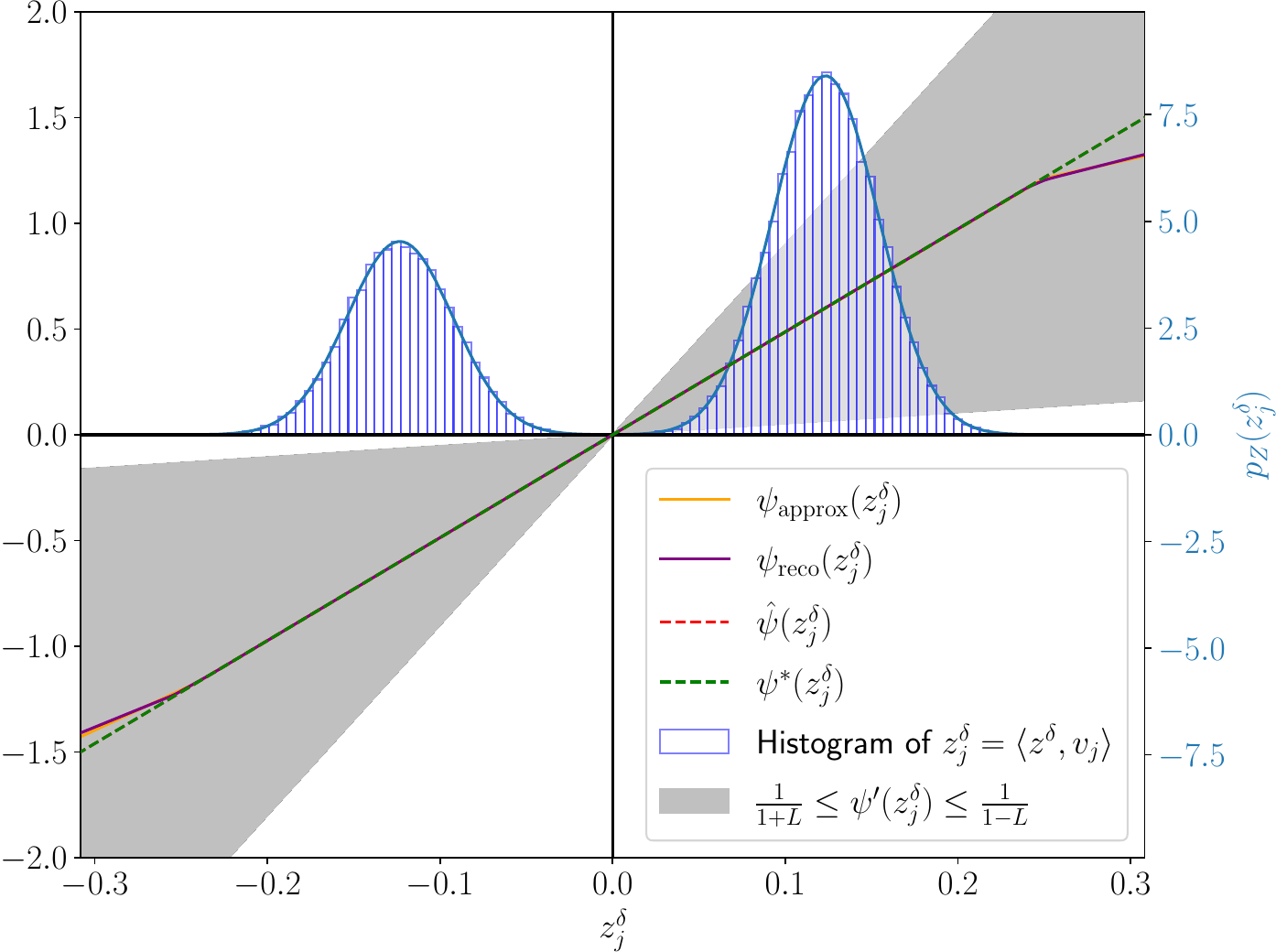}
    \end{subfigure}
    \hfill
    \begin{subfigure}[b]{0.48\textwidth}
    \centering
        \includegraphics[width=7.8cm]{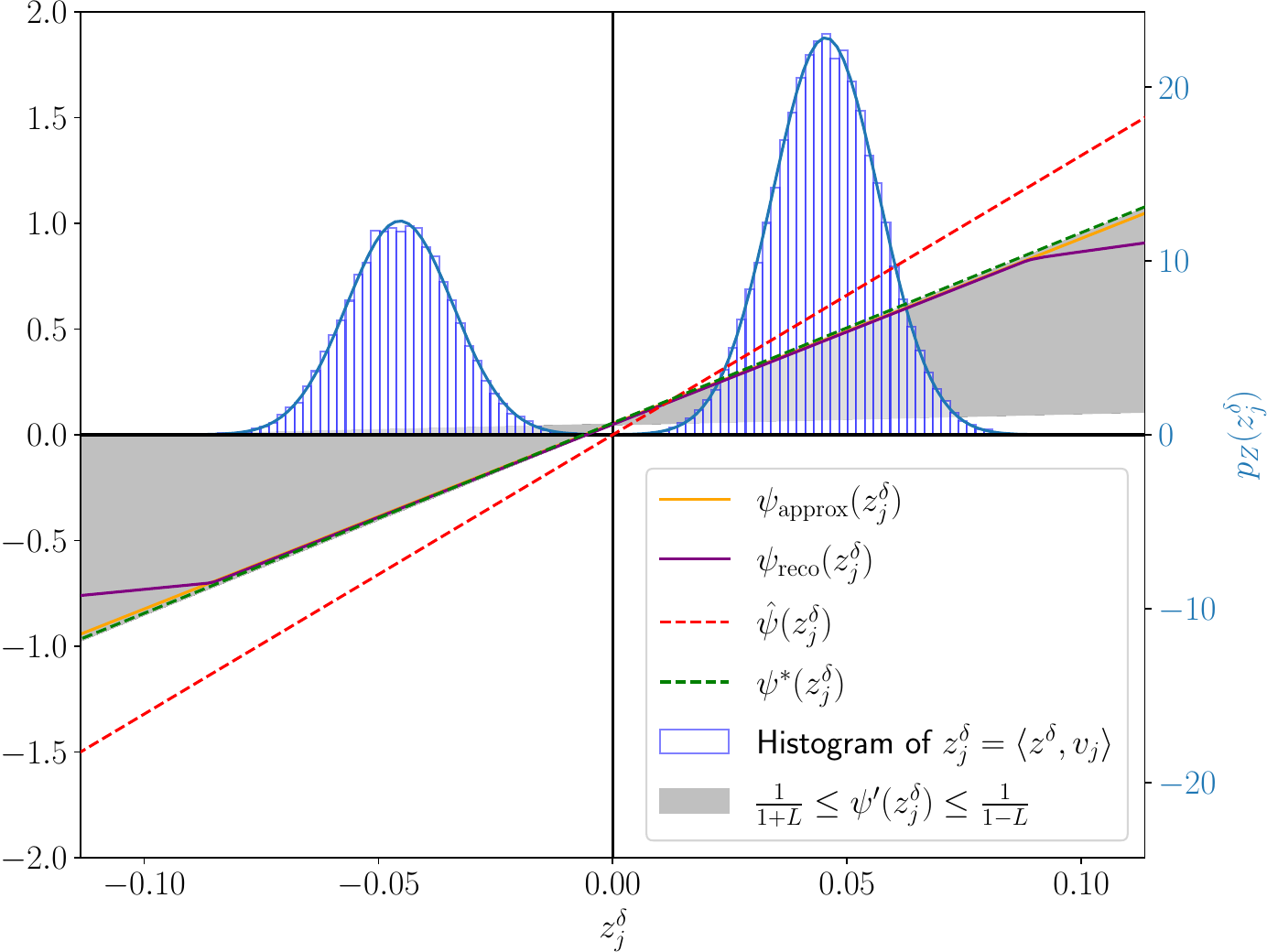}
    \end{subfigure}

    \vspace{2.0ex}
    \begin{subfigure}[b]{0.48\textwidth}
    \centering
        \includegraphics[width=7.8cm]{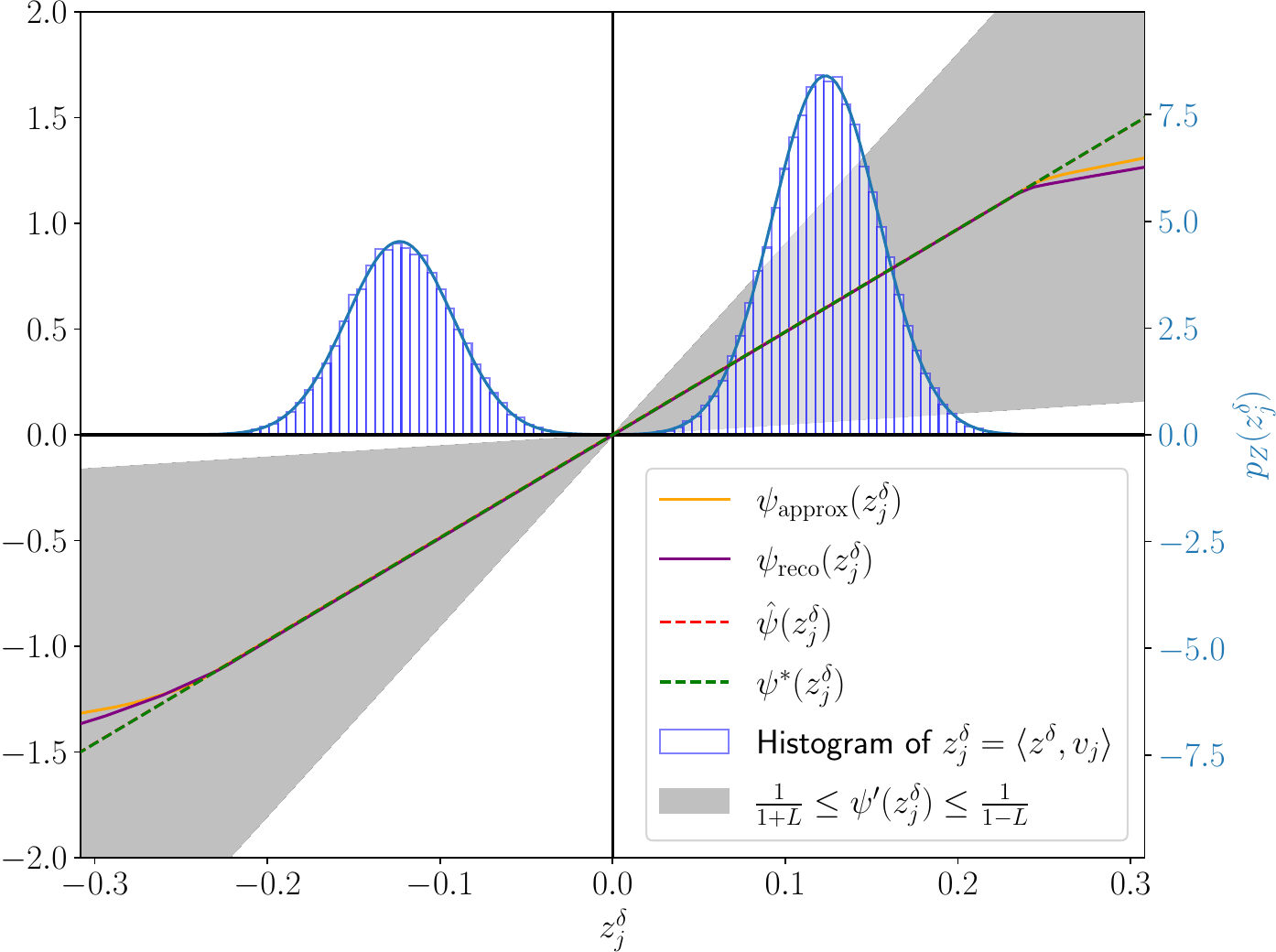}
    \end{subfigure}
    \hfill
    \begin{subfigure}[b]{0.48\textwidth}
    \centering
        \includegraphics[width=7.8cm]{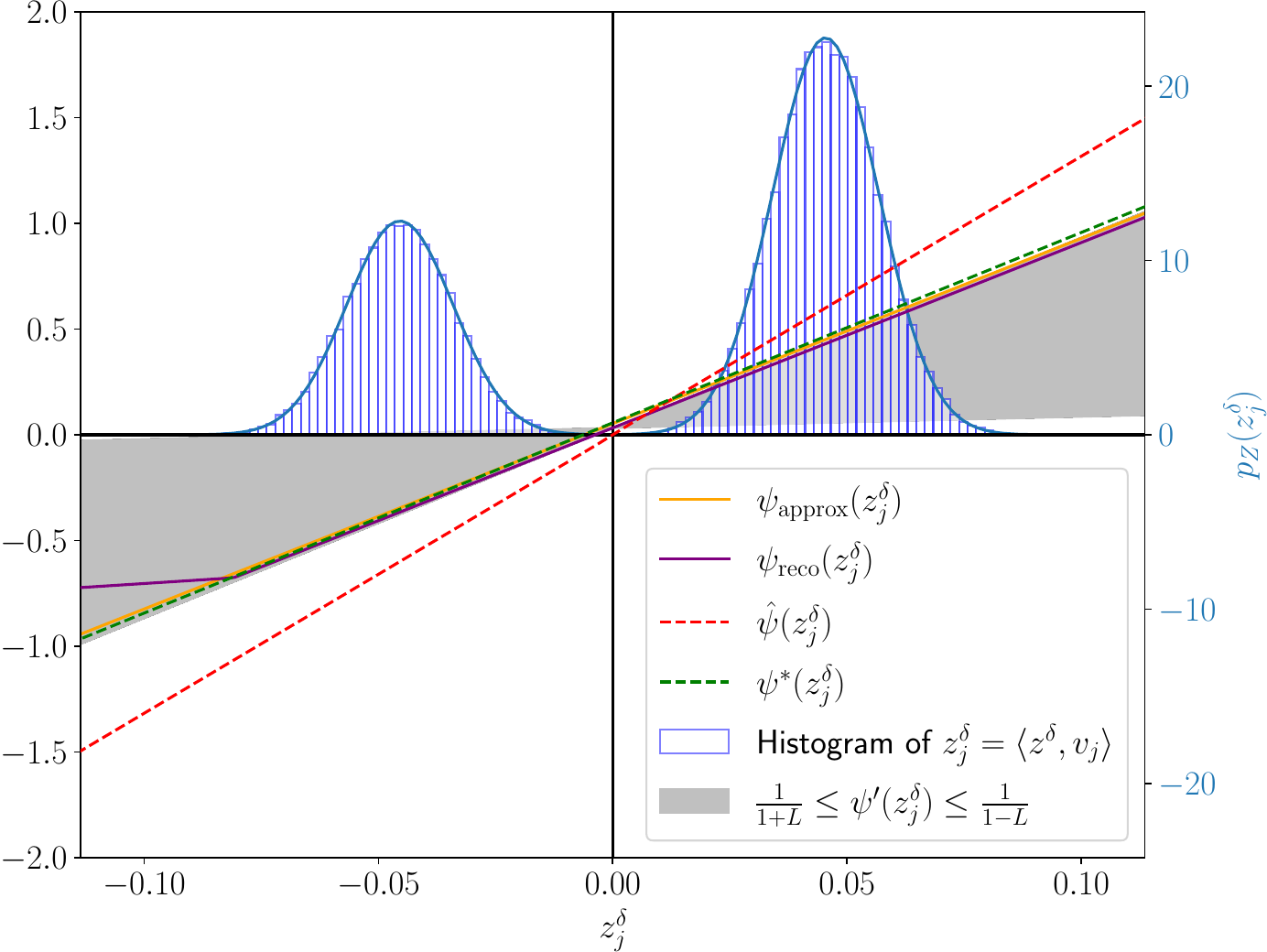}
    \end{subfigure}

    \vspace{2.0ex}
    \begin{subfigure}[b]{0.48\textwidth}
    \centering
        \includegraphics[width=7.8cm]{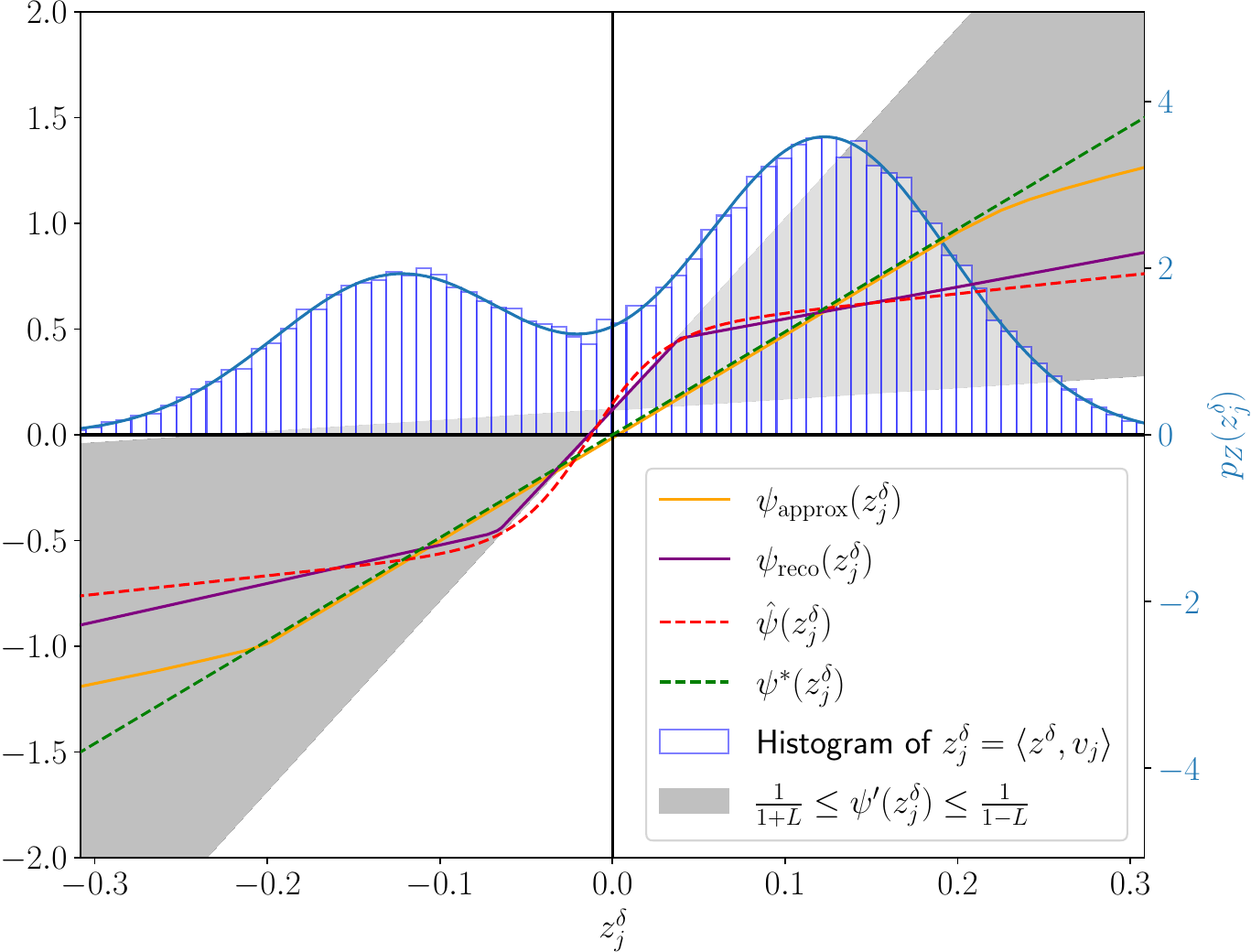}
        \subcaption{$\sigma_j^2=0.205, \ j = 95$}
    \end{subfigure}
    \hfill
    \begin{subfigure}[b]{.48\textwidth}
    \centering
        \includegraphics[width=7.8cm]{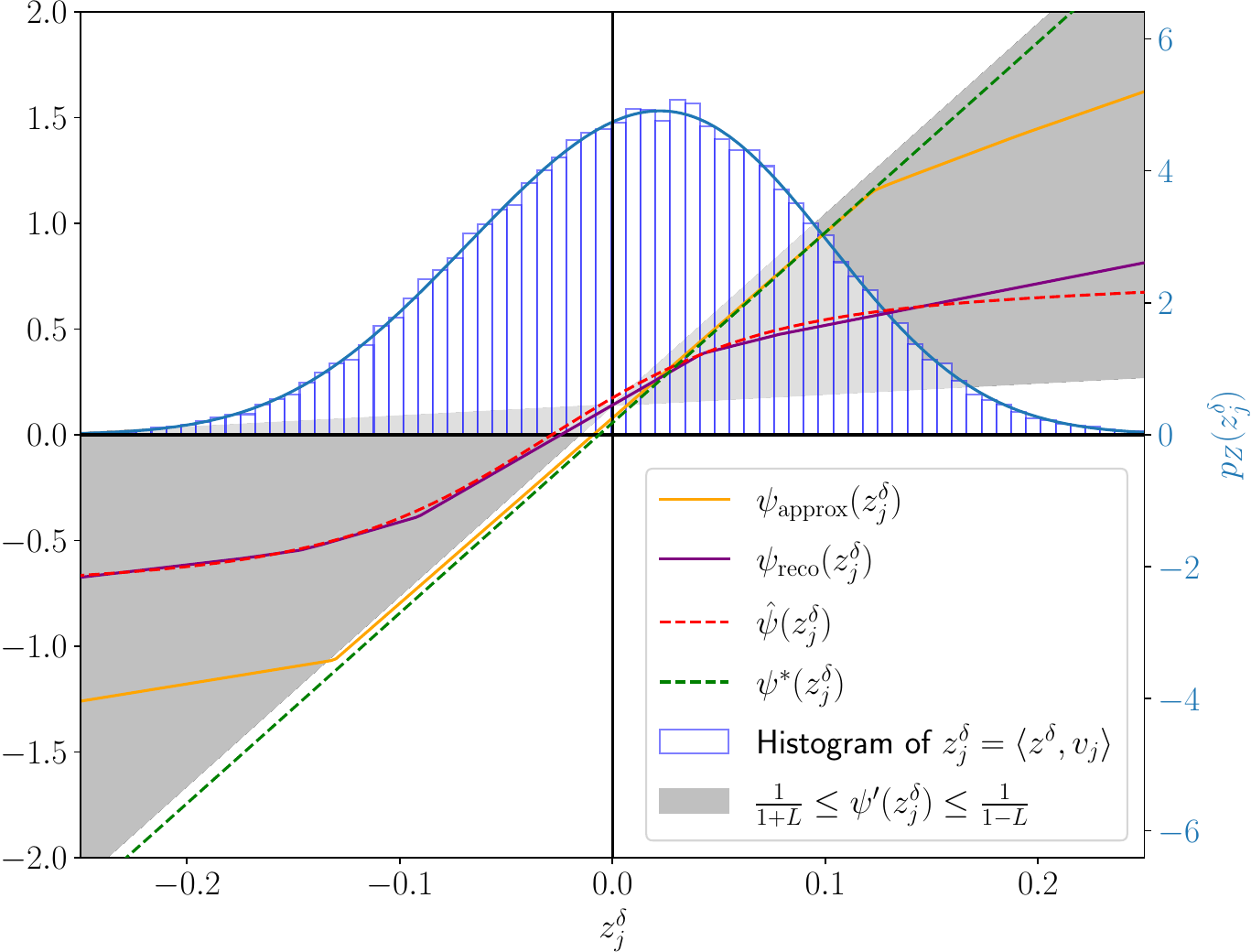}
        \subcaption{$\sigma_j^2=0.075, \ j = 150$}
    \end{subfigure}
    
    \caption{Reconstructions $\psi_\mathrm{approx}^\ast(z_j^\delta)$ trained via approximation training and  $\psi_\mathrm{reco}^\ast(z_j^\delta)$ trained via reconstruction training at Lipschitz bound $L_2$ for different singular values and 
    for noise levels "zero"~\textit{($\delta_0$, top row)}, "small"~\textit{($\delta_6$, middle row)} and "large"~\textit{($\delta_2$, bottom row)} for $A=M_a$.}
    \label{fig:solution_plots}
\end{figure}

In the case of approximation training, we observe the predicted affine linear behavior in the support of the data distribution, limited by the Lipschitz constraints and aligned to the expectation value of the training data. This result is independent of the noise and optimal only for small noise levels. At the boundaries of the data distribution seen during training, the proper behavior of the optimal solution from Theorem~\ref{thm:approx_training_lipschitz} is not learned properly. 

For the case of reconstruction training, we can again corroborate our theoretical findings numerically. For this purpose, we compute the posterior expectation value $\mathbb{E}(x_j|z_j^\delta)$ for our setting.
\begin{remark}\label{rem:posterior expectation gmm}
    For the multimodal Gaussian distribution with pdf $p_X$ and Gaussian noise $p_H$,
\begin{align}
    p_H(z) &= g_{w,0}(z) \\
    p_X(x) &= \sum_{k=1}^K \rho_k\, g_{v_k,t_k}(x),
\end{align}
we have 
\begin{align}
    p_z(z) = \sum_{k=1}^K \rho_k \,g_{u_k,t_k\sigma^2}(z)
\end{align}
and the posterior expectation value $\hat{\psi}$ reads 
\begin{align}
\mathbb{E}(x \vert z) &= \frac{\sum_{k=1}^K  \frac{\rho_k}{u_k^2} (\sigma^2 v_k^2z+t_kw^2)\,g_{u_k,t_k\sigma^2}(z)}{\sum_{k=1}^K \rho_k \,g_{u_k,t_k\sigma^2}(z)} \notag \\
&= \sigma^2 z \frac{ \sum_{k=1}^K  \frac{v_k^2 \rho_k}{u_k^2} \, g_{u_k,t_k\sigma^2}(z)}{\sum_{k=1}^K \rho_k\, g_{u_k,t_k\sigma^2}(z)} + w^2\, \frac{ \sum_{k=1}^K  \frac{t_k\rho_k}{u_k^2}\,g_{u_k,t_k\sigma^2}(z)}{\sum_{k=1}^K \rho_k \,g_{u_k,t_k\sigma^2}(z)},
\end{align}
where $u_k = \sqrt{w^2+(\sigma^2v_k)^2}$. We note that this recovers the linear behavior $\mathbb{E}(x \vert z)=z/\sigma^2$ in the noise-free case while it adds a correction term in the noisy case that pulls and pushes data points towards more likely results. 
\end{remark}
For regions within the support of the data distribution, where the constraint~\eqref{eq:cond_on_inverse_net} permits the model to approximate $\mathbb{E}(x_j\vert z_j^\delta)$, we obverse in Figure~\ref{fig:solution_plots} that the learned solutions match well with the posterior expectation. If the model reaches the limiting constraint, it exhausts the possible slope to be as close to the posterior expectation value as possible. This results in a much more data-dependent inversion scheme, where reconstructions that were more likely to appear during training are favored. Consequently, the model can compensate for larger noise levels based on additional learned knowledge about the data. The behavior of the learned solution thus coincides with the theoretically founded one in Figure~\ref{fig:fig:1d reco general case}.

In the case of large noise, the reconstruction-based model regularizes and does not necessarily exhaust the Lipschitz constraint, while the approximation model always tries to fit the operator as well as possible. If noise is absent, the learned mappings coincide for both training strategies.

\subsection{Learned filter functions}\label{sec:numerics_filters}
As a link to classical regularization theory, we visualize the data-dependent filter functions that correspond to the learned models. For this purpose, we evaluate the filter $r_L$, where
\begin{align}
    (\Id-f_{\theta,j})^{-1}(s(q))-\hat{b}_{L,j}=r_L(\sigma_j^2,s(q))\,s(q)\ \text{for }s\in\mathbb{R},
\end{align}
for each singular value $\sigma_j$ at data points $s(q) := \sigma_j^2 (\mu_{X,j} + q \cdot \mathrm{std}_j)$, where we subtract the axis intercept $\hat{b}_{L,j}=(\Id-f_{\theta,j})^{-1}(0)$. The variable $q \in \mathbb{R}$ determines the number of standard deviations $\mathrm{std}_j$ away from the mean value $\mu_{X,j} = \frac{1}{N} \sum_{i=1}^N \langle x^{(i)},v_j  \rangle$ in the image of the dataset with respect to the fixed singular value of the operator. For simplicity, we define 
\begin{equation}\label{eq:filter_function_numeric}
    R_L(\sigma_j,q) := r_L(\sigma_j^2,s(q))\,s(q).
\end{equation}
The results for approximation and reconstruction training are visualized as surface plots for both datasets in Figures~\ref{fig:surfacefilter_conv_bimodal} and~\ref{fig:surfacefilter_conv_MNIST}. In all cases, the $r_L$ show a sensible behavior, damping small singular values and roughly satisfying $r_L\to 1$ as $\sigma^2\to 1$.

On the bimodal dataset in Figure~\ref{fig:surfacefilter_conv_bimodal}, data dependency occurs in all filter functions. In the case of approximation training, this is visible only in the sense that proper regularizing behavior is learned exclusively within the support of the data distribution. Reconstruction training shows a more complex dependency since the posterior mean aims to push points that lie close to $0$ towards the two peaks of the bimodal data distribution if noise is present in the data, as can also be seen in Figure~\ref{fig:solution_plots}. As a result, the medium singular values, in which an amplified slope in the solution is, on the one hand, permitted by the Lipschitz constraint and, on the other hand, also necessary due to the present influence of noise, are elevated near the center of the distribution. This results in filter functions that may not lead to convergent regularization schemes but include data-driven corrections for the observed noise.

\begin{figure}
    \centering
    \begin{subfigure}{\textwidth}
        \centering
        \includegraphics[width=.9\textwidth]{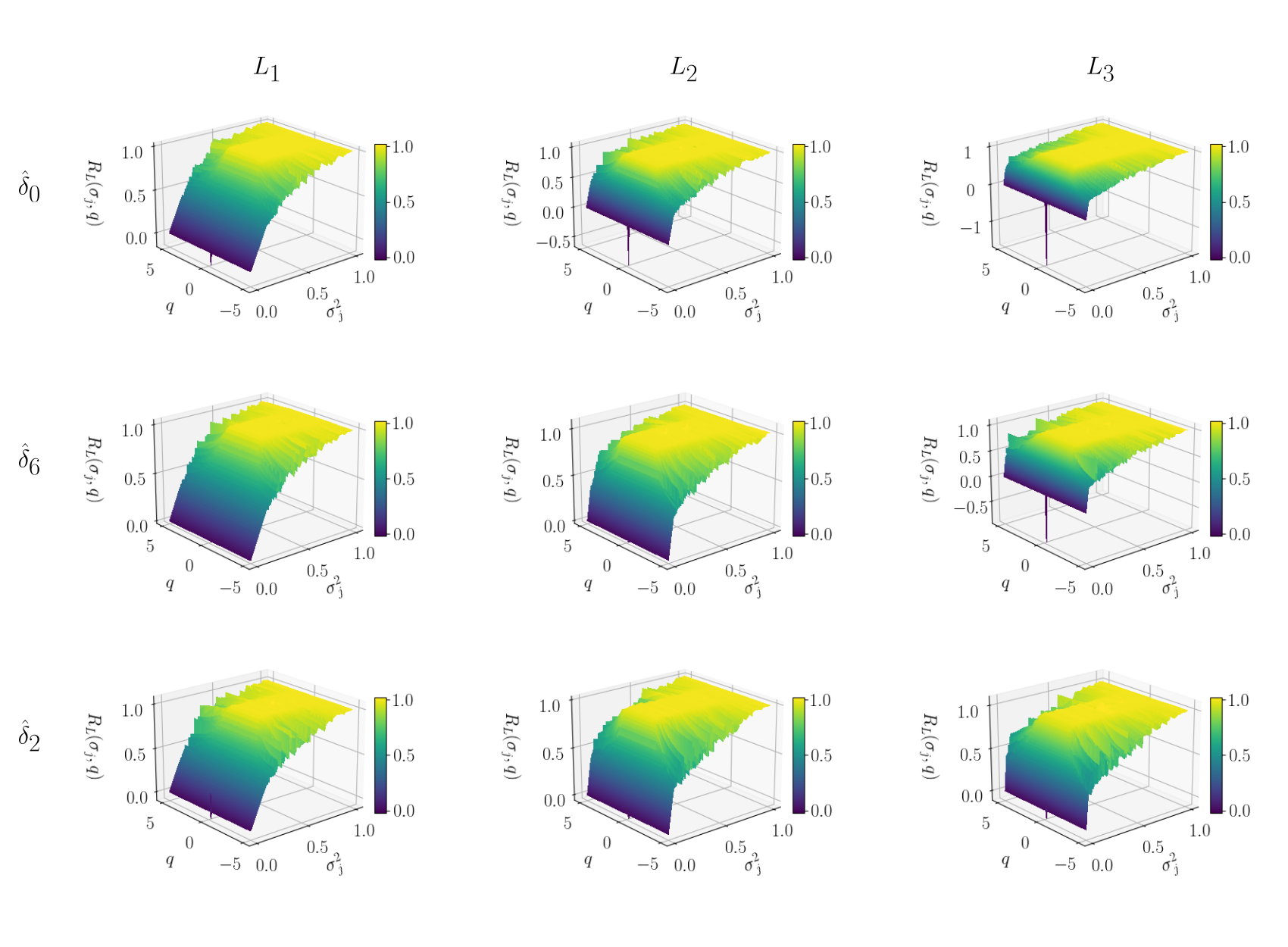}
    \end{subfigure}

    \vspace{-5.0ex}
    \begin{subfigure}{\textwidth}
        \centering
        \includegraphics[width=.9\textwidth]{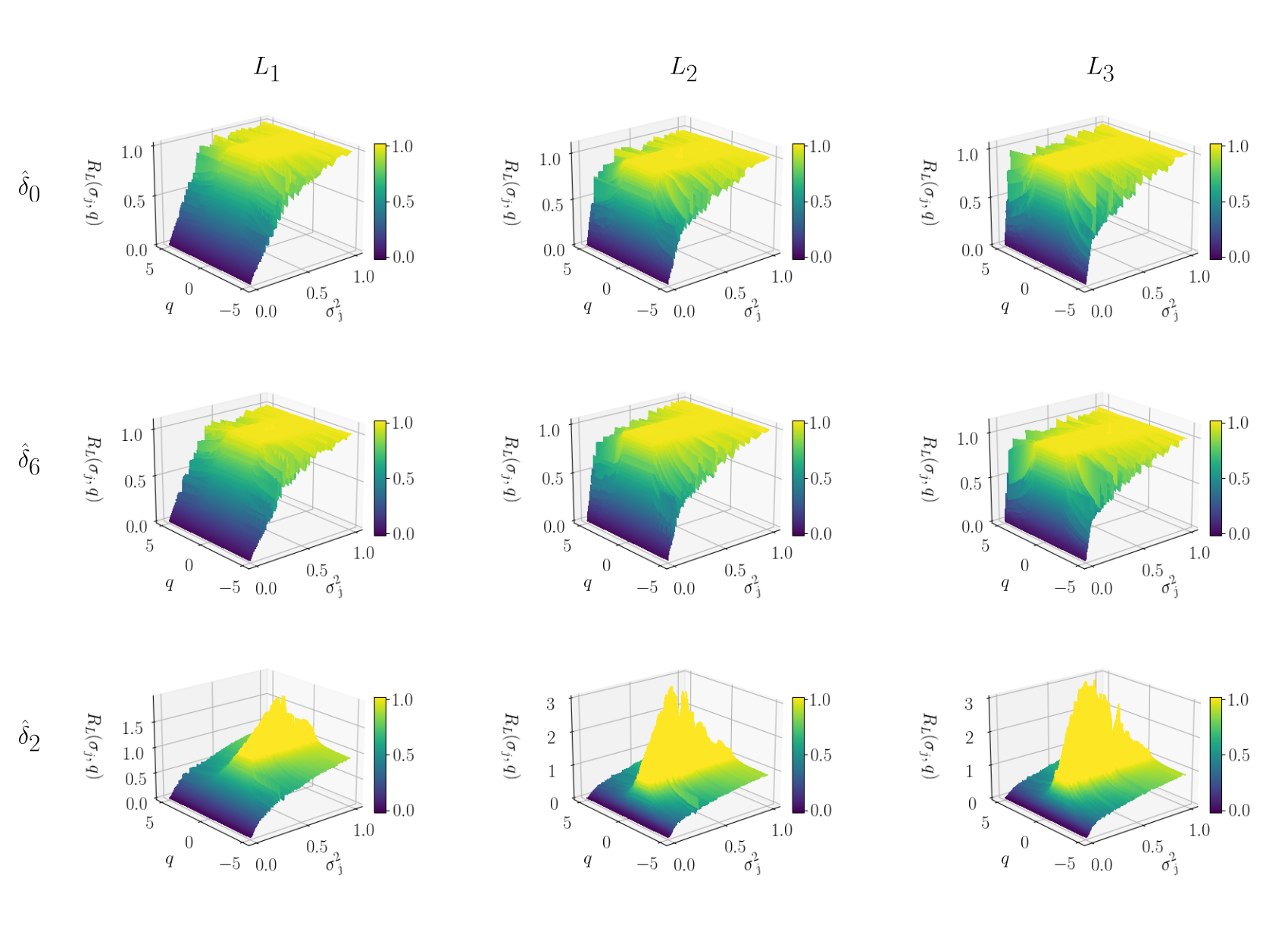}
    \end{subfigure}
    \caption{Filter functions $R_L(\sigma_j,q)$ as defined in~\eqref{eq:filter_function_numeric} corresponding to trained networks $\varphi_{\theta(L_m),\delta_\ell}$ for $m=1,2,3$~\textit{(columns)} and $\ell = 0,6,2$~\textit{(rows)}, trained via approximation training~\textit{(top)} and via reconstruction training~\textit{(bottom)} on the bimodal dataset for $A = M_a$.}
    \label{fig:surfacefilter_conv_bimodal}
\end{figure}

On MNIST in Figure~\ref{fig:surfacefilter_conv_MNIST}, the stronger data dependence of the reconstruction training is not directly visible; all filters appear to be approximately constant in the range of 5 standard deviations around the mean. This is likely to be due to the fact that the distributions in the singular values are all approximately unimodal. Therefore, the correction could be similar to the simple regularizing behavior of the neural networks in the approximation training approach. In return, the action of $L$ as a regularization parameter becomes visible. Especially for small $L$, the filter functions show similarities to the case of squared soft TSVD; see Remark~\ref{remark:sq_soft_tsvd}. 
The learned filter functions in approximation training are very similar for every noise level, which is in line with the developed theory. In contrast, the filters learned in reconstruction training show stronger regularization (i.e., more dampening of small singular values) for larger noise levels, adapting to the data seen during training. This again fits well within the theoretical results developed in Section~\ref{sec:reco_training} and is especially visible for large $L=L_3$, where the model is, in principle, able to fit the operator well. However, the filter functions for the largest noise $\hat{\delta}_3$ and $L_2, L_3$ look very similar, indicating that the model learns strong regularization from data at the cost of a worse operator approximation.

\begin{figure}
    \centering
    \begin{subfigure}{\textwidth}
        \centering
        \includegraphics[width=.9\textwidth]{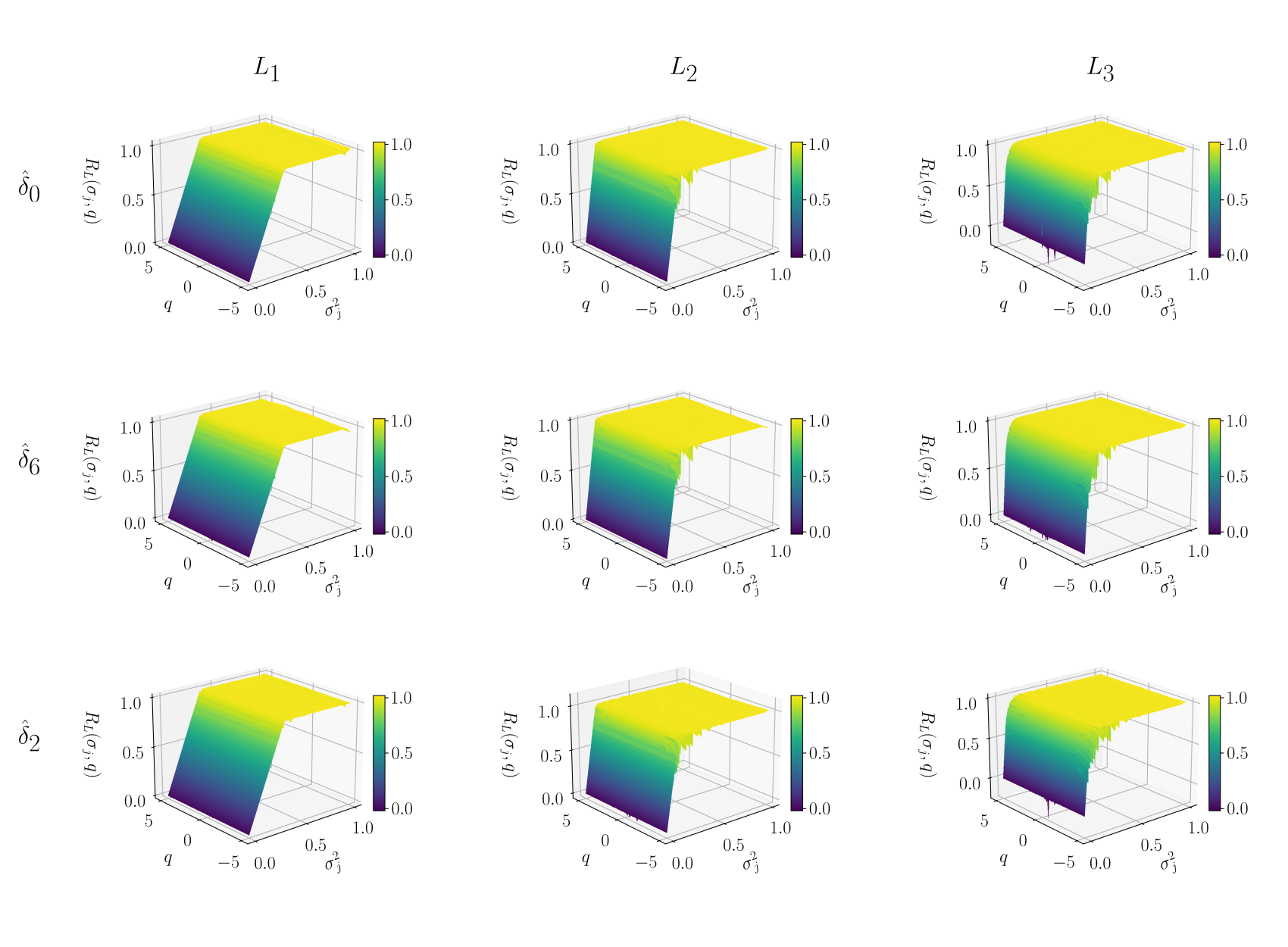}
    \end{subfigure}

    \vspace{-5.0ex}
    \begin{subfigure}{\textwidth}
        \centering
        \includegraphics[width=.9\textwidth]{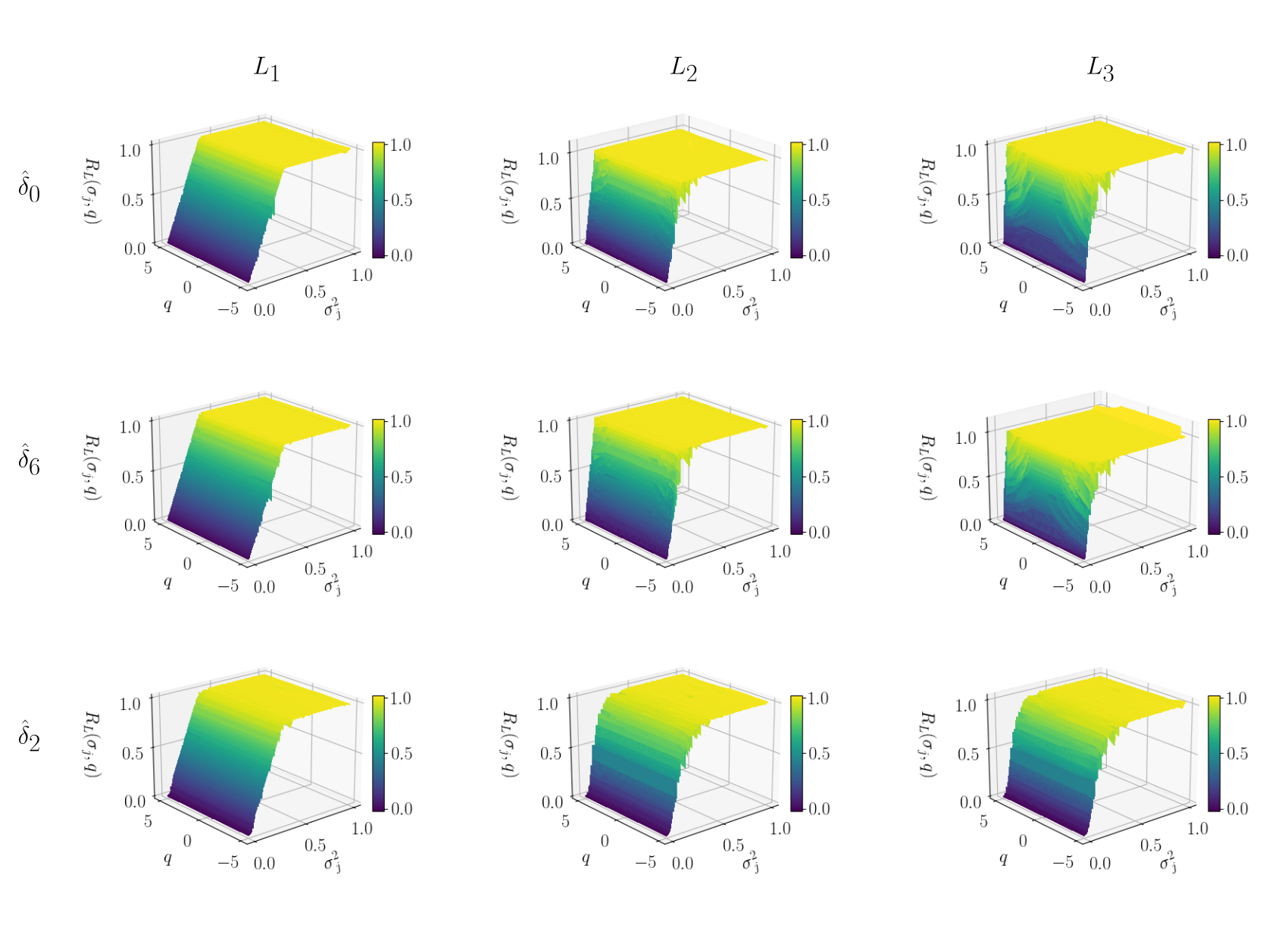}
    \end{subfigure}
    \caption{Filter functions $R_L(\sigma_j,q)$ as defined in~\eqref{eq:filter_function_numeric} corresponding to trained networks $\varphi_{\theta(L_m,\delta_\ell)}$ for $m=1,2,3$~\textit{(columns)} and $\ell = 0,6,2$~\textit{(rows)}, trained via approximation training~\textit{(top)} and via reconstruction training~\textit{(bottom)} on the MNIST dataset for $A = M_a$.}
    \label{fig:surfacefilter_conv_MNIST}
\end{figure}

\subsection{Reconstruction quality and convergence}\label{sec:numerics_convergence}
To compare the performance of the different models in terms of reconstruction quality, we show images and filter functions for a single MNIST digit in Figure~\ref{fig:reconstructions_conv}.

Especially for large noise, the visual quality clearly benefits from the reconstruction training compared to the approximation approach. This supports the argument that additional data dependence may be desirable for large-noise applications. In the same case, the approximation training shows its regularization properties and indicates proper parameter choice rules: The image quality improves for smaller Lipschitz constraints which imply a strong regularization. This behavior is not visible in the reconstruction approach, where a large $L$ generally seems to improve the quality. The filter plots we provide for the given sample in Figure~\ref{fig:reconstructions_conv} (right) also underlined this. In this case, they show a similar graph for $L_2$ and $L_3$. The filter plots also reveal that the models optimized via approximation training are independent of the noise seen during training and therefore learn identical filter functions for all noise levels. Table~\ref{tab:reconstructions_conv} also reveals an advantage of this method for small noise levels.

\begin{figure}
    \centering
    \begin{subfigure}{\textwidth}
        \centering 
         \includegraphics[width=.9\textwidth]{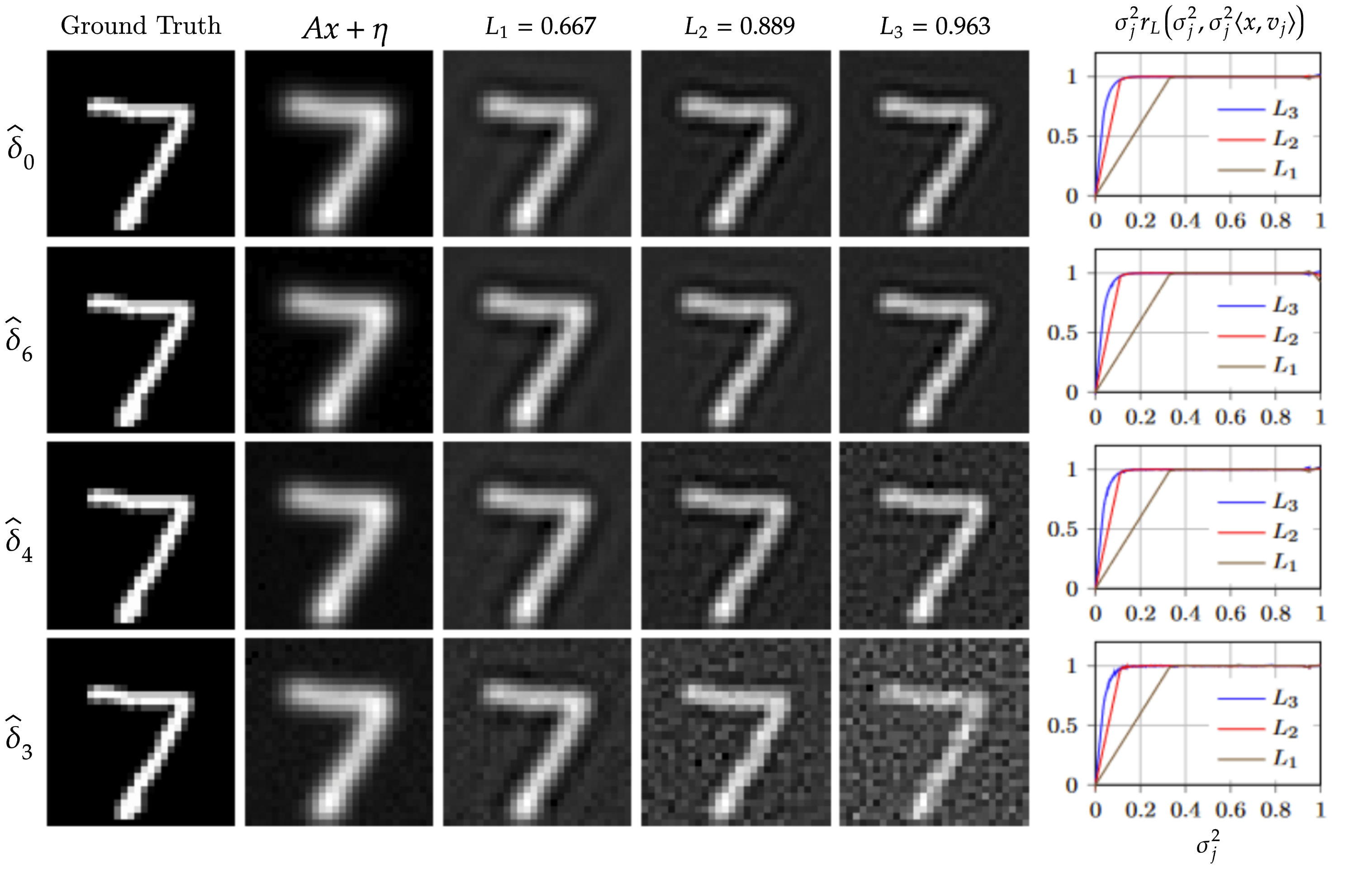}
    \end{subfigure}
    
    \vspace{2.0ex}
    \begin{subfigure}{\textwidth}
        \centering 
         \includegraphics[width=.9\textwidth]{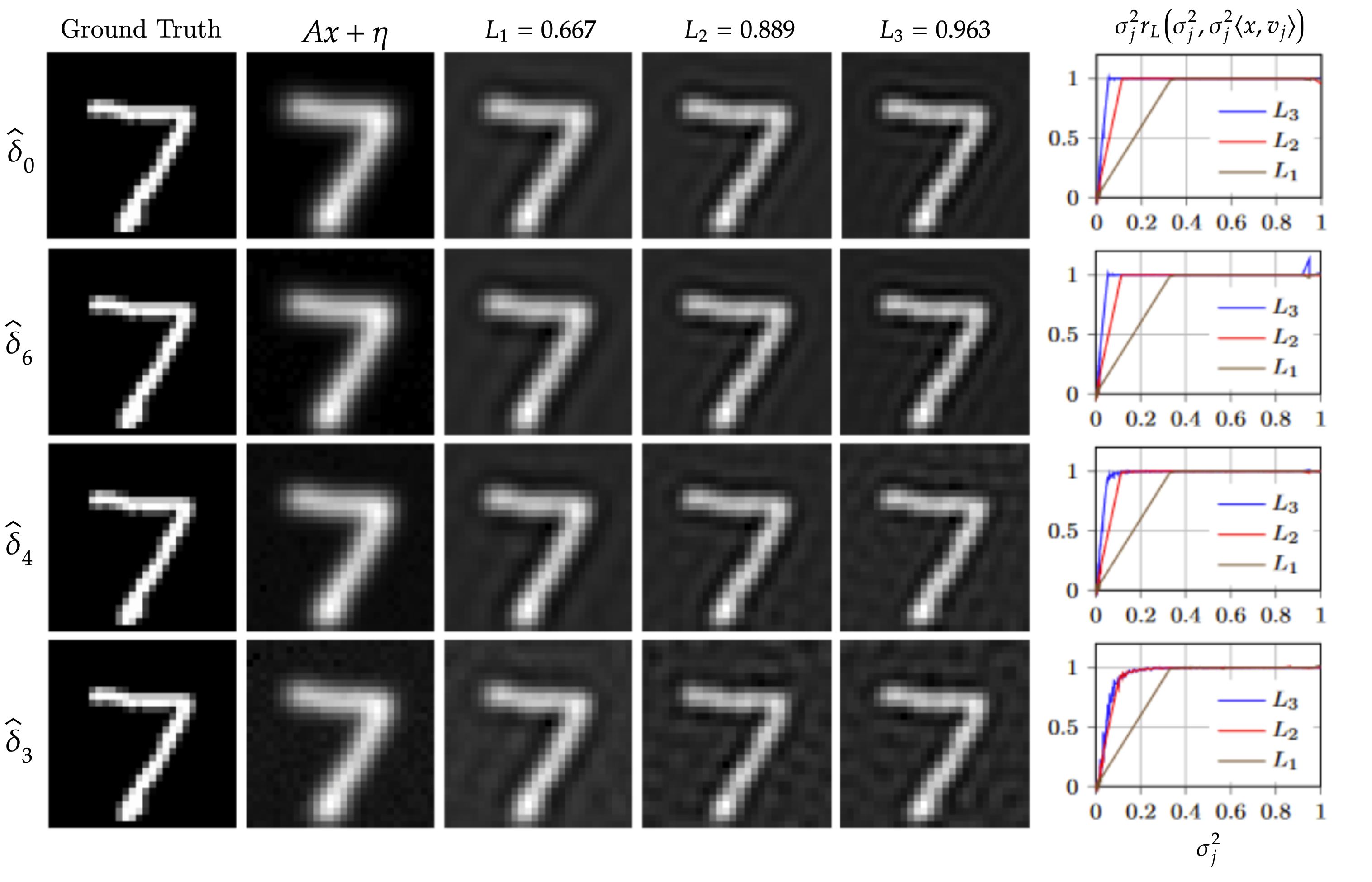}
    \end{subfigure}
    \caption{Reconstructions of an MNIST sample $x =x^{(1)}$ from the test dataset by computing $\varphi_{\theta(L_m,\delta_\ell)}^{-1}(Ax+\tilde{\eta})$ with $\tilde{\eta} \sim \mathcal{N}(0,\delta_\ell \Id)$ for Lipschitz bounds $L_m$ with $m=1,2,3$~\textit{(columns)} and noise levels $\delta_\ell= \hat{\delta}_\ell \cdot \mathrm{std}_\mathrm{MNIST}$ with $\ell =0,6,4,3$~\textit{(rows)} together with corresponding filter functions for $A = M_a$. The top subfigure depicts the reconstructions from networks trained via approximation training, and the bottom subfigure corresponds to the networks optimized via reconstruction training.}
    \label{fig:reconstructions_conv}
\end{figure}

\setlength{\tabcolsep}{0.7em} 
{\renewcommand{\arraystretch}{1.1}
\begin{table}[]
\centering
\begin{tabular}{|c|ccc|ccc|}
\hline
approximation & \multicolumn{3}{c|}{SSIM} & \multicolumn{3}{c|}{MSE} \\ \cline{2-7}
 training & $L_1$    & $L_2$    & $L_3$    & $L_1$    & $L_2$    & $L_3$    \\ \hline
 $\delta_0$                                                                       & 0.8242 & 0.8769 & 0.9035  & 0.0099  & 0.0057 & 0.0042 \\
$\delta_6$                                                                       & 0.8206 & 0.8782 & 0.9039  & 0.0102  & 0.0056 & 0.0042 \\
$\delta_4$                                                                       & 0.8178 & 0.8426 & 0.7863  & 0.0101  & 0.0063 & 0.0072 \\
$\delta_3$                                                                       & 0.7849 & 0.7996 & 0.5738 & 0.0109  & 0.0113 & 0.0299 \\ \hline
reconstruction & \multicolumn{3}{c|}{SSIM} & \multicolumn{3}{c|}{MSE} \\ \cline{2-7}  training & $L_1$    & $L_2$    & $L_3$    & $L_1$    & $L_2$    & $L_3$ \\ \hline
$\delta_0$                                                                       & 0.8238 & 0.8792 & 0.8958  & 0.0101  & 0.0057 & 0.0043 \\
$\delta_6$                                                                       & 0.8213 & 0.8802 & 0.8951  & 0.0103  & 0.0056 & 0.0043 \\
$\delta_4$                                                                       & 0.8181 & 0.8703 & 0.8809  & 0.0104  & 0.0059 & 0.0047 \\
$\delta_3$                                                                       & 0.8030 & 0.8257 & 0.8128 & 0.0106 & 0.0067 & 0.0069 \\
\hline
\end{tabular}
\caption{SSIM and MSE measures corresponding to reconstructions of $x^{(1)}$ in Figure~\ref{fig:reconstructions_conv}.}
\label{tab:reconstructions_conv}
\end{table}}

Another way to study the convergence in $L$ and $\delta$ of the trained models is to evaluate the approximation properties in terms of the overall errors on the dataset with respect to different error measures. To evaluate the localized approximation property that has been introduced in~\cite[Theorem 3.1]{iresnet_01_regtheory}, we define
\begin{align}
    \mathcal{E}_\mathrm{mean}(\varphi_{\theta(L)},A) &= \frac{1}{N} \sum_{i=1}^N \|\varphi_{\theta(L)}(x^{(i)}) - A x^{(i)} \|, \\
    \mathcal{E}_{x^{(m)}}(\varphi_{\theta(L)},A) &= \|\varphi_{\theta(L)}(x^{(m)}) - A x^{(m)} \|,
\end{align}
estimating the approximation error of the trained model for the whole dataset or a single sample.  In Figure~\ref{fig:loc_approx_conv_MNIST}, we plot this error for the models trained without noise and varying $L$. In the case of approximation training, we observe slightly superlinear convergence in the dataset on average, indicating that the property is satisfied for many samples. As proven in prior work \cite{iresnet_01_regtheory}, this implies that the training constructs a convergent regularization scheme for these samples. For reconstruction-based training, this is not fulfilled on average.  To preserve some insights on the convergence of the method, we extend on the weaker but sufficient condition in \cite[Remark~3.2]{iresnet_01_regtheory} and define
\begin{align}
    \widetilde{\mathcal{E}}_\mathrm{mean}(\varphi_{\theta(L)},A) &= \frac{1}{N} \sum_{i=1}^N\|x^{(i)} - \varphi_{\theta(L)}^{-1} (A x^{(i)}) \| \\
     \widetilde{\mathcal{E}}_{x^{(m)}}(\varphi_{\theta(L)},A) &= \|x^{(m)} - \varphi_{\theta(L)}^{-1} (A x^{(m)}) \|,
\end{align}
which is closer to the target in reconstruction training. In this case, $\widetilde{\mathcal{E}}_{x^{(m)}}(\varphi_{\theta(L)},A)\xrightarrow{L \to 1}0$ would be sufficient for local convergence. Figure~\ref{fig:loc_approx_conv_MNIST} indicates that this property can still be satisfied, however, with slow convergence rates.

\begin{figure}
\captionsetup[subfigure]{labelformat=empty}
\begin{subfigure}{\textwidth}
    \centering
    \begin{tikzpicture}
        \pgfplotsset{
        every axis plot/.append style={thick},
        tick style={black, thick},
        every axis plot/.append style={line width=0.8pt},
        every axis/.style={
            axis y line=left,
            axis x line=bottom,
            axis line style={ultra thick,->,>=latex, shorten >=-.4cm}
        },
        }
        \begin{groupplot}[
            group style={
                group name=my plots,
                group size=2 by 1,
                ylabels at=edge left,
                horizontal sep=3.0cm
            },
            scale only axis=true,
            width=0.23\linewidth,
            height=3.0cm,
            axis lines=middle, 
            xlabel = {$1-L$},
            ytick = {0.,5,10,15,20,25,30},
            xtick = {0, 250,500, 750},
            ymin = 0.0,
            ymax = 0.22,
            xmin = 0.0,
             x label style={at={(axis description cs:1.14,0.1)},anchor=north},
             y label style={at={(axis description cs:0.035,1.32)},anchor=north},
        ]
        \nextgroupplot[
            xmode=log,ymode=log, 
            ymin =0.0001, ymax=0.135,
            xmin=0.001, xmax=1,
            axis y line=left,
            ytick={0.001,0.01,0.1},
            xtick={0.001,0.01,0.1,1},
            axis x line=bottom,
            x label style={at={(axis description cs:1.29,0.1)},anchor=north},
            y label style={at={(axis description cs:0.025,1.27)},anchor=north},
            legend style={at={(1.95,1)},draw=none},
            legend cell align={left}
        ]
        \addplot[color=black,dashed, domain=0.001:1]{0.3*x};
        \addplot[color=gray,dashed, domain=0.001:1]{0.3*x^2};
        \addplot[teal,mark=*] table [x=L, y=mean_norm, col sep=comma]{csv_files/lap_conv_approx_pt2.csv};
        \addplot[blue,mark=*] table [x=L, y=x1_norm, col sep=comma]{csv_files/lap_conv_approx_pt2.csv};
        \addplot[red,mark=*] table [x=L, y=x2_norm, col sep=comma]{csv_files/lap_conv_approx_pt2.csv};

        \nextgroupplot[
            xmode=log,ymode=log, 
            ymin =0.0001, ymax=0.135,
            xmin=0.001, xmax=1,
            axis y line=left,
            ytick={0.001,0.01,0.1},
            xtick={0.001,0.01,0.1,1},
            axis x line=bottom,
            x label style={at={(axis description cs:1.29,0.1)},anchor=north},
            y label style={at={(axis description cs:0.025,1.27)},anchor=north},
            legend style={at={(2.02,1.1)},draw=none},
            legend cell align={left}
        ]
        \addplot[color=black,dashed, domain=0.001:1]{0.3*x};
        \addlegendentry{{\footnotesize{$\mathcal{O}\big((1-L)\big)$}}}
        \addplot[color=gray,dashed, domain=0.001:1]{0.3*x^2};
        \addlegendentry{{\footnotesize{$\mathcal{O}\big((1-L)^2\big)$}}}
        \addplot[teal,mark=*] table [x=L, y=mean_norm, col sep=comma]{csv_files/lap_conv_reco_pt2.csv};
        \addlegendentry{{$\mathcal{E}_\mathrm{mean}(\varphi_{\theta(L)},A)$}}
        \addplot[blue,mark=*] table [x=L, y=x1_norm, col sep=comma]{csv_files/lap_conv_reco_pt2.csv};
        \addlegendentry{{$\mathcal{E}_{x^{(1)}}(\varphi_{\theta(L)},A)$}}
        \addplot[red,mark=*] table [x=L, y=x2_norm, col sep=comma]{csv_files/lap_conv_reco_pt2.csv};
        \addlegendentry{{$\mathcal{E}_{x^{(2)}}(\varphi_{\theta(L)},A)$}}
        
        \end{groupplot}
    \end{tikzpicture}
    \end{subfigure}

    \vspace{4.0ex}
 
    \hfill
    \begin{subfigure}[b]{0.145\textwidth}
    \centering

    \vspace{-10.0ex}
    \includegraphics[width=0.75\textwidth]{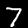}
    \vspace{-1.0ex}
    \caption{\footnotesize{$x^{(1)}$}}

    \phantom{\includegraphics[width=.3\textwidth]{images/test_min.png}}
    \vfill
    \end{subfigure}
    \begin{subfigure}{0.145\textwidth}
    \centering
    \vspace{-3.0ex}
        \includegraphics[width=0.75\textwidth]{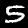}
    \vspace{-1.0ex}
    \caption{\footnotesize{$x^{(2)}$}}
 
    \phantom{\includegraphics[width=.3\textwidth]{images/test_min.png}}
    \end{subfigure}
    \hspace{2.0ex}
    \begin{subfigure}{.59\textwidth}
        \centering
    \begin{tikzpicture}
        \pgfplotsset{
        every axis plot/.append style={thick},
        tick style={black, thick},
        every axis plot/.append style={line width=0.8pt},
        every axis/.style={
            axis y line=left,
            axis x line=bottom,
            axis line style={ultra thick,->,>=latex, shorten >=-.4cm}
        },
        }
        \begin{groupplot}[
            group style={
                group name=my plots,
                group size=1 by 1,
                ylabels at=edge left,
                horizontal sep=1.5cm
            },
            scale only axis=true,
            width=0.39\linewidth,
            height=3.0cm,
            axis lines=middle, 
            xlabel = {$1-L$},
            ytick = {0.,5,10,15,20,25,30},
            xtick = {0, 250,500, 750},
            ymin = 0.0,
            ymax = 0.22,
            xmin = 0.0,
             x label style={at={(axis description cs:1.14,0.1)},anchor=north},
             y label style={at={(axis description cs:0.035,1.32)},anchor=north},
        ]
        
        \nextgroupplot[
                    xmode=log,ymode=log, 
                    ymin =0.04, ymax=0.15,
                    xmin=0.002, xmax=1,
                    axis y line=left,
                    ytick={0.001,0.01,0.1},
                    xtick={0.001,0.01,0.1,1},
                    axis x line=bottom,
                    x label style={at={(axis description cs:1.29,0.1)},anchor=north},
                    y label style={at={(axis description cs:0.025,1.27)},anchor=north},
                    legend style={at={(2.02,1)},draw=none},
                    legend cell align={left}
                ]
                \addplot[teal,mark=*] table [x=L, y=gen_mean_norm, col sep=comma]{csv_files/lap_conv_reco_pt2.csv};
                \addlegendentry{{$\tilde{\mathcal{E}}_\mathrm{mean}(\varphi_{\theta(L)},A)$}}
                \addplot[blue,mark=*] table [x=L, y=x1_gen_norm, col sep=comma]{csv_files/lap_conv_reco_pt2.csv};
                \addlegendentry{{$\tilde{\mathcal{E}}_{x^{(1)}}(\varphi_{\theta(L)},A)$}}
                \addplot[red,mark=*] table [x=L, y=x2_gen_norm, col sep=comma]{csv_files/lap_conv_reco_pt2.csv};
                \addlegendentry{{$\tilde{\mathcal{E}}_{x^{(2)}}(\varphi_{\theta(L)},A)$}}
        \end{groupplot}
    \end{tikzpicture}
    \end{subfigure}
    \caption{Test samples $x^{(1)}$ and $x^{(2)}$~\textit{(bottom left)}. Evaluations of the local approximation property via $\mathcal{E}_\mathrm{mean}(\varphi_{\theta(L_m)},A), \ \mathcal{E}_{x^{(1)}}(\varphi_{\theta(L_m)},A)$ and $\mathcal{E}_{x^{(2)}}(\varphi_{\theta(L_m)},A)$ for the approximation training~\textit{(top left)} and the reconstruction training~\textit{(top right)}, and evaluations of the generalized approximation property via $\tilde{\mathcal{E}}_\mathrm{mean}(\varphi_{\theta(L_m)},A), \tilde{\mathcal{E}}_{x^{(1)}}(\varphi_{\theta(L_m)},A)$ and $\tilde{\mathcal{E}}_{x^{(2)}}(\varphi_{\theta(L_m)},A)$ for the reconstruction training~\textit{(bottom right)} for $L_m = 1-\nicefrac{1}{3^m}$ with $m=1,\hdots,5$, $A = M_a$ on the MNIST test dataset.  
    }

    \label{fig:loc_approx_conv_MNIST}
\end{figure}



\begin{figure}
\centering
\begin{subfigure}{\textwidth}
\centering
\begin{tikzpicture}
\pgfplotsset{
every axis plot/.append style={line width=0.8pt},
xmode = log,
ymode = log,
log y ticks with fixed point/.style={
      yticklabel={
        \pgfkeys{/pgf/fpu=true}
        \pgfmathparse{exp(\tick)}%
        \pgfmathprintnumber[fixed relative, precision=3]{\pgfmathresult}
        \pgfkeys{/pgf/fpu=false}}    
}
}
\begin{groupplot}[
    group style={
        group name=my plots,
        group size=3 by 1,
        ylabels at=edge left,
        xlabels at=edge bottom,
        horizontal sep=2.5cm
    },
    ymode=log,
    scale only axis,
    footnotesize,
    width=3.7cm,
    height=3.7cm,
    tickpos=left,
    x tick label style={/pgf/number format/fixed,
    /pgf/number format/1000 sep = \thinspace},
    ytick align=outside,
    xtick align=outside,
    enlarge x limits=false,
    xmax=0.33,
    grid = both,
    xtick={0,0.0001,0.001,0.01,0.1,1},
    anchor = north,
    clip=true,
    cycle list name=color list
]
\nextgroupplot[ ylabel={$\mathrm{MSE}_\mathrm{reco}^{\delta\ell}(\varphi_{\theta(L,\delta_\ell)},A)$}, ymax = 0.7, ytick ={0.001,0.01,0.1,1}, yticklabels={$10^{-3}$,$ 10^{-2}$,$10^{-1}$}
 ]
\addplot table [x=noise, y=1, col sep=comma]{csv_files/conv_MSEoverNoise_MNIST_approx_reco_noise=True.csv};
\addplot table [x=noise, y=2, col sep=comma]{csv_files/conv_MSEoverNoise_MNIST_approx_reco_noise=True.csv};
\addplot table [x=noise, y=3, col sep=comma]{csv_files/conv_MSEoverNoise_MNIST_approx_reco_noise=True.csv};
\addplot table [x=noise, y=4, col sep=comma]{csv_files/conv_MSEoverNoise_MNIST_approx_reco_noise=True.csv};
\addplot table [x=noise, y=5, col sep=comma]{csv_files/conv_MSEoverNoise_MNIST_approx_reco_noise=True.csv};

  \nextgroupplot[
   legend pos=outer north east,
    legend style={draw=none},
    legend cell align={left},
  xshift=-1.3cm,
  ymax = 0.03, ytick ={0.001,0.01,0.1,1}, yticklabels={$10^{-3}$,$ 10^{-2}$,$10^{-1}$}
  ]
  \addplot table [x=noise, y=1, col sep=comma]{csv_files/conv_MSEoverNoise_MNIST_reco_reco_noise=True.csv};
  \addlegendentry{$L_1$}
 \addplot table [x=noise, y=2, col sep=comma]{csv_files/conv_MSEoverNoise_MNIST_reco_reco_noise=True.csv};
\addlegendentry{$L_2 $}
\addplot table [x=noise, y=3, col sep=comma]{csv_files/conv_MSEoverNoise_MNIST_reco_reco_noise=True.csv};
\addlegendentry{$L_3 $}
\addplot table [x=noise, y=4, col sep=comma]{csv_files/conv_MSEoverNoise_MNIST_reco_reco_noise=True.csv};
\addlegendentry{$L_4$}
 \addplot table [x=noise, y=5, col sep=comma]{csv_files/conv_MSEoverNoise_MNIST_reco_reco_noise=True.csv};
\addlegendentry{$L_5 $}

\nextgroupplot[
     legend pos= north west,
    legend cell align={left},
  ymax = 0.15, ytick ={0.001,0.01,0.1,1}, yticklabels={$10^{-3}$,$ 10^{-2}$,$10^{-1}$}
  ]
\addplot table [x=noise, y=min_mean, col sep=comma]{csv_files/conv_MSEoverNoise_MNIST_approx_reco_noise=True.csv};
\addlegendentry{approximation}
 \addplot table [x=noise, y=min_mean, col sep=comma]{csv_files/conv_MSEoverNoise_MNIST_reco_reco_noise=True.csv};
\addlegendentry{reconstruction}

\end{groupplot}
\end{tikzpicture}
\end{subfigure}

\begin{subfigure}{\textwidth}
\centering
\begin{tikzpicture}
\pgfplotsset{
every axis plot/.append style={line width=0.8pt},
xmode = log,
log y ticks with fixed point/.style={
      yticklabel={
        \pgfkeys{/pgf/fpu=true}
        \pgfmathparse{exp(\tick)}%
        \pgfmathprintnumber[fixed relative, precision=3]{\pgfmathresult}
        \pgfkeys{/pgf/fpu=false}}    
}
}
\begin{groupplot}[
    group style={
        group name=my plots,
        group size=3 by 1,
        ylabels at=edge left,
        xlabels at=edge bottom,
        horizontal sep=2.5cm
    },
    scale only axis,
    footnotesize,
    width=3.7cm,
    height=3.7cm,
    tickpos=left,
    x tick label style={/pgf/number format/fixed,
    /pgf/number format/1000 sep = \thinspace},
    ytick align=outside,
    xtick align=outside,
    enlarge x limits=false,
    xmax=0.33,
    xlabel={$\hat{\delta}_\ell$},
    grid = both,
    xtick={0,0.0001,0.001,0.01,0.1,1},
    anchor = north,
    clip=true,
    cycle list name=color list
]
\nextgroupplot[ ylabel={$\mathrm{SSIM}^{\delta_\ell}(\varphi_{\theta(L,\delta_\ell)},A)$}, ymin=0.15, ymax = 1, ytick ={0.2,0.4,0.6,0.8,1}, yticklabels={$0.2$,$ 0.4$,$0.6$,$0.8$,$1$}
 ]
\addplot table [x=noise, y=1, col sep=comma]{csv_files/conv_SSIM_MNIST_approx_noise=True.csv};
\addplot table [x=noise, y=2, col sep=comma]{csv_files/conv_SSIM_MNIST_approx_noise=True.csv};
\addplot table [x=noise, y=3, col sep=comma]{csv_files/conv_SSIM_MNIST_approx_noise=True.csv};
\addplot table [x=noise, y=4, col sep=comma]{csv_files/conv_SSIM_MNIST_approx_noise=True.csv};
\addplot table [x=noise, y=5, col sep=comma]{csv_files/conv_SSIM_MNIST_approx_noise=True.csv};

  \nextgroupplot[ 
    legend pos=outer north east,
    legend style={draw=none},
    legend cell align={left},
  xshift=-1.3cm,
  ymin=0.6, ymax = 1, ytick ={0.2,0.4,0.6,0.8,1}, yticklabels={$0.2$,$ 0.4$,$0.6$,$0.8$,$1$}
  ]
  \addplot table [x=noise, y=1, col sep=comma]{csv_files/conv_SSIM_MNIST_reco_noise=True.csv};
  \addlegendentry{$L_1$}
 \addplot table [x=noise, y=2, col sep=comma]{csv_files/conv_SSIM_MNIST_reco_noise=True.csv};
\addlegendentry{$L_2 $}
\addplot table [x=noise, y=3, col sep=comma]{csv_files/conv_SSIM_MNIST_reco_noise=True.csv};
\addlegendentry{$L_3 $}
\addplot table [x=noise, y=4, col sep=comma]{csv_files/conv_SSIM_MNIST_reco_noise=True.csv};
\addlegendentry{$L_4$}
 \addplot table [x=noise, y=5, col sep=comma]{csv_files/conv_SSIM_MNIST_reco_noise=True.csv};
\addlegendentry{$L_5 $}

\nextgroupplot[
    legend pos= south west,
    legend cell align={left},
 ymin=0.2, ymax = 1, ytick ={0.2,0.4,0.6,0.8,1}, yticklabels={$0.2$,$ 0.4$,$0.6$,$0.8$,$1$}
  ]
\addplot table [x=noise, y=max_mean, col sep=comma]{csv_files/conv_SSIM_MNIST_approx_noise=True.csv};
\addlegendentry{approximation}
 \addplot table [x=noise, y=max_mean, col sep=comma]{csv_files/conv_SSIM_MNIST_reco_noise=True.csv};
\addlegendentry{reconstruction}

\end{groupplot}
\end{tikzpicture}
\end{subfigure}
\caption{Reconstruction errors $\mathrm{MSE}_\mathrm{reco}^{\delta_\ell}(\varphi_{\theta(L,\delta_\ell)},A)$~\textit{(top row)} and $\mathrm{SSIM}^{\delta_\ell}(\varphi_{\theta(L,\delta_\ell)},A)$~\textit{(bottom row)} for networks trained on noisy samples with noise levels $\delta_\ell$ for $\ell = 0,\hdots,6$ and reconstructions from noisy samples of the same noise level for the approximation training~\textit{(left)} and for the reconstruction training~\textit{(middle)} with Lipschitz bounds $L_m$ on the MNIST dataset for $A = M_a$. Outcomes of optimal parameter choices for both training strategies over different noise levels can be seen on the right-hand side.
}
\label{fig:MSE_SSIM_reco_MNIST_conv}
\end{figure}

In addition, we evaluate the reconstruction error of the training approaches for varying noise levels. Figure~\ref{fig:MSE_SSIM_reco_MNIST_conv} depicts the results for the mean squared error
\begin{align}
    \mathrm{MSE}_\mathrm{reco}^{\delta_\ell}(\varphi_{\theta(L,\delta_\ell)},A) = \frac{1}{N} \sum_{i=1}^N\| x^{(i)}-\varphi_{\theta(L,\delta_\ell)}^{-1}(Ax^{(i)}+\eta^{(i)}) \|^2
\end{align}
and averaged structural similarity index measure~(SSIM) as defined in~\cite{wang2004}, computed for the dataset by 
\begin{align}
    \mathrm{SSIM}^{\delta_\ell}(\varphi_{\theta(L,\delta_\ell)},A) = \frac{1}{N} \sum_{i=1}^N \mathrm{SSIM}(x^{(i)},\varphi_{\theta(L,\delta_\ell)}^{-1}(Ax^{(i)}+\eta^{(i)})).
\end{align}
Here, one can again see that the approximation training comes with a typical parameter choice rule known from regularization theory, where one has to choose $L\to1$ while $\delta\to 0$. In contrast, the reconstruction training performs best with the largest $L$ among all noise levels since the regularization emanates from the data.
This learned regularization behavior also becomes apparent in Figure~\ref{fig:trueL_conv}, which illustrates the actual Lipschitz constant of the learned residual function.
While in the approximation training case, the actual Lipschitz constant $\tilde{L}$ always reaches the constraint one, we can observe a different behavior in the reconstruction training, particularly for larger Lipschitz constants used in the constraint and for larger noise. 
First, with increasing noise levels, we observe a decay in the actual Lipschitz constant. 
This is because the noise distribution $p_H$, which suffers from a larger standard deviation, decreases the slope of the conditional mean $\hat{\psi}$ and thus, for larger singular values, the slope of $\varphi_{\theta(L,\delta_\ell)}^{-1}$ does not need to reach the upper bound $\frac{1}{1-L}$ anymore. 
As a result, the actual Lipschitz constant further decreases until it reaches a minimum and then increases again.  
While the larger singular values cause the first decay, the subsequent increase is caused by small singular values due to an analogous influence of $p_H$ on the conditional mean. 
The larger the noise becomes, the more the slope of $\varphi_{\theta(L,\delta_\ell)}^{-1}$ for small singular values need to reach the lower bound $\frac{1}{1+L}$ to approximate the posterior mean. 
As a result, we observe the increase of the actual Lipschitz constant for large noise. 
A visualization of this behavior can be found in the appendix in Figure~\ref{fig:SW_trueL_conv}, which illustrates the differences in the behavior for small, intermediate, and large singular values.
Overall, the reconstruction training method outperforms the approximation training for all large $\delta$ but stays slightly behind for very small noise.

\begin{figure}
\centering
\begin{tikzpicture}
\pgfplotsset{
every axis plot/.append style={line width=0.8pt},
xmode = log,
ymode = log,
log y ticks with fixed point/.style={
      yticklabel={
        \pgfkeys{/pgf/fpu=true}
        \pgfmathparse{exp(\tick)}%
        \pgfmathprintnumber[fixed relative, precision=3]{\pgfmathresult}
        \pgfkeys{/pgf/fpu=false}}    
}
}
\begin{groupplot}[
    group style={
        group name=my plots,
        group size=2 by 1,
        ylabels at=edge left,
        xlabels at=edge bottom,
        horizontal sep=1.8cm
    },
    scale only axis,
    footnotesize,
    width=5.3cm,
    height=4.2cm,
    tickpos=left,
    x tick label style={/pgf/number format/fixed,
    /pgf/number format/1000 sep = \thinspace},
    ytick align=outside,
    xtick align=outside,
    enlarge x limits=false,
    xmax=0.33,
    xlabel={$\hat{\delta}_\ell$},
    grid = both,
    xtick={0,0.0001,0.001,0.01,0.1,1},
    anchor = north,
    clip=true,
    cycle list name=color list
]
\nextgroupplot[ 
ymin=0.65, ymax = 1.07, ytick ={0.7,0.8,0.9,1.0}, yticklabels={$0.7$,$0.8$,$0.9$,$1.0$}
 ]
\addplot table [x=noise, y=1, col sep=comma]{csv_files/conv_trueL_MNIST_approx_noise=True.csv};
\addplot table [x=noise, y=2, col sep=comma]{csv_files/conv_trueL_MNIST_approx_noise=True.csv};
\addplot table [x=noise, y=3, col sep=comma]{csv_files/conv_trueL_MNIST_approx_noise=True.csv};
\addplot table [x=noise, y=4, col sep=comma]{csv_files/conv_trueL_MNIST_approx_noise=True.csv};
\addplot table [x=noise, y=5, col sep=comma]{csv_files/conv_trueL_MNIST_approx_noise=True.csv};
\addplot[color=red, dashed, domain=0.009:1]{0.667};
\addplot[color=blue, dashed, domain=0.009:1]{0.889};
\addplot[color=black, dashed, domain=0.009:1]{0.963};
\addplot[color=yellow, dashed, domain=0.009:1]{0.988};
\addplot[color=brown, dashed, domain=0.009:1]{0.996};

\nextgroupplot[ 
ymin=0.65, ymax = 1.07, ytick ={0.7,0.8,0.9,1.0}, yticklabels={$0.7$,$0.8$,$0.9$,$1.0$},legend pos=outer north east,
    legend style={draw=none},
    legend columns=2,legend style={
            /tikz/column 2/.style={
                column sep=5pt,
            },},
    legend cell align={left}
 ]
\addplot[red] table [x=noise, y=1, col sep=comma]{csv_files/conv_trueL_MNIST_reco_noise=True.csv};
\addlegendentry{$\tilde{L}_1$}
\addplot[color=red, dashed, domain=0.009:1]{0.667};
\addlegendentry{$L_1$}
\addplot[blue] table [x=noise, y=2, col sep=comma]{csv_files/conv_trueL_MNIST_reco_noise=True.csv};
\addlegendentry{$\tilde{L}_2$}
\addplot[color=blue, dashed, domain=0.009:1]{0.889};
\addlegendentry{$L_2$}
\addplot[black] table [x=noise, y=3, col sep=comma]{csv_files/conv_trueL_MNIST_reco_noise=True.csv};
\addlegendentry{$\tilde{L}_3$}
\addplot[color=black, dashed, domain=0.009:1]{0.963};
\addlegendentry{$L_3$}
\addplot[yellow] table [x=noise, y=4, col sep=comma]{csv_files/conv_trueL_MNIST_reco_noise=True.csv};
\addlegendentry{$\tilde{L}_4$}
\addplot[color=yellow, dashed, domain=0.009:1]{0.988};
\addlegendentry{$L_4$}
\addplot[brown] table [color=brown, x=noise, y=5, col sep=comma]{csv_files/conv_trueL_MNIST_reco_noise=True.csv};
\addlegendentry{$\tilde{L}_5$}
\addplot[color=brown,dashed, domain=0.009:1]{0.996};
\addlegendentry{$L_5$}
\end{groupplot}
\end{tikzpicture}
\caption{Lipschitz constraints $L_m$ of networks $\varphi_{\theta(L_m,\delta_\ell)}$ together with respective computed Lipschitz constants $\tilde{L}_m$ of the trained residual functions $f_{\theta(L_m,\delta_\ell)}$ at $m=1,\hdots,5$ for noise levels $\delta_\ell$ with $\ell = 0,\hdots,6$ via approximation training~\textit{(left)} and reconstruction training~\textit{(right)} on the MNIST dataset for $A =M_a$.
}
\label{fig:trueL_conv}
\end{figure}
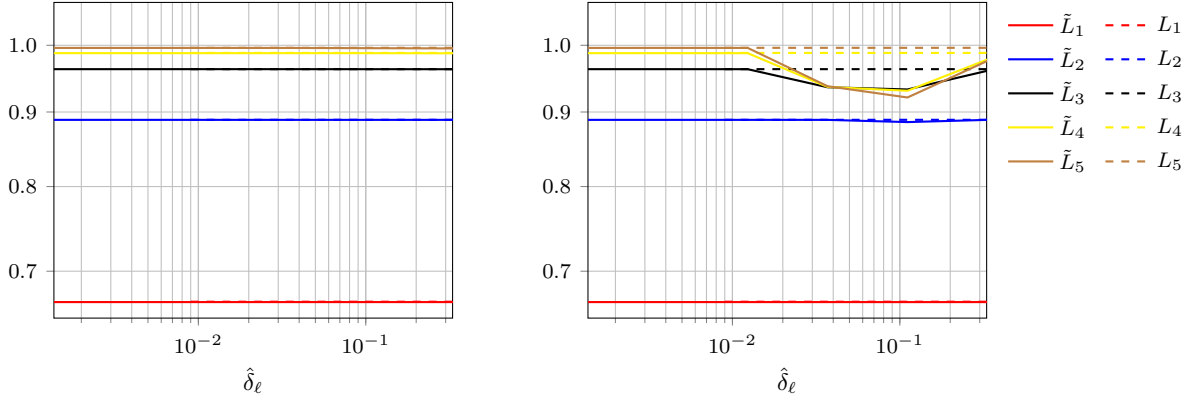

\section{Discussion and outlook}
\label{sec:discussion}

The present work can be seen as a continuation of \cite{iresnet_01_regtheory}. There, the authors investigated regularization properties of the proposed iResNet reconstruction approach for specific network architectures trained according to the approximation training on samples to impart data dependency. Here, we have extended the theory by focusing on the question of to what extent the training data distribution influences the optimal parameters of the iResNet and, in turn, the resulting reconstruction scheme. To this end, we considered the training of the iResNets from a Bayesian perspective and focused on two different loss functions, namely the approximation training and the reconstruction training.

In the approximation training, our results for the diagonal architecture show that for all possible prior and noise distributions, the best suited residual function $f$ is an affine linear one whose optimal parameters depend on the mean of the prior distribution, the eigenvalue $\sigma_j^2$ and the Lipschitz constant $L < 1$. Thus, in this setting, the data dependency on the training outcome is minimal, with no influence of the noise distribution and, especially, the regularization properties of the resulting reconstruction scheme. Instead, the amount of regularization of $\varphi_\theta^{-1}$ is solely controlled by the Lipschitz constant $L$, which also becomes apparent by the observed equivalence of $\varphi_\theta^{-1}$ to a convergent filter-based regularization scheme with bias.

In contrast, the prior and noise distributions significantly impact the optimal architecture and parameters when considering the reconstruction training. Here, we realize the network by an iResNet's inverse and train it to approximate $A^{-1}$, resulting in a stable reconstruction scheme. We showed that we can interpret the optimal network as an approximation of the conditional mean estimator $z^\delta \mapsto \mathbb{E}(x | z^\delta)$ w.r.t.\ the $p_Z$-weighted $L^2$-norm, where $p_Z$ is the density function of $Ax + \eta$. Hence the optimal architecture choice and the corresponding optimal parameters depend on the prior and the noise distribution. Consequently, this indicates that the amount of regularization of the trained network is controlled by the Lipschitz constant $L$ and possibly by the amount of noise in the training data. 

The theoretical findings are validated and further corroborated by a series of numerical investigations on the MNIST dataset and an artificially generated dataset following a bimodal Gaussian distribution for two different forward operators. In particular, the results highlight that in reconstruction training, the noise distribution influences the regularization properties of the network. In the approximation training, the noise does not influence the regularization properties; they are solely controlled by the Lipschitz constant $L$. As a result, the reconstruction training leads to superior regularizations in high noise regimes, whereas the approximation training is more suitable in low noise regimes due to better convergence properties.

These investigations of the approximation and reconstruction training illustrate how the loss function determines the influence of the prior and noise distribution on the reconstruction scheme and shed light on which architectures are suitable. 
Investigating a link to MAP estimation and how it could be represented in terms of an iResNet might allow for revealing further links to regularization theory in future works (see also a more detailed discussion in Appendix~\ref{app:MAP_resnet_Bayesian}). 


The presented results allow further investigations and can serve as a foundation for future research directions. 
In the approximation training case, it might be desirable to relax the naive Bayes assumption and to consider the general non-diagonal network case \eqref{eq:approx_general_network}. In line with the found limited data dependency in the diagonal case, we conjecture a dependency on second moments of the prior distribution $p_X$ at most.
In the general network case, we showed that reconstruction training leads to a data-dependent and stable reconstruction scheme that approximates the mean of the posterior distribution, where the degree of stability can be controlled by the Lipschitz constant $L$. What is left to prove is a convergence property as discussed in Remark~\ref{rem:convergence reco}, which could, in principle, further manifest the superiority of the reconstruction training approach and provide additional guarantees. Here, potential generalizations also could incorporate alternative loss functions in the integrand of the reconstruction training loss, which might result in approximations of alternative estimators induced by Bayes costs \cite{kaipio2006statistical}. In this context, as a starting point one may limit oneself to linear estimators to further investigate the relation to learned MAP estimators, respectively Tikhonov regularization, as in \cite{kabri2022convergent, alberti2021learning}.
In order to obtain convergence guarantees as well as data dependence, when desirable, one can explore a noise-controlled convex combination of both training losses. Theoretical as well as numerical investigations in this direction remain future research.

In addition, Remark~\ref{rem:inverse_of_ires_is_ires} serves as a basis to improve the numerical implementation of the reconstruction training by constructing the network as a scaled iResNet resulting in a more efficient training approach as mentioned in Section~\ref{sec:numeric_experiments}. This is a reasonable approach when one is not interested in directly comparing the approximation and reconstruction training. Numerical investigations, including the general non-diagonal network architecture, remain immediate future research.

Besides these improvements, extending our results to deeper network architectures could be beneficial, e.g., a concatenation of iResNets. This would allow for more expressive network architectures and further improve the reconstruction quality of the networks. Finally, it might be worthwhile to generalize the results to nonlinear inverse problems allowing for an application to a larger number of operators.

\section*{Acknowledgments}
N. Heilenk\"otter, M. Iske, and J. Nickel acknowledge the support of the Deutsche Forschungsgemeinschaft (DFG, German
Research Foundation) - Project number 281474342/GRK2224/2. T. Kluth acknowledges support from the DELETO project funded by the Federal Ministry of Education and Research (BMBF, project number 05M20LBB).

\appendix
\section{Proof of Lemma~\ref{lem:solution_reco_train_example}}
\label{appendix:proof_solution_type}

\begin{proof} 
Due to (iii) it immediately follows that $p_X$ is uniformly continuous. From (iii), we also deduce that the marginal of $p_X$ on $\mathcal{N}(A)^\bot$, $p_{X,\mathcal{N}(A)^{\bot}}: \mathcal{N}(A)^\bot \to \R_{\geq 0}$ with 
     \begin{equation}
       p_{X,\mathcal{N}(A)^{\bot}}(x)= \int_{\mathcal{N}(A)}  p_X(x_0 + x) \ \mathrm{d} x_0  
     \end{equation} 
     is compactly supported and bounded and thus also uniformly continuous.
     Analogously, we can deduce uniform continuity of the mappings $g_0: \mathcal{N}(A)^\bot  \to \mathcal{N}(A)$ and $g_\dagger: \mathcal{N}(A)^\bot  \to \mathcal{N}(A)^\bot$ with
          \begin{equation}
       g_0(x)= \int_{\mathcal{N}(A)}  p_X(x_0 + x) x_0 \ \mathrm{d} x_0  
     \end{equation} 
     and
               \begin{equation}
       g_\dagger(x)= \int_{\mathcal{N}(A)}  p_X(x_0 + x) \ \mathrm{d} x_0 \  x . 
     \end{equation}

Next, by using $X=\mathcal{N}(A) \oplus \mathcal{N}(A)^\bot$ we now observe that for arbitrary $z\in X$ due to (i) it holds
\begin{align}
    p_{Z,\delta}(z)&= \int_X p_{H,\delta}(z-Ax) p_X(x) \ \mathrm{d} x \ \notag \\
    &= \int_{\mathcal{N}(A)^\bot} p_{H,\delta}(z-Ax_1)   \int_{\mathcal{N}(A)} p_X(x_0+x_1)  \ \mathrm{d} x_0  \ \mathrm{d} x_1 \notag \\
    &= p^0_{H,\delta}(P_{\mathcal{N}(A)}z)  \int_{\mathcal{R}(A) }  p^\dagger_{H,\delta}(P_{\mathcal{N}(A)^\bot}z-y_1)   \int_{\mathcal{N}(A)} p_X(x_0+A^\dagger y_1)  \ \mathrm{d} x_0  \ \mathrm{d} y_1 |\det A^\dagger |,
\end{align}
where the transformation $x_1=A^\dagger y_1$, with generalized inverse $A^\dagger: \mathcal{R}(A) \to \mathcal{N}(A)^\bot $, is used in the last equality.
Analogously, we further obtain for arbitrary $z\in \mathrm{supp}(p_{Z,\delta})\subset X$
\begin{align}
    \hat{\psi}_{\delta}(z)&= \frac{1}{p_{Z,\delta}(z)}  \int_X p_{H,\delta}(z-Ax) p_X(x) x \ \mathrm{d} x \ \notag \\
    &= \frac{1}{p_{Z,\delta}(z)} \int_{\mathcal{N}(A)^\bot} p_{H,\delta}(z-Ax_1)   \int_{\mathcal{N}(A)} p_X(x_0+x_1) (x_0 + x_1)  \ \mathrm{d} x_0  \ \mathrm{d} x_1 \notag \\
    &= \frac{p^0_{H,\delta}(P_{\mathcal{N}(A)}z) }{p_{Z,\delta}(z)}   \int_{\mathcal{R}(A) }  p^\dagger_{H,\delta}(P_{\mathcal{N}(A)^\bot}z-y_1)    \int_{\mathcal{N}(A)} p_X(x_0+A^\dagger y_1) (x_0 + A^\dagger y_1)  \ \mathrm{d} x_0  \ \mathrm{d} y_1 |\det A^\dagger |.
\end{align}
Exploiting the previously derived  representation of $p_{Z,\delta}$ and (i)  yields
\begin{align}
    \hat{\psi}_\delta (z) & = \frac{\int_{\mathcal{R}(A)} p^\dagger_{H,\delta} ( P_{\mathcal{N}(A)^\bot} z - y_1 ) \left( \int_{\mathcal{N}(A)}  p_X(x_0+A^\dagger y_1 ) (x_0 + A^\dagger y_1)  \ \mathrm{d} x_0  \right) \ \mathrm{d} y_1}{\int_{\mathcal{R}(A)} p^\dagger_{H,\delta} ( P_{\mathcal{N}(A)^\bot} z - y_1 )\int_{\mathcal{N}(A)}  p_X(x_0+A^\dagger y_1)   \ \mathrm{d} x_0 \ \mathrm{d} y_1 } \notag \\
    &= \frac{\int_{\mathcal{R}(A)} p^\dagger_{H,\delta} ( P_{\mathcal{N}(A)^\bot} z - y_1 ) \left( g_0(A^\dagger y_1) + g_\dagger(A^\dagger y_1)  \right) \ \mathrm{d} y_1}{\int_{\mathcal{R}(A)} p^\dagger_{H,\delta} ( P_{\mathcal{N}(A)^\bot} z - y_1 )  p_{X,\mathcal{N}(A)^{\bot}}(A^\dagger y_1) \ \mathrm{d} y_1}.    \label{eq:psi_delta_aux1}
\end{align}
We now consider the denominator and nominator separately. 
In the denominator we first observe that the mapping $p_{X,\mathcal{N}(A)^\bot}(A^\dagger y_1)$ 
is also uniformly continuous due to the continuity of $A^\dagger$.
Exploiting $\mathcal{R}(A)=\mathcal{R}(A^\ast) = \mathcal{N}(A)^{\bot}$, the approximation property \cite[Sec II]{konigsberger2013analysis} of the Dirac sequence $p_{H,\delta}^\dagger$ delivers uniform convergence of the denominator, i.e., this implies pointwise convergence such that for any $z \in X$ it holds 
\begin{equation}
    \int_{\mathcal{R}(A)} p^\dagger_{H,\delta} ( P_{\mathcal{N}(A)^\bot} z - y_1 ) p_{X,\mathcal{N}(A)^\bot} ( A^\dagger y_1 ) \ \mathrm{d} y_1  \underset{\delta \to 0}{ \longrightarrow} p_{X,\mathcal{N}(A)^\bot} ( A^\dagger P_{\mathcal{N}(A)^\bot} z ). 
\end{equation}
Analogous arguments apply to the nominator of \eqref{eq:psi_delta_aux1}
by exploiting the approximation property of the Dirac sequence implying uniform convergence and thus pointwise convergence such that for any $z \in X$ it holds
\begin{equation}
    \int_{\mathcal{R}(A)} p^\dagger_{H,\delta} ( P_{\mathcal{N}(A)^\bot} z - y_1 ) \left( g_0(A^\dagger y_1) + g_\dagger(A^\dagger y_1)   \right) \ \mathrm{d} y_1 \underset{\delta \to 0}{ \longrightarrow} g_0(A^\dagger P_{\mathcal{N}(A)^\bot} z)+  g_\dagger(A^\dagger P_{\mathcal{N}(A)^\bot} z). 
\end{equation}

Due to (iv) for any fixed $z\in \mathcal{R}_{p_X}(A)$ the representation of $\hat{\psi}_\delta$ in \eqref{eq:psi_delta_aux1} is well-defined for sufficiently small $\delta$. 
Consequently, we have convergent sequences in the nominator and in the denominator such that the quotient converges by standard sequence arguments. As $\mathcal{R}_{p_X}(A)\subset \mathcal{R}(A)=\mathcal{N}(A)^\bot$ we thus obtain the desired pointwise convergence
\begin{equation}
\hat{\psi}_\delta(z) \underset{\delta \to 0}{ \longrightarrow} \frac{g_\dagger(A^\dagger z) + g_0(A^\dagger z)}{p_{X,\mathcal{N}(A)^{\bot}}(A^\dagger z)} = A^\dagger z + \int_{\mathcal{N}(A)}  \frac{p_X(x_0 + A^\dagger z)}{\int_{\mathcal{N}(A)} p_X(x'_0 + A^\dagger z)\ \mathrm{d} x'_0 } x_0 \ \mathrm{d} x_0. 
\end{equation}

\end{proof}

\section{MAP estimation and its interpretation as an iResNet}
\label{app:MAP_resnet_Bayesian}

Consider a linear inverse problem $\tilde{A} x = y$ with $\tilde{A} \in L(X,Y)$ as in Remark \ref{rem:general_inverse_problem}, where noisy data $y^\delta \in Y$ is given.
One established approach in Bayesian inverse problems is the so-called MAP (maximum a posteriori) estimator
\begin{equation}
    x_{\text{MAP}} = \mathrm{arg} \max_{x \in X} p(x | y^\delta).
\end{equation}
The posterior density $p(x|y^\delta)$ can be derived via Bayes rule from the pdf's of the prior ($x \sim p_X$), the noise ($y^\delta - Ax \sim \tilde{p}_H$) and the data ($y^\delta \sim p_Y$).
Using this and the monotonicity of the logarithm, one obtains
\begin{align}
    & & x_{\text{MAP}} &= \mathrm{arg} \max_{x \in X} \frac{\tilde{p}_H(y^\delta - \tilde{A} x) p_X(x)}{p_Y(y^\delta)} \notag \\
    &\Leftrightarrow & x_{\text{MAP}} &= \mathrm{arg} \max_{x \in X} \tilde{p}_H(y^\delta - \tilde{A}x) p_X(x) \notag \\
    &\Leftrightarrow & x_{\text{MAP}} &= \mathrm{arg} \min_{x \in X} - \log(\tilde{p}_H(y^\delta - \tilde{A}x))  - \log(p_X(x)).
\end{align}
Here, one can observe a well-known similarity to variational regularization schemes. In the case of Gaussian noise with noise level $\delta > 0$, i.e.,
\begin{equation}
    \tilde{p}_H(\tilde{\eta}) \propto \exp \left( -\frac{1}{2 \delta^2} \|\tilde{\eta}\|^2 \right)
\end{equation}
it holds
\begin{equation}
    x_{\text{MAP}} = \mathrm{arg} \min_{x \in X} - \frac{1}{2} \|\tilde{A}x - y^\delta\|^2  - \delta^2 \log(p_X(x)).
\end{equation}

The negative log-likelihood (NLL) $- \log p_X$ can be interpreted as a penalty term, weighted with the squared noise level $\delta^2$.
If $- \log p_X$ is differentiable, we can use the first-order optimality condition and derive
\begin{align}
    & & 0 &= \tilde{A}^* (\tilde{A} x_{\text{MAP}} - y^\delta) - \delta^2 \partial ( \log p_X )(x_{\text{MAP}}) \notag \\
    &\Rightarrow & \tilde{A}^* y^\delta &= \tilde{A}^* \tilde{A} x_{\text{MAP}} - \delta^2 \partial ( \log p_X )(x_{\text{MAP}}) \notag \\
   &\Rightarrow & x_{\text{MAP}} &= \left(\Id - \left( \Id - \tilde{A}^* \tilde{A} x_{\text{MAP}} + \delta^2 \partial ( \log p_X ) \right) \right)^{-1}(\tilde{A}^* y^\delta), \label{eq:MAP_resnet}
\end{align}
where the last implication only holds if $\tilde{A}^* \tilde{A} - \delta^2 \partial (\log p_X)$ is invertible (which is guaranteed, e.g., in case of a convex NLL).

Now, we can interpret \eqref{eq:MAP_resnet} as an iResNet approach for solving the inverse problem
\begin{equation}
    Ax = z
\end{equation}
where $A = \tilde{A}^* \tilde{A}$ and $z = \tilde{A}^* y$. It holds $x_{\text{MAP}} = \varphi_\theta^{-1}(z^\delta) = (\Id - f_\theta)^{-1} (\tilde{A}^* y^\delta)$ if the residual layer is given by
\begin{equation}
    f_\theta = \Id - A + \delta^2 \partial (\log p_X).
\end{equation}
Thus, MAP estimation with an iResNet is possible as long as the above-defined $f_\theta$ has a Lipschitz constant of at most $L < 1$.

We can derive conditions for the prior $p_X$ and the noise level $\delta$ from this Lipschitz constraint by making use of Assumption \ref{ass:independence}, i.e., stochastic independence of the components $x_j \sim p_{X,j}$ and the eigendecomposition of $A$. In this setting, the components can be handled separately and, thus, 
\begin{equation}
    f_{\theta, j} = (1-\sigma_j^2)\, \Id + \delta^2 \partial (\log p_{X,j}).
\end{equation}
To obtain further insights, we distinguish between large eigenvalues (i.e., $\sigma_j^2 > 1-L$) and small ones (i.e., $\sigma_j^2 \leq 1-L$).

\begin{remark}\label{rem:MAP Lipschitz}
    The prior $p_{X, j}$ corresponding to a large eigenvalue can have a rather arbitrary character. The only important property is that the derivative of the NLL (i.e., $\partial (\log p_{X,j})$) must be Lipschitz continuous. Because in this case, for small enough $\delta$, it holds
    \begin{equation}
        \mathrm{Lip}(f_{\theta,j}) \leq (1-\sigma^2) + \delta^2 \mathrm{Lip}( \partial (\log p_{X,j}) ) \leq L.
    \end{equation}
    This is visualized in the left plot of Figure \ref{fig:MAP_resnet}.

    However, for smaller eigenvalues, it holds  $1-\sigma^2 \geq L$. Hence, the prior must decrease the slope of $f_{\theta,j}$. This holds true if the NLL of the prior is convex (or even strongly convex). Because then, $\partial (\log p_{X,j})$ is monotonously decreasing and $\delta$ can again be chosen s.t.\
    \begin{equation}
        \mathrm{Lip}(f_{\theta,j}) = \mathrm{Lip}((1 - \sigma^2) \Id + \partial (\log p_{X,j})) \leq L
    \end{equation}
    holds. An example for such a $p_{X,j}$ is a Gaussian prior, where $\partial (\log p_{X,j})$ is a linear function with a negative slope (see the right plot of Figure \ref{fig:MAP_resnet} and the subsequent derivations).
\end{remark}

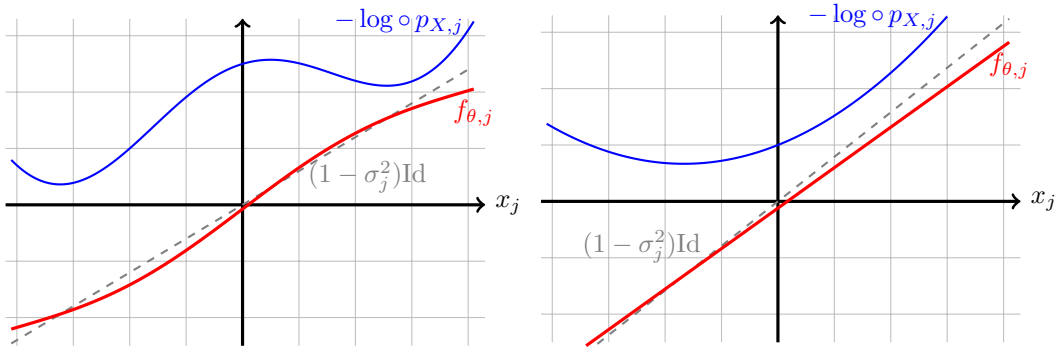
\begin{figure}[ht]
\centering
\begin{tikzpicture}[scale=0.75]
  \draw[step=1.0, lightgray, thin] (-4.2,-2.5) grid (4.3,3.3);
  \draw[->, thick, very thick] (-4.2, 0) -- (4.3, 0) node[right] {$x_j$};
  \draw[->, thick, very thick] (0, -2.5) -- (0, 3.3);
  \draw[scale=1, domain=-4.1:4.1, variable=\t, gray, thick, dashed] plot[samples=161] ({\t}, {(1-0.4)*\t});
    \draw[scale=1, domain=-4.1:4.1, variable=\t, blue, thick] plot[samples=161] ({\t}, {0.15*\t*\t +0.3*\t + 1 + 1.5*cos(45*\t)}) node[left] {$-\log \circ \, p_{X,j}$};
\draw[gray] (1,0.5) node[right] {$(1-\sigma_j^2) \Id$};
\draw[scale=1, domain=-4.1:4.1, variable=\t, red, very thick] plot[samples=161] ({\t}, {(1-0.4)*\t - 0.25*(0.3*\t + 0.3 -1.5*3.14159/4*sin(45*\t))}) node[below] {$f_{\theta,j}$};
\end{tikzpicture}
\begin{tikzpicture}[scale=0.75]
  \draw[step=1.0, lightgray, thin] (-4.2,-2.5) grid (4.3,3.3);
  \draw[->, thick, very thick] (-4.2, 0) -- (4.3, 0) node[right] {$x_j$};
  \draw[->, thick, very thick] (0, -2.5) -- (0, 3.3);
  \draw[scale=1, domain=-3.2:4.1, variable=\t, gray, thick, dashed] plot[samples=161] ({\t}, {(1-0.21)*\t});
    \draw[scale=1, domain=-4.1:3, variable=\t, blue, thick] plot[samples=161] ({\t}, {0.12*\t*\t +0.4*\t + 1}) node[left] {$-\log \circ \, p_{X,j}$};
\draw[scale=1, domain=-3.4:4.1, variable=\t, red, very thick] plot[samples=161] ({\t}, {(1-0.21)*\t - 0.3*(0.24*\t + 0.4)}) node[below] {$f_{\theta, j}$};
\draw[gray] (-1.2,-0.8) node[left] {$(1-\sigma_j^2) \Id$};
\end{tikzpicture}
\caption{MAP estimation with iResNet. For large singular values $\sigma_j^2 > 1-L$, the NLL is not very restricted (can be nonconvex) to guarantee $\mathrm{Lip}(f_{\theta, j}) \leq L < 1$. For small singular values $\sigma_j^2 \leq 1-L$, the NLL has to be (strongly) convex to guarantee that the slope of $f_{\theta, j}$ is smaller than $1-\sigma_j^2$ (and smaller than $L$).}
\label{fig:MAP_resnet}
\end{figure}

\paragraph{Example prior distributions:}
To exemplify the previous observations, let us look at two commonly used prior distributions, namely the Gaussian distribution and the Laplace distribution. In the case of the Gaussian distribution we have 
\begin{equation}
    p_{X,j}(x) = \frac{1}{\sqrt{2\pi b_j^2}} \exp{\left(-\frac{(x-\mu_j)^2}{2b_j^2}\right)}
\end{equation}
for all $j\in\N$ with mean $\mu_j\in\R$ and variance $b_j^2>0$. Consequently, for the residual layer we obtain
\begin{equation}
    f_j(x) = \left(1 - \sigma_j^2 - \frac{\delta^2}{b_j^2}\right) x + \frac{\delta^2 \mu_j}{b_j^2} \quad \text{for } x\in\R
\end{equation}
with Lipschitz constant $\text{Lip}(f_j) = \vert 1 - \sigma_j^2 - \nicefrac{\delta^2}{b_j^2} \vert$. Hence, similar to the observations in the previous remark, for singular values with $1-\sigma_j^2 < L$ the Lipschitz constraint $\text{Lip}(f_j) \leq L$ is fulfilled for all $\delta$ and $b_j$. In the case $1-\sigma_j^2 \geq L$, $\delta$ and $b_j$ need to satisfy $\nicefrac{\delta^2}{b_j^2} \geq 1-\sigma_j^2-L$ to guarantee $\text{Lip}(f_j) \leq L$. \\
In the case of the prior distribution being a Laplacian, i.e.\
\begin{equation}
    p_{X,j}(x) = \frac{1}{\sqrt{2 b_j}} \exp{\left(-\frac{\vert x-\mu_j \vert}{b_j}\right)} \quad \text{for } x\in\R
\end{equation}
with mean $\mu_j\in\R$ and variance $2b_j^2>0$, the subgradient of $\log p_{X,j} $ is given by
\begin{equation}
    \partial (\log p_{X,j})(x) = \begin{cases}
        \frac{1}{b_j} & x < \mu_j \\
        [-\nicefrac{1}{b_j}, \nicefrac{1}{b_j}] & x = \mu_j \\
        -\frac{1}{b_j} & x>\mu_j
    \end{cases}  \quad \text{for } x\in\R.
\end{equation}
Consequently, the residual layer for the Laplacian prior distribution is given by 
\begin{equation}
    f_j(x) = \begin{cases}
        (1-\sigma_j^2)\,x+\nicefrac{\delta^2}{b_j} & x < \mu_j \\
        [(1-\sigma_j^2)\,x-\nicefrac{\delta^2}{b_j}, (1-\sigma_j^2)x+\nicefrac{\delta^2}{b_j}] & x = \mu_j \\
        (1-\sigma_j^2)\,x-\frac{\delta^2}{b_j} & x>\mu_j
    \end{cases}  \quad \text{for } x\in\R,
\end{equation}
which is not Lipschitz-continuous. As a result, the Laplace distribution does not satisfy the conditions of Remark~\ref{rem:MAP Lipschitz}.

In summary, the previous considerations illustrate that MAP estimation with a Gaussian noise model can also be represented by the proposed iResNet approach for certain prior distributions guaranteeing invertibility. 

\section{Approximation training in diagonal architecture with dependent data and noise distribution}\label{appendix:approx_training_general_noise}

In Section~\ref{sec:approx_train_Bayesian}, we assumed that the random variables $x\sim p_X$ and $\eta \sim p_H$ are independent. However, one can obtain a more general version of Lemma~\ref{lem:approx_training_linear} with less restrictive assumptions on the joint data and noise distribution. To this end, we denote by $p \colon \R^2 \to [0, \infty)$ the joint probability density function with marginal distributions  $p_X(x)=\int_\R p(x,\eta) \, \mathrm{d}\eta$,  $p_H(\eta )=\int_\R p(x,\eta) \, \mathrm{d}x$ and assume that the respective first and second moments exist. 
In this setting, the minimization problem of the approximation training reads
\begin{equation} \label{eq:bayes_train_probl_comp_wise_general_noise}
    \min_{f \in \mathcal{F}} \int_{\R^2} p(x,\eta)\, | (1 - \sigma^2) x - \eta- f(x) |^2 \, \mathrm{d}(x,\eta)
\end{equation}
and the subsequent lemma provides a closed-form solution of the minimizer of problem~\ref{eq:bayes_train_probl_comp_wise_general_noise} in the case $L < 1-\sigma^2$.
\begin{lemma} \label{lem:approx_training_linear_general_noise_model}
Let $\mathcal{F} = \{f \in C(\R) \, | \, \exists\, m \in [-L, L], b \in \R \colon \,f(x) = mx + b\}$ and $L < 1-\sigma^2$. Then,
\begin{equation}
    f^*(\hat{x}) = \begin{cases} L \hat{x} + (1-\sigma^2-L) \mu_X - \mu_H& \text{ if } \frac{\mathrm{Cov}_{p}(x,\eta) }{\mathrm{Var}_{p_X}(x)} < 1-\sigma^2 - L \\ 
    \left( 1-\sigma^2 - \frac{\mathrm{Cov}_{p}(x,\eta) }{\mathrm{Var}_{p_X}(x)} \right) \hat{x} + \frac{\mathrm{Cov}_{p}(x,\eta) }{\mathrm{Var}_{p_X}(x)} \mu_X - \mu_H & \text{ if } \frac{\mathrm{Cov}_{p}(x,\eta) }{\mathrm{Var}_{p_X}(x)} \in [1-\sigma^2 - L,1-\sigma^2 + L] \\ 
    - L \hat{x} + (1-\sigma^2+L) \mu_X - \mu_H & \text{ if } \frac{\mathrm{Cov}_{p}(x,\eta) }{\mathrm{Var}_{p_X}(x)} > 1-\sigma^2 + L  \end{cases} 
\end{equation}
is the unique solution of the minimization problem~\ref{eq:bayes_train_probl_comp_wise_general_noise}, where $\mu_X$, $\mu_H$ denote the expectation values of the marginal distributions, $\mathrm{Cov}_{p}$ the covariance w.r.t.\ $(x,\eta)\sim p$ and $\mathrm{Var}_{p_X}$ the variance w.r.t.\ $x\sim p_X$.
\end{lemma}

\begin{proof}
For a function $f$ of the form $f(x) = mx + b$ with the constraint $m^2 \leq  L^2$, we can solve \eqref{eq:bayes_train_probl_comp_wise_general_noise} by using the Lagrangian
\begin{equation}
    K(m,b, \lambda) = \int_{\R^2} p(x,\eta)\, | (1 - \sigma^2-m) x - \eta- b |^2 \, \mathrm{d}(x,\eta) + \lambda (m^2 - L^2).
\end{equation}
Observe that the integral is well-defined due to the existence of the first and second moments of $p$. In addition, the convexity, coercivity, and continuity of the integral term w.r.t. $(m,b)$ implies that there exists a minimizer. The minimizer must satisfy the necessary conditions (KKT conditions) 
\begin{align} \label{eq:dLdm_general_noise}
    \frac{\partial K}{\partial m} (m,b,\lambda) = -2\int_{\R^2} p(x,\eta) \left((1 - \sigma^2 - m) x - \eta - b \right) x \, \mathrm{d}(x,\eta) + 2\lambda m  \stackrel{!}{=} 0, \\
    \label{eq:dLdb_general_noise}
    \frac{\partial K}{\partial b} (m,b,\lambda) = - 2 \int_{\R^2}  p(x,\eta) \left((1 - \sigma^2 - m) x - \eta - b \right) \, \mathrm{d}(x,\eta) \stackrel{!}{=} 0,\\
    \label{eq:lagrange_constraint_general_noise}
    \lambda (m^2 - L^2) \stackrel{!}{=} 0, \\
    \lambda \geq  0.
\end{align}
Exploiting the marginal distribution $p_X$, $p_H$ and rearranging \eqref{eq:dLdm_general_noise} for $m$ and \eqref{eq:dLdb_general_noise} for $b$ leads to
\begin{align}
m &= \frac{(1-\sigma^2)\mathbb{E}_{ p_X}(x^2) - b\, \mu_X - \mathbb{E}_{ p}(x\cdot \eta)}{\mathbb{E}_{ p_X}(x^2) + \lambda}, \\
    b  &= (1 - \sigma^2 - m) \mu_X - \mu_H, \label{eq:b_general_noise}
\end{align}
where we use the abbreviated notation $\mathbb{E}_{ p}$ for $\mathbb{E}_{(x,\eta)\sim p}$, $\mathbb{E}_{ p_X}$ for $\mathbb{E}_{x\sim p_X}$ and $\mathbb{E}_{ p_H}$ for $\mathbb{E}_{\eta\sim p_H}$. 
Combining both equations yields
\begin{align}
    & & \left(1 - \frac{\mu_X^2}{\mathbb{E}_{p_X}(x^2) + \lambda} \right) m &= \frac{(1-\sigma^2) \left(\mathbb{E}_{p_X}(x^2) -  \mu_X^2 \right) - \left( \mathbb{E}_p(x\cdot \eta) - \mu_X\, \mu_H \right)}{\mathbb{E}_p(x^2) + \lambda}\notag \\
    &\Leftrightarrow & \frac{\mathrm{Var}_{p_X}(x) + \lambda}{\mathbb{E}_{p_X}(x^2) + \lambda}  m &= \frac{(1-\sigma^2)\mathrm{Var}_{p_X}(x) - \mathrm{Cov}_{p}(x,\eta)}{\mathbb{E}_{p_X}(x^2) + \lambda} \notag \\
    &\Leftrightarrow & m &= (1-\sigma^2)\frac{\mathrm{Var}_{p_X}(x) }{\mathrm{Var}_{p_X}(x)+ \lambda} - \frac{\mathrm{Cov}_{p}(x,\eta) }{\mathrm{Var}_{p_X}(x)+ \lambda}. \label{eq:m_general_noise}
\end{align}
In order to determine the value of $\lambda$, we need to distinguish two cases.
\begin{itemize}
    \item[(I):] $\mathrm{Cov}_{p}(x,\eta) \leq 0$: \\
    Since $1-\sigma^2 > L$ holds by assumption, the case $\lambda=0$ is not possible and we need $\lambda >0$ to ensure $m\leq  L$. Then, \eqref{eq:lagrange_constraint_general_noise} directly implies $m = L$ as \eqref{eq:m_general_noise} cancels out the possibility $m=-L$. Thus we also know $b = (1-\sigma^2 - L) \mu_X - \mu_H$.
\item[(II):] $\mathrm{Cov}_{p}(x,\eta) > 0$: \\
In this case, we need to distinguish the cases $\lambda=0$ and $\lambda>0$.
\begin{itemize}
    \item[(IIa):] $\lambda=0$:\\ 
    In this case, we have 
    \begin{equation}
        m= (1-\sigma^2) - \frac{\mathrm{Cov}_{p}(x,\eta) }{\mathrm{Var}_{p_X}(x)}.
    \end{equation}
 The constraint that $m\leq L$ and $m\geq -L$ only holds true if 
\begin{align}
    & \mathrm{Cov}_{p}(x,\eta) \geq (1-\sigma^2-L) \mathrm{Var}_{p_X}(x) \\
   \land \ & \mathrm{Cov}_{p}(x,\eta) \leq (1-\sigma^2+L) \mathrm{Var}_{p_X}(x) 
\end{align}
taking into account that $1-\sigma^2>L$.
 
    \item[(IIa):] $\lambda>0$:\\ 
    In this case, we can have either $m=L$ or $m=-L$. 
    Rearranging \eqref{eq:m_general_noise} yields
    \begin{equation}
        \lambda= \frac{1}{m} \left( (1-\sigma^2-m) \mathrm{Var}_{p_X}(x) - \mathrm{Cov}_{p}(x,\eta) \right).
    \end{equation}
    From this we deduce that $\lambda>0$ holds if either
    \begin{itemize}
        \item $m=L$ and $\mathrm{Cov}_{p}(x,\eta) < (1-\sigma^2-L) \mathrm{Var}_{p_X}(x)$, or
        \item $m=-L$ and $\mathrm{Cov}_{p}(x,\eta) >(1-\sigma^2+L) \mathrm{Var}_{p_X}(x)$.
    \end{itemize}
\end{itemize}
    
\end{itemize}

Exploiting \eqref{eq:b_general_noise} yields $b$ in either case, which provides the desired $f^*$. In combination with the observation that $m$ and $b$ are uniquely determined the proof is complete.
\end{proof}

\section{Additional numerical experiments}
\label{app:numerics_radon}

The numerical results in Section~\ref{sec:numeric_experiments} are illustrated for the convolution operator $A=M_a$. In the following additional illustrations for the convolution operator and all corresponding results for the Radon operator, described in Section~\ref{sec:numeric_experiments}, are provided.

\begin{figure}[h]
    \centering
    \includegraphics[width=.97\textwidth]{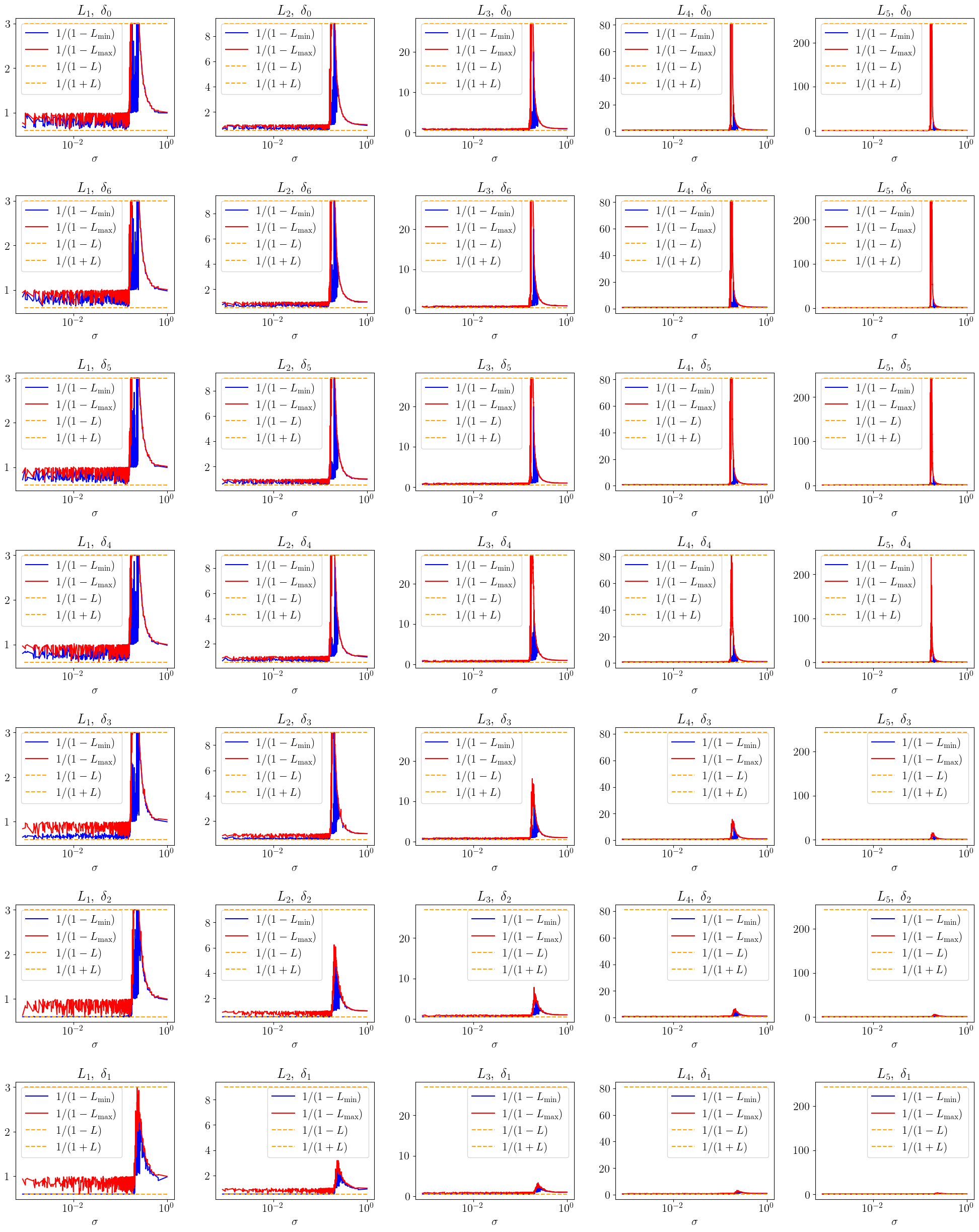}
    \caption{$\frac{1}{1-L_\mathrm{min}}$~\textit{(blue)} for minimal slopes $L_\mathrm{min}$ estimated for $f_{\theta(L_m,\delta_\ell)}$ and $\frac{1}{1-L_\mathrm{max}}$~\textit{(red)} for maximum slope $L_\mathrm{max}$ of $f_{\theta(L_m,\delta_\ell)}$ over singular values of $A=M_a$ of trained networks with Lipschitz constraint $L=L_m$ at $m=1,\hdots,5$ and noise levels $\delta_\ell$ with $\ell=0,\hdots,6$ via reconstruction training on the MNIST dataset.}
    \label{fig:SW_trueL_conv}
\end{figure}

\begin{figure}[h]
    \centering
    \begin{subfigure}[b]{0.48\textwidth}
    \centering
        \includegraphics[width=7.8cm]{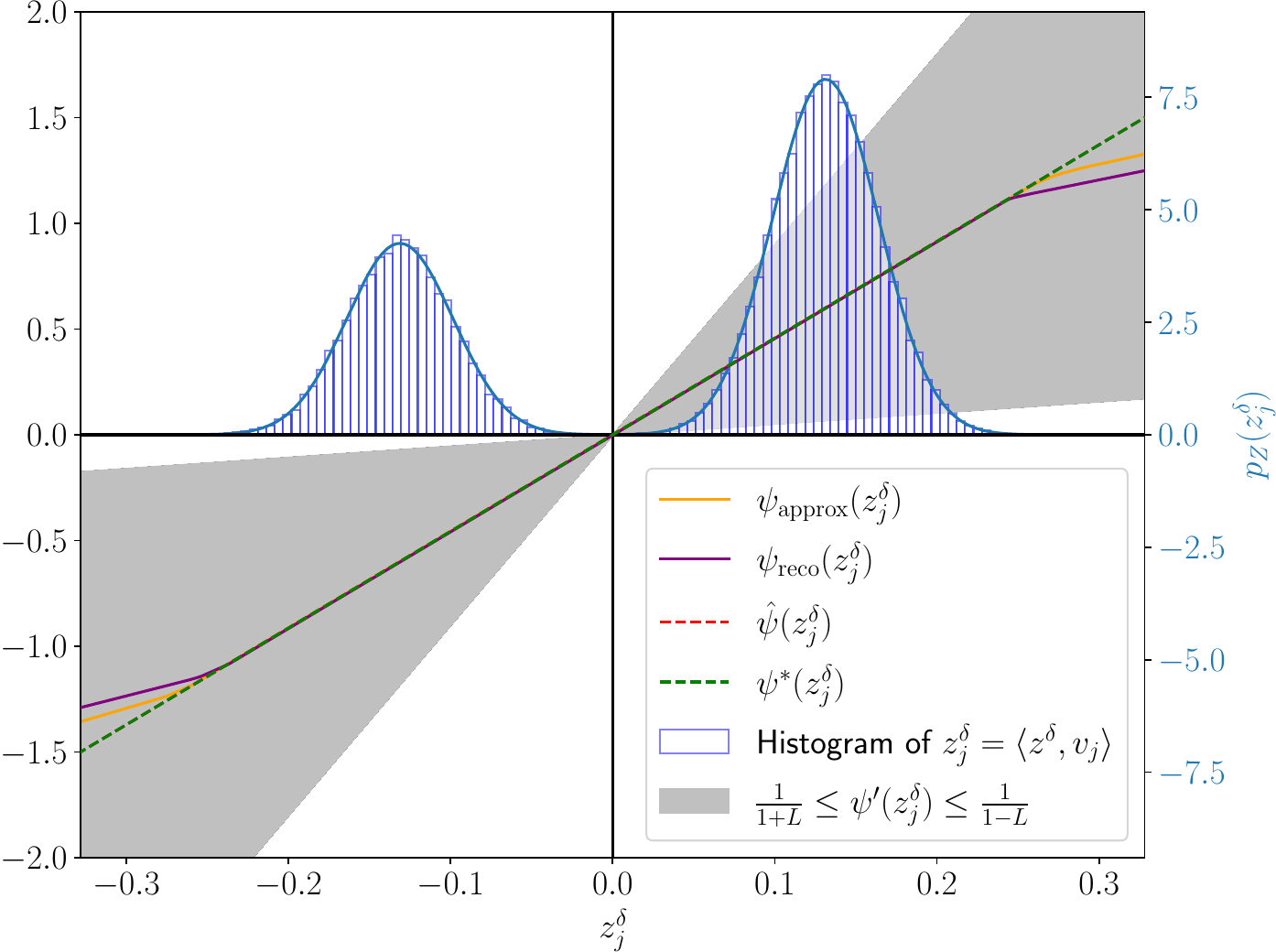}
    \end{subfigure}
    \hfill
    \begin{subfigure}[b]{0.48\textwidth}
    \centering
        \includegraphics[width=7.8cm]{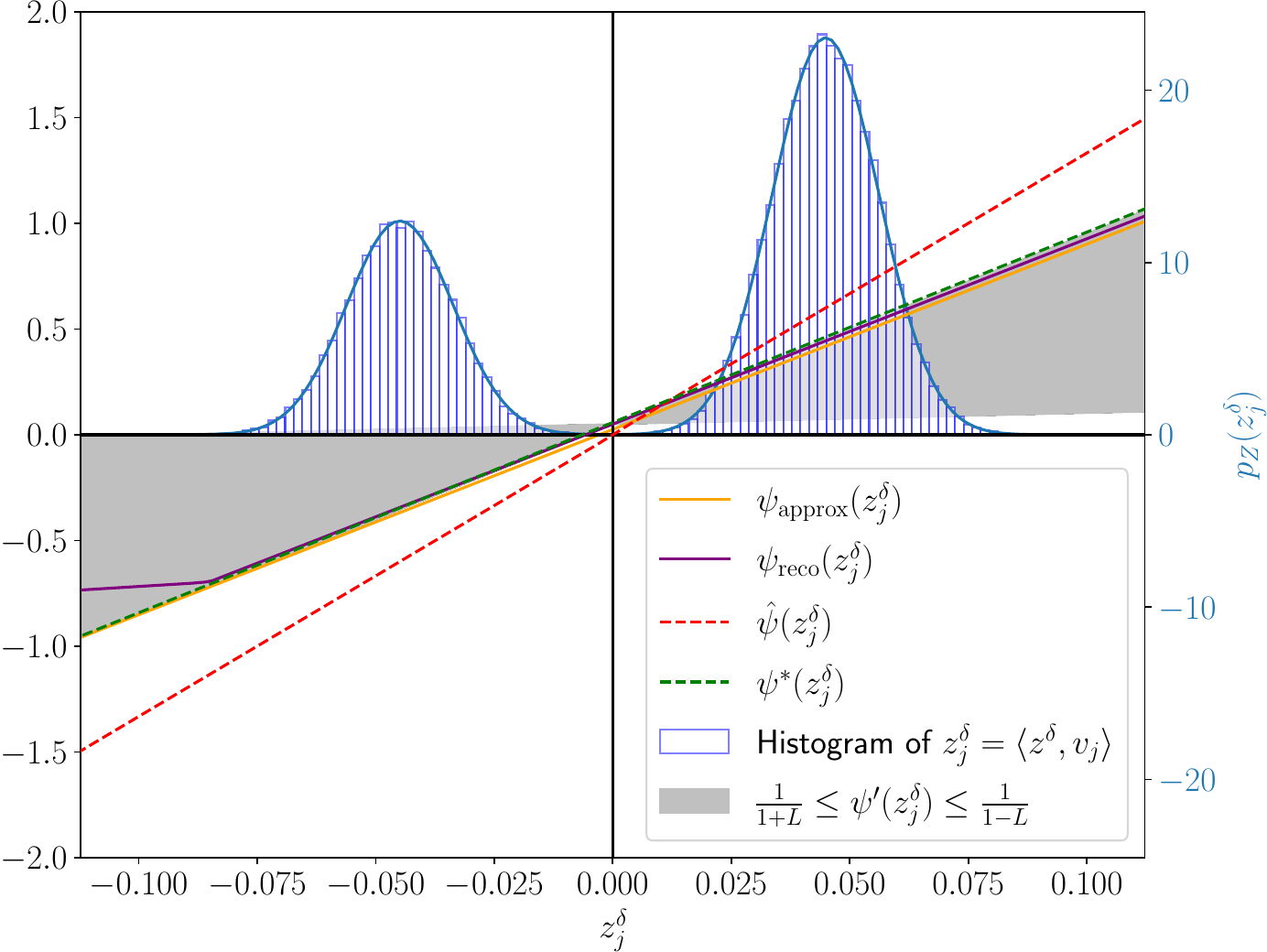}
    \end{subfigure}

    \vspace{2.0ex}
    \begin{subfigure}[b]{0.48\textwidth}
    \centering
        \includegraphics[width=7.8cm]{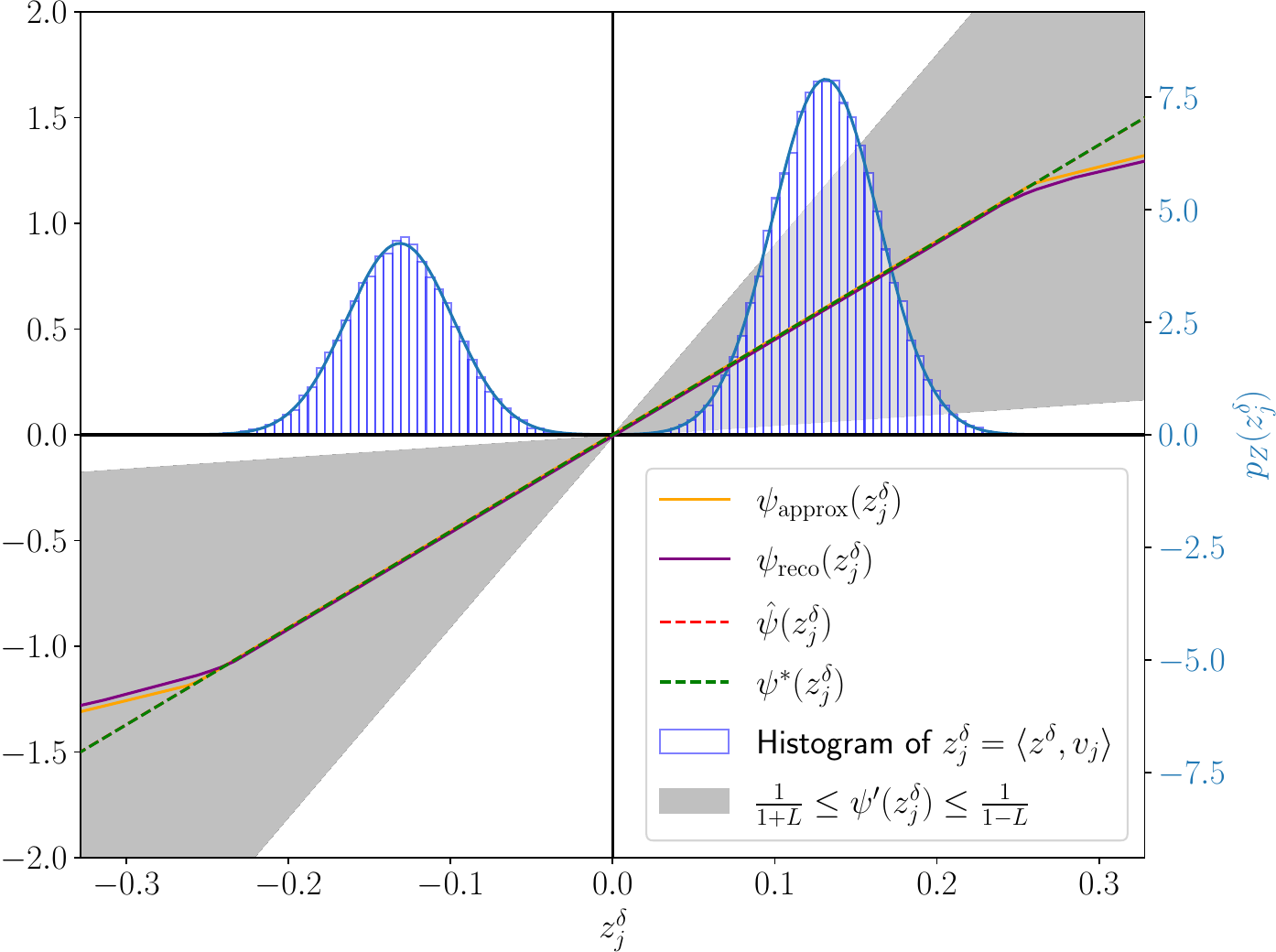}
    \end{subfigure}
    \hfill
    \begin{subfigure}[b]{0.48\textwidth}
    \centering
        \includegraphics[width=7.8cm]{images/solutionplots/v4/radon/radon_bimodal_n=2_m=6_50_0.074832.pdf}
    \end{subfigure}

    \vspace{2.0ex}
    \begin{subfigure}[b]{0.48\textwidth}
    \centering
        \includegraphics[width=7.8cm]{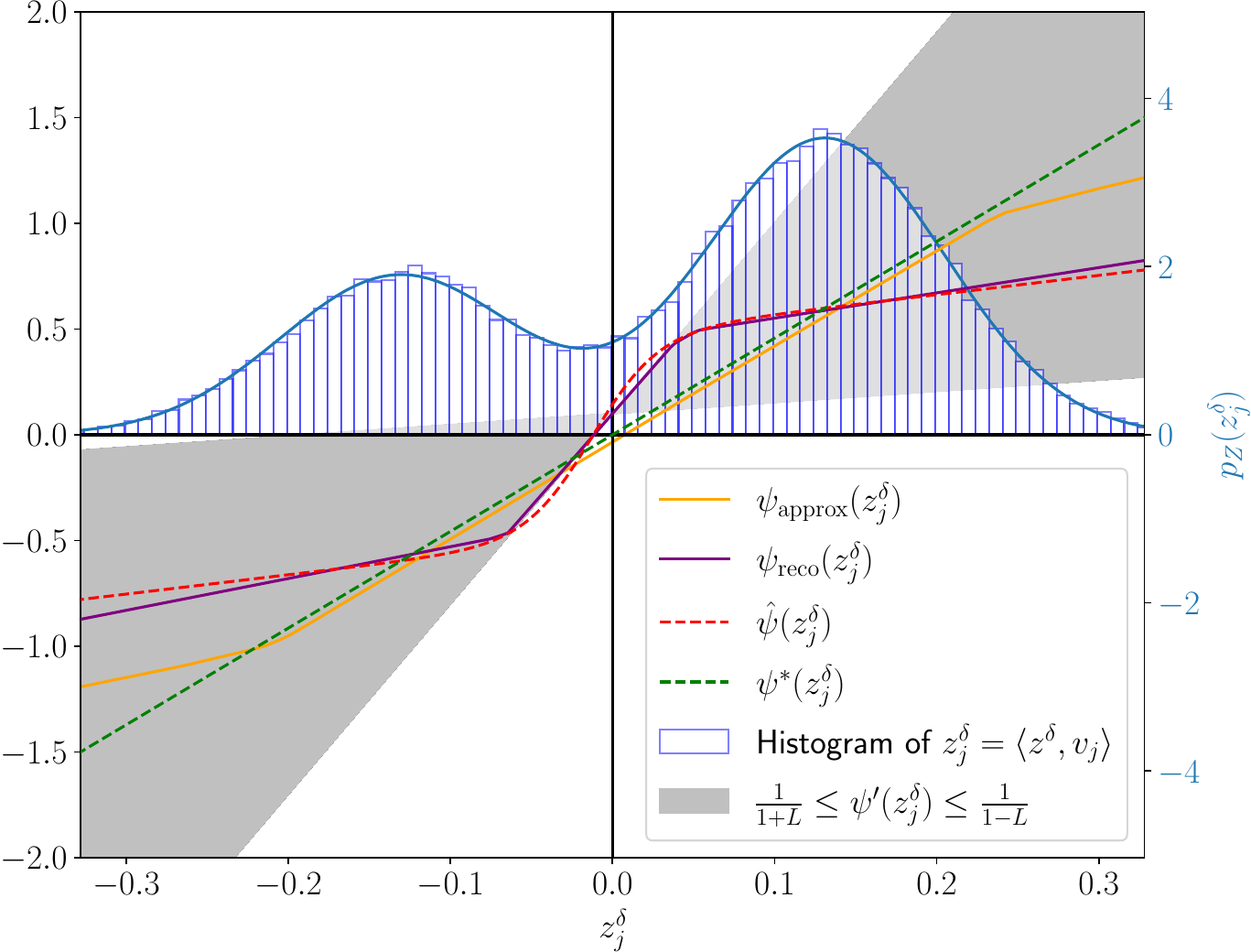}
        \subcaption{$\sigma_j^2=0.219, \ j = 8$}
    \end{subfigure}
    \hfill
    \begin{subfigure}[b]{.48\textwidth}
    \centering
        \includegraphics[width=7.8cm]{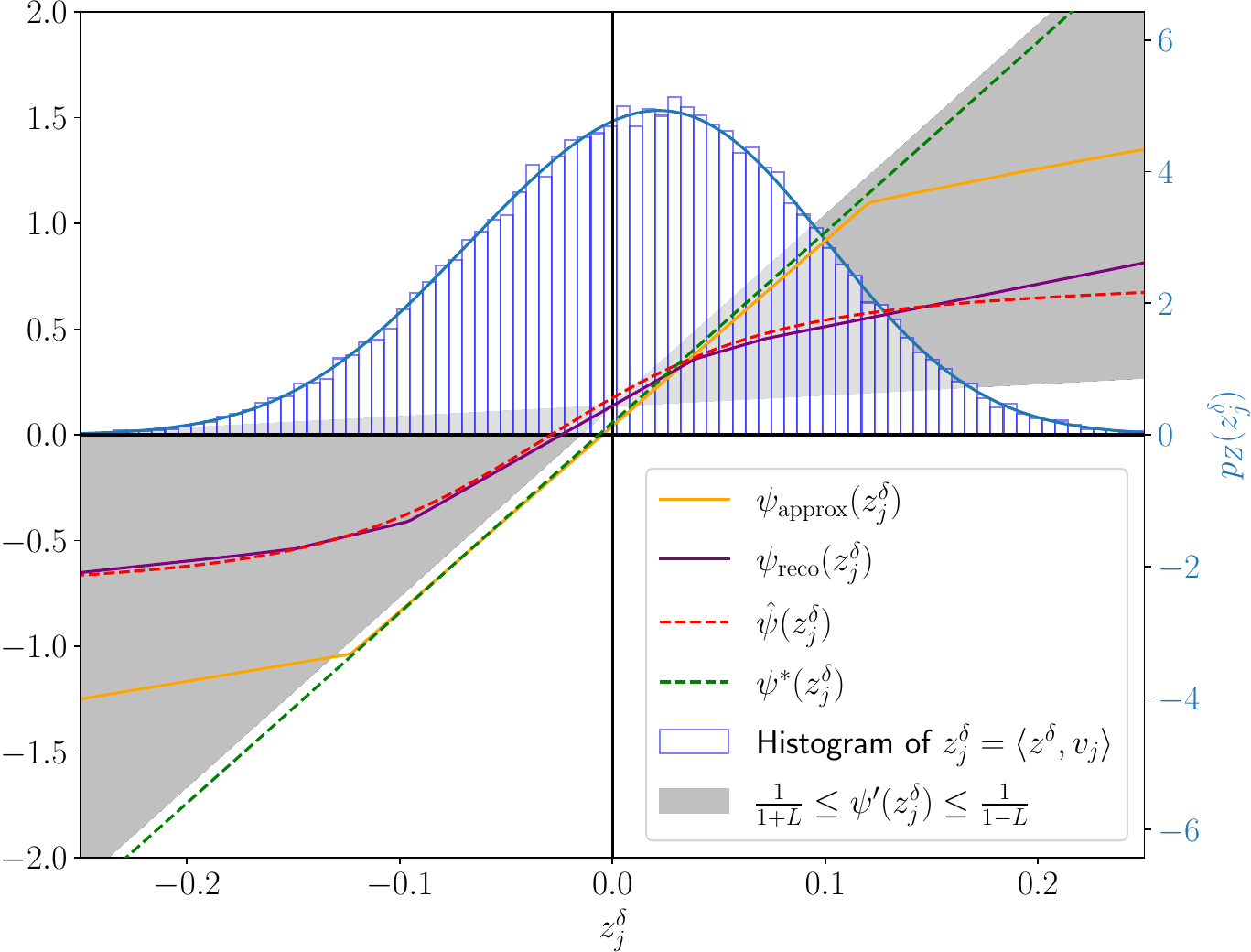}
        \subcaption{$\sigma_j^2=0.075, \ j = 51$}
    \end{subfigure}
    
    \caption{Reconstructions $\psi_\mathrm{approx}^\ast(z_j^\delta)$ trained via approximation training and  $\psi_\mathrm{reco}^\ast(z_j^\delta)$ trained via reconstruction training at Lipschitz bound $L_2$ for different singular values and 
    for noise levels "zero"~\textit{($\delta_0$, top row)}, "small"~\textit{($\delta_6$, middle row)} and "large"~\textit{($\delta_2$, bottom row)} for $\tilde{A}$ the Radon operator.}
\end{figure}

\begin{figure}[h]
    \centering
    \begin{subfigure}{\textwidth}
        \centering
        \includegraphics[width=.9\textwidth]{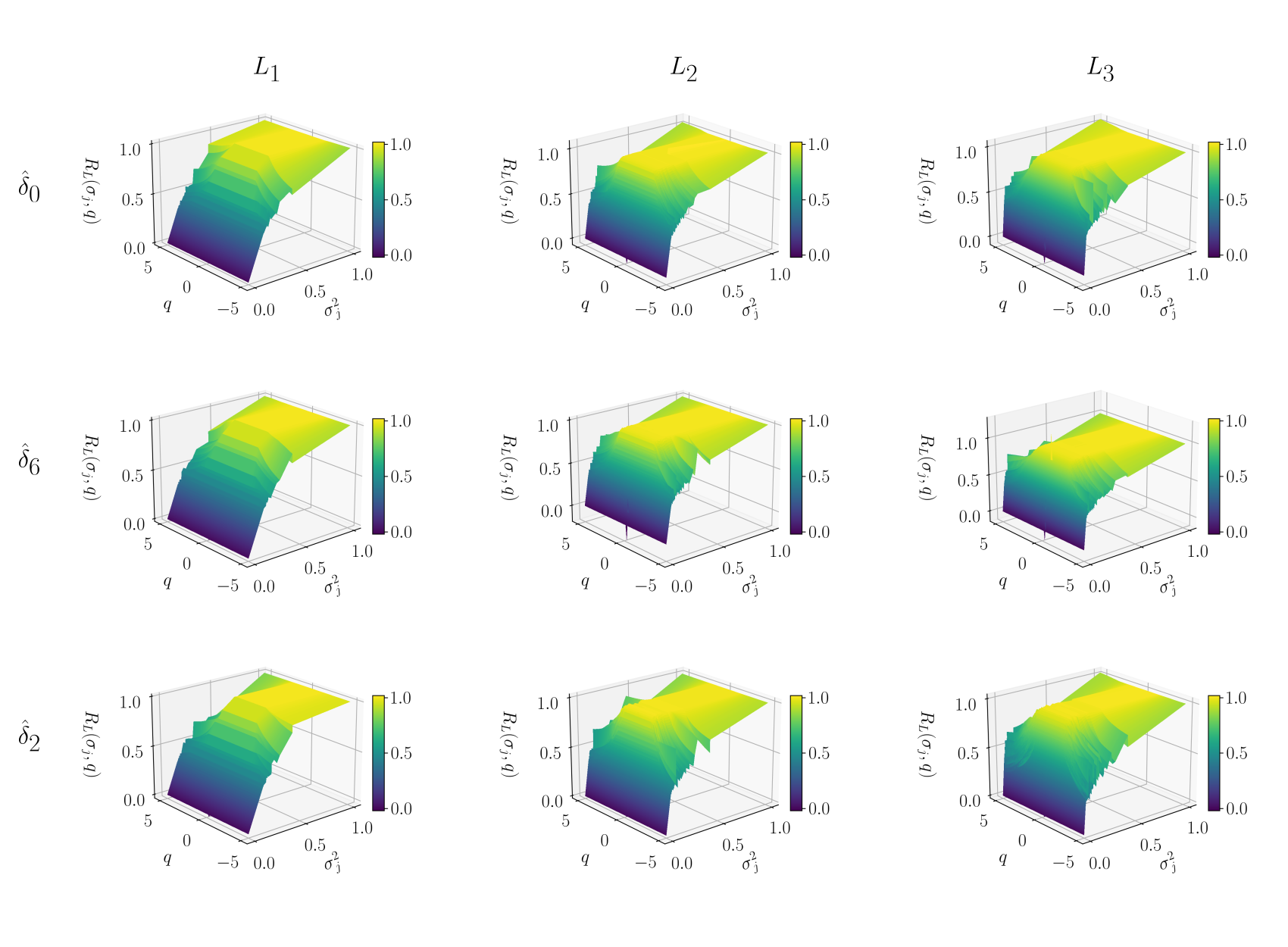}
    \end{subfigure}

    \vspace{-5.0ex}
    \begin{subfigure}{\textwidth}
        \centering
        \includegraphics[width=.9\textwidth]{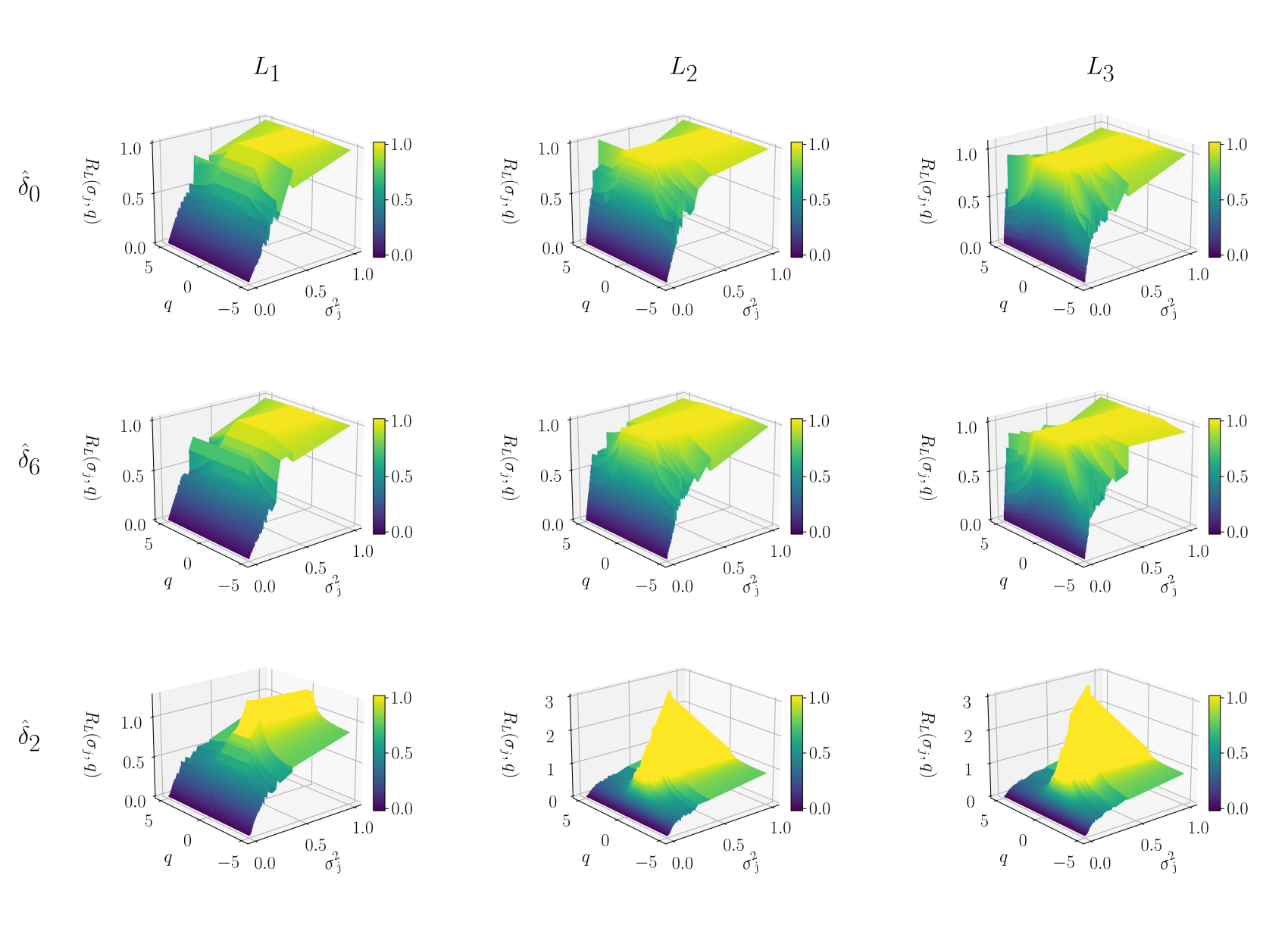}
    \end{subfigure}
    \caption{Filter functions $R_L(\sigma_j,q)$ as defined in~\eqref{eq:filter_function_numeric} corresponding to trained networks $\varphi_{\theta(L_m,\delta_\ell)}$ for $m=1,2,3$~\textit{(columns)} and $\ell = 0,6,2$~\textit{(rows)}, trained via approximation training~\textit{(top)} and via reconstruction training~\textit{(bottom)} on the bimodal dataset for $\tilde{A}$ the Radon operator}
\end{figure}

\begin{figure}[h]
    \centering
    \begin{subfigure}{\textwidth}
        \centering
        \includegraphics[width=0.75\textwidth]{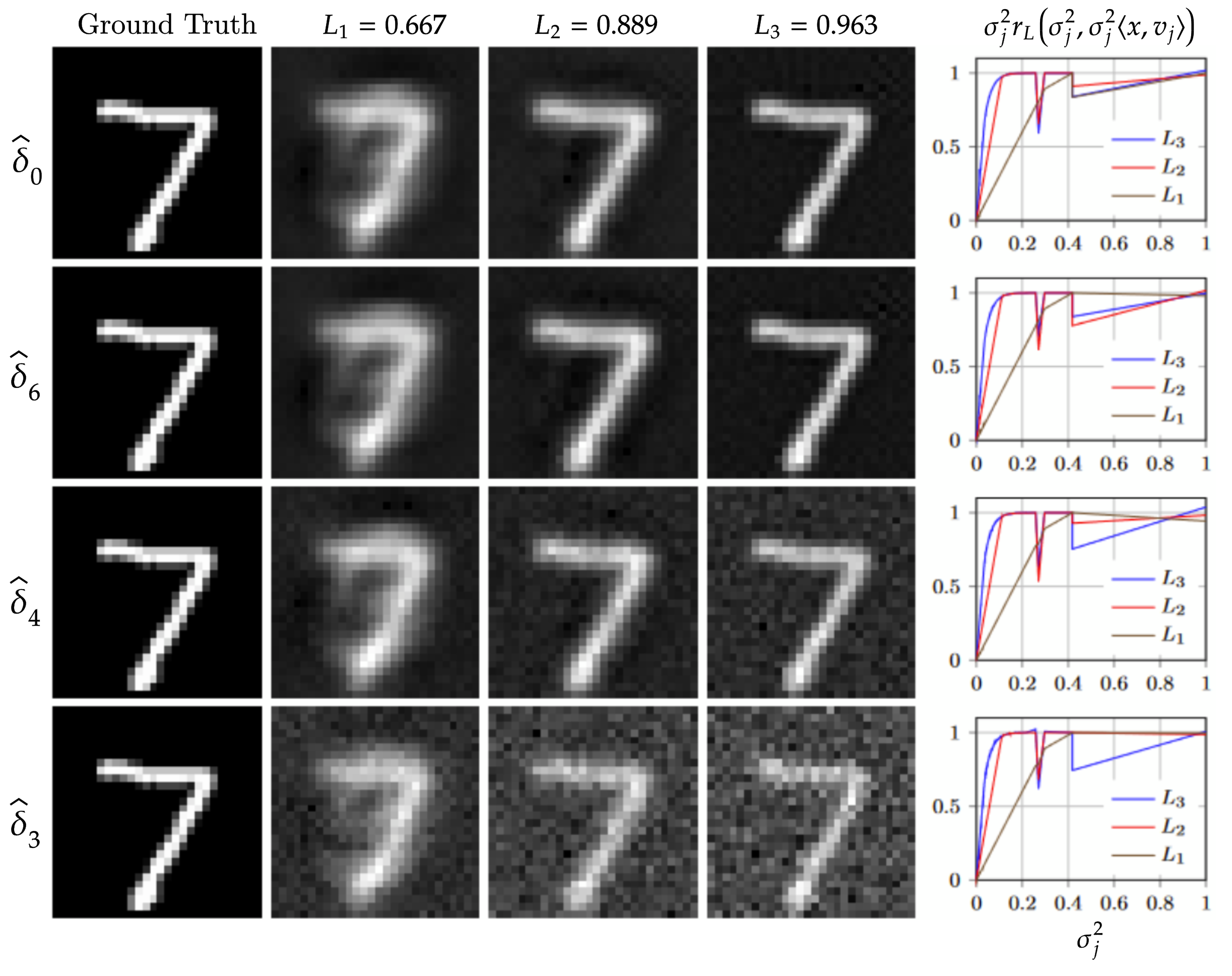}
    \end{subfigure}

    \begin{subfigure}{\textwidth}
        \centering
        \includegraphics[width=0.75\textwidth]{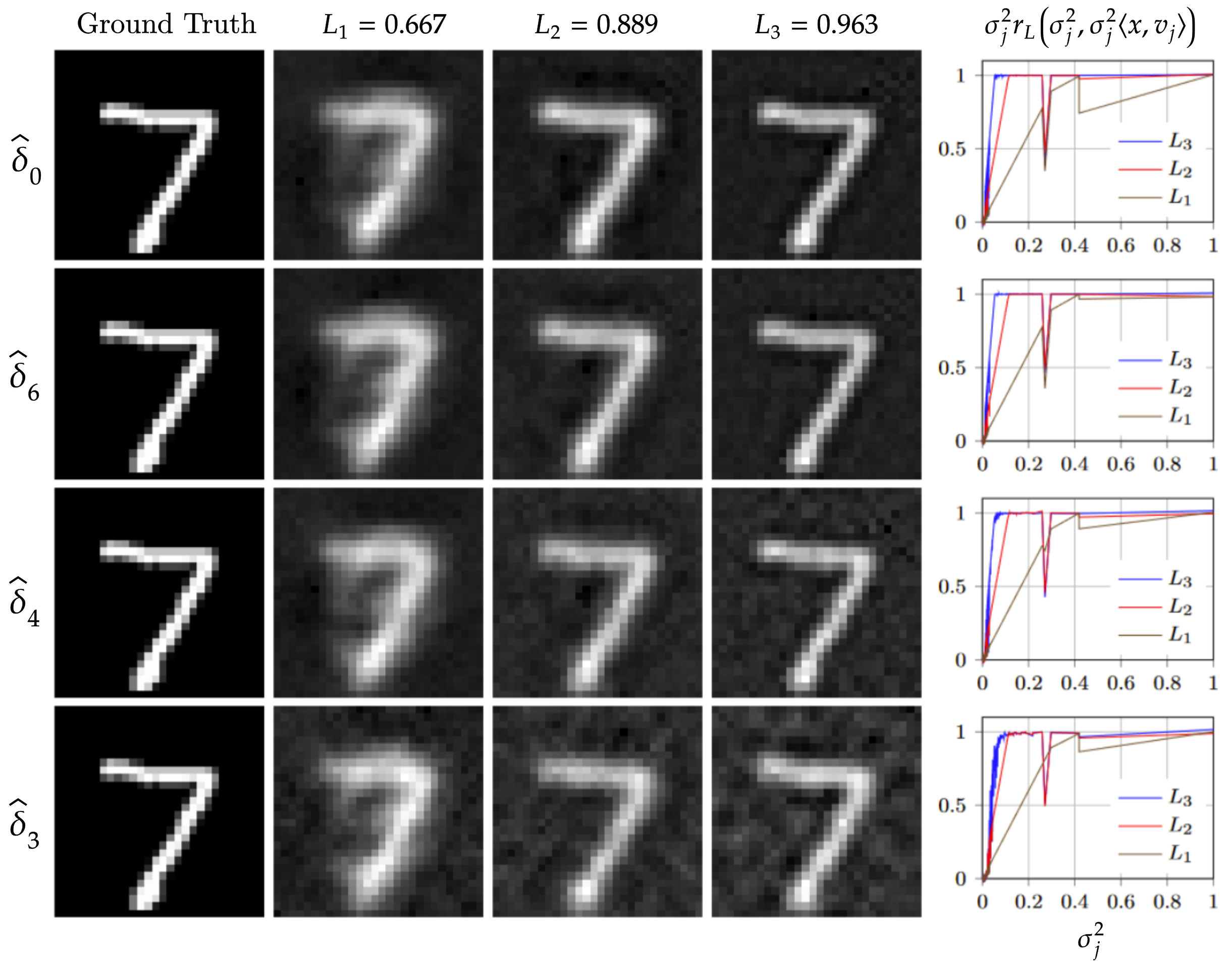}
    \end{subfigure}
    \caption{Reconstructions of an MNIST sample $x=x^{(1)}$ from the test dataset by computing $\varphi_{\theta(L_m,\delta_\ell)}^{-1}(Ax+\tilde{\eta})$ with $\tilde{\eta} \sim \mathcal{N}(0,\delta_\ell \Id)$ for Lipschitz bounds $L_m$ with $m=1,2,3$~\textit{(columns)} and noise levels $\delta_\ell= \hat{\delta}_\ell \cdot \mathrm{std}_\mathrm{MNIST}$ with $\ell =0,6,4,3$~\textit{(rows)} together with corresponding filter functions for $\tilde{A}$ the Radon operator. The top subfigure depicts the reconstructions from networks trained via approximation training and the bottom subfigure corresponds to the networks optimized via reconstruction training.}
    \label{fig:reconstructions_radon}
\end{figure}
\FloatBarrier
\setlength{\tabcolsep}{0.8em} 
\begin{table}[h]
\centering
\begin{tabular}{|c|ccc|ccc|}
\hline
approximation & \multicolumn{3}{c|}{SSIM} & \multicolumn{3}{c|}{MSE} \\ \cline{2-7}
training & $L_1$      & $L_2$    & $L_3$  & $L_1$      &$L_2$    & $L_3$    \\ \hline
$\delta_0$                                                                                        & 0.4667   & 0.7998 & 0.9224  & 0.0308   & 0.0117 & 0.0043 \\
$\delta_6$                                                                                        & 0.4628   & 0.7940 & 0.9179 & 0.0306   & 0.0119 & 0.0043 \\
$\delta_4$                                                                                        & 0.4597   & 0.7547 & 0.7832  & 0.0309   & 0.0129 & 0.0075 \\
$\delta_3$                                                                                        & 0.4559   & 0.6150 & 0.5572  & 0.0310   & 0.0194 & 0.0313 \\ \hline
reconstruction & \multicolumn{3}{c|}{SSIM} & \multicolumn{3}{c|}{MSE} \\ \cline{2-7} training & $L_1$      & $L_2$    & $L_3$  & $L_1$      &$L_2$    & $L_3$    \\ \hline
$\delta_0$                                                                                        & 0.4892 & 0.7804 & 0.8952  & 0.0291 & 0.0124 & 0.0045 \\
$\delta_6$                                                                                        & 0.4718   & 0.7786 & 0.8905  & 0.0304   & 0.0126 & 0.0047 \\
$\delta_4$                                                                                        & 0.4671   & 0.7482 & 0.8243  & 0.0300   & 0.0129 & 0.0061 \\
$\delta_3$                                                                                        & 0.4697   & 0.6929 & 0.6964  & 0.0299   & 0.0149 & 0.0121 \\
\hline
\end{tabular}
\caption{SSIM and MSE measures corresponding to reconstructions of $x^{(1)}$ in Figure~\ref{fig:reconstructions_radon}.}
\end{table}

\begin{figure}[h]
\captionsetup[subfigure]{labelformat=empty}
\begin{subfigure}{\textwidth}
    \centering
    \begin{tikzpicture}
        \pgfplotsset{
        every axis plot/.append style={thick},
        tick style={black, thick},
        every axis plot/.append style={line width=0.8pt},
        every axis/.style={
            axis y line=left,
            axis x line=bottom,
            axis line style={ultra thick,->,>=latex, shorten >=-.4cm}
        },
        }
        \begin{groupplot}[
            group style={
                group name=my plots,
                group size=2 by 1,
                ylabels at=edge left,
                horizontal sep=3.0cm
            },
            scale only axis=true,
            width=0.23\linewidth,
            height=3.0cm,
            axis lines=middle, 
            xlabel = {$1-L$},
            ytick = {0.,5,10,15,20,25,30},
            xtick = {0, 250,500, 750},
            ymin = 0.0,
            ymax = 0.22,
             x label style={at={(axis description cs:1.14,0.1)},anchor=north},
             y label style={at={(axis description cs:0.035,1.32)},anchor=north},
        ]
        \nextgroupplot[
            xmode=log,ymode=log, 
            ymin =0.0001, ymax=0.15,
            xmin=0.001, xmax=1,
            axis y line=left,
            ytick={0.001,0.01,0.1},
            xtick={0.001,0.01,0.1,1},
            axis x line=bottom,
            x label style={at={(axis description cs:1.29,0.1)},anchor=north},
            y label style={at={(axis description cs:0.025,1.27)},anchor=north},
            legend style={at={(1.95,1)},draw=none},
            legend cell align={left}
        ]
        \addplot[color=black,dashed, domain=0.001:1]{0.3*x};
        \addplot[color=gray,dashed, domain=0.001:1]{0.3*x^2};
        \addplot[teal,mark=*] table [x=L, y=mean_norm, col sep=comma]{csv_files/lap_radon_approx_pt2.csv};
        \addplot[blue,mark=*] table [x=L, y=x1_norm, col sep=comma]{csv_files/lap_radon_approx_pt2.csv};
        \addplot[red,mark=*] table [x=L, y=x2_norm, col sep=comma]{csv_files/lap_radon_approx_pt2.csv};

        \nextgroupplot[
            xmode=log,ymode=log, 
            ymin =0.0001, ymax=0.15,
            xmin=0.001, xmax=1,
            axis y line=left,
            ytick={0.001,0.01,0.1},
            xtick={0.001,0.01,0.1,1},
            axis x line=bottom,
            x label style={at={(axis description cs:1.29,0.1)},anchor=north},
            y label style={at={(axis description cs:0.025,1.27)},anchor=north},
            legend style={at={(2.02,1.1)},draw=none},
            legend cell align={left}
        ]
        \addplot[color=black,dashed, domain=0.001:1]{0.3*x};
        \addlegendentry{{\footnotesize{$\mathcal{O}\big((1-L)\big)$}}}
        \addplot[color=gray,dashed, domain=0.001:1]{0.3*x^2};
        \addlegendentry{{\footnotesize{$\mathcal{O}\big((1-L)^2\big)$}}}
        \addplot[teal,mark=*] table [x=L, y=mean_norm, col sep=comma]{csv_files/lap_radon_reco_pt2.csv};
        \addlegendentry{{$\mathcal{E}_\mathrm{mean}(\varphi_{\theta(L)},A)$}}
        \addplot[blue,mark=*] table [x=L, y=x1_norm, col sep=comma]{csv_files/lap_radon_reco_pt2.csv};
        \addlegendentry{{$\mathcal{E}_{x^{(1)}}(\varphi_{\theta(L)},A)$}}
        \addplot[red,mark=*] table [x=L, y=x2_norm, col sep=comma]{csv_files/lap_radon_reco_pt2.csv};
        \addlegendentry{{$\mathcal{E}_{x^{(2)}}(\varphi_{\theta(L)},A)$}}
        
        \end{groupplot}
    \end{tikzpicture}
    \end{subfigure}

    \vspace{4.0ex}
 
    \hfill
    \begin{subfigure}[b]{0.145\textwidth}
    \centering

    \vspace{-10.0ex}
    \includegraphics[width=0.75\textwidth]{images/test_min.png}
    \vspace{-1.0ex}
    \caption{\footnotesize{$x^{(1)}$}}

    \phantom{\includegraphics[width=.3\textwidth]{images/test_min.png}}
    \vfill
    \end{subfigure}
    \begin{subfigure}{0.145\textwidth}
    \centering
    \vspace{-3.0ex}
        \includegraphics[width=0.75\textwidth]{images/test_max.png}
    \vspace{-1.0ex}
    \caption{\footnotesize{$x^{(2)}$}}
 
    \phantom{\includegraphics[width=.3\textwidth]{images/test_min.png}}
    \end{subfigure}
    \hspace{2.0ex}
    \begin{subfigure}{.59\textwidth}
        \centering
    \begin{tikzpicture}
        \pgfplotsset{
        every axis plot/.append style={thick},
        tick style={black, thick},
        every axis plot/.append style={line width=0.8pt},
        every axis/.style={
            axis y line=left,
            axis x line=bottom,
            axis line style={ultra thick,->,>=latex, shorten >=-.4cm}
        },
        }
        \begin{groupplot}[
            group style={
                group name=my plots,
                group size=1 by 1,
                ylabels at=edge left,
                horizontal sep=1.5cm
            },
            scale only axis=true,
            width=0.39\linewidth,
            height=3.0cm,
            axis lines=middle, 
            xlabel = {$1-L$},
            ytick = {0.,5,10,15,20,25,30},
            xtick = {0, 250,500, 750},
            ymin = 0.0,
            ymax = 0.22,
             x label style={at={(axis description cs:1.14,0.1)},anchor=north},
             y label style={at={(axis description cs:0.035,1.32)},anchor=north},
        ]
        
        \nextgroupplot[
                    xmode=log,ymode=log, 
                    ymin =0.04, ymax=0.25,
                    xmin=0.002, xmax=1,
                    axis y line=left,
                    ytick={0.001,0.01,0.1},
                    xtick={0.001,0.01,0.1,1},
                    axis x line=bottom,
                    x label style={at={(axis description cs:1.29,0.1)},anchor=north},
                    y label style={at={(axis description cs:0.025,1.27)},anchor=north},
                    legend style={at={(2.02,1)},draw=none},
                    legend cell align={left}
                ]
                \addplot[teal,mark=*] table [x=L, y=gen_mean_norm, col sep=comma]{csv_files/lap_radon_reco_pt2.csv};
                \addlegendentry{{$\tilde{\mathcal{E}}_\mathrm{mean}(\varphi_{\theta(L)},A)$}}
                \addplot[blue,mark=*] table [x=L, y=x1_gen_norm, col sep=comma]{csv_files/lap_radon_reco_pt2.csv};
                \addlegendentry{{$\tilde{\mathcal{E}}_{x^{(1)}}(\varphi_{\theta(L)},A)$}}
                \addplot[red,mark=*] table [x=L, y=x2_gen_norm, col sep=comma]{csv_files/lap_radon_reco_pt2.csv};
                \addlegendentry{{$\tilde{\mathcal{E}}_{x^{(2)}}(\varphi_{\theta(L)},A)$}}
        \end{groupplot}
    \end{tikzpicture}
    \end{subfigure}
    \caption{Test samples $x^{(1)}$ and $x^{(2)}$~\textit{(bottom left)}. Evaluations of the local approximation property via $\mathcal{E}_\mathrm{mean}(\varphi_{\theta(L_m)},A), \ \mathcal{E}_{x^{(1)}}(\varphi_{\theta(L_m)},A)$ and $\mathcal{E}_{x^{(2)}}(\varphi_{\theta(L_m)},A)$ for the approximation training~\textit{(top left)} and the reconstruction training~\textit{(top right)}, and evaluations of the generalized approximation property via $\tilde{\mathcal{E}}_\mathrm{mean}(\varphi_{\theta(L_m)},A), \tilde{\mathcal{E}}_{x^{(1)}}(\varphi_{\theta(L_m)},A)$ and $\tilde{\mathcal{E}}_{x^{(2)}}(\varphi_{\theta(L_m)},A)$ for the reconstruction training~\textit{(bottom right)} for $L_m = 1-\nicefrac{1}{3^m}$ with $m=1,\hdots,5$, $\tilde{A}$ the Radon operator on the MNIST test dataset.  
    }
\end{figure}

\begin{figure}[h]
\centering
\begin{subfigure}{\textwidth}
\centering
\begin{tikzpicture}
\pgfplotsset{
every axis plot/.append style={line width=0.8pt},
xmode = log,
ymode = log,
log y ticks with fixed point/.style={
      yticklabel={
        \pgfkeys{/pgf/fpu=true}
        \pgfmathparse{exp(\tick)}%
        \pgfmathprintnumber[fixed relative, precision=3]{\pgfmathresult}
        \pgfkeys{/pgf/fpu=false}}    
}
}
\begin{groupplot}[
    group style={
        group name=my plots,
        group size=3 by 1,
        ylabels at=edge left,
        xlabels at=edge bottom,
        horizontal sep=2.5cm
    },
    ymode=log,
    scale only axis,
    footnotesize,
    width=3.7cm,
    height=3.7cm,
    tickpos=left,
    x tick label style={/pgf/number format/fixed,
    /pgf/number format/1000 sep = \thinspace},
    ytick align=outside,
    xtick align=outside,
    enlarge x limits=false,
    xmax=0.33,
    grid = both,
    xtick={0,0.0001,0.001,0.01,0.1,1},
    anchor = north,
    clip=true,
    cycle list name=color list
]
\nextgroupplot[ ylabel={$\mathrm{MSE}_\mathrm{reco}^{\delta\ell}(\varphi_{\theta(L,\delta_\ell)},A)$}, ymax = 0.7, ytick ={0.001,0.01,0.1,1}, yticklabels={$10^{-3}$,$ 10^{-2}$,$10^{-1}$}
 ]
\addplot table [x=noise, y=1, col sep=comma]{csv_files/part2_MSEoverNoise_approx_reco_noise=True.csv};
\addplot table [x=noise, y=2, col sep=comma]{csv_files/part2_MSEoverNoise_approx_reco_noise=True.csv};
\addplot table [x=noise, y=3, col sep=comma]{csv_files/part2_MSEoverNoise_approx_reco_noise=True.csv};
\addplot table [x=noise, y=4, col sep=comma]{csv_files/part2_MSEoverNoise_approx_reco_noise=True.csv};
\addplot table [x=noise, y=5, col sep=comma]{csv_files/part2_MSEoverNoise_approx_reco_noise=True.csv};

  \nextgroupplot[
   legend pos=outer north east,
    legend style={draw=none},
    legend cell align={left},
  xshift=-1.3cm,
  ymax = 0.7, ytick ={0.001,0.01,0.1,1}, yticklabels={$10^{-3}$,$ 10^{-2}$,$10^{-1}$}
  ]
  \addplot table [x=noise, y=1, col sep=comma]{csv_files/part2_MSEoverNoise_reco_reco_noise=True.csv};
  \addlegendentry{$L_1$}
 \addplot table [x=noise, y=2, col sep=comma]{csv_files/part2_MSEoverNoise_reco_reco_noise=True.csv};
\addlegendentry{$L_2 $}
\addplot table [x=noise, y=3, col sep=comma]{csv_files/part2_MSEoverNoise_reco_reco_noise=True.csv};
\addlegendentry{$L_3 $}
\addplot table [x=noise, y=4, col sep=comma]{csv_files/part2_MSEoverNoise_reco_reco_noise=True.csv};
\addlegendentry{$L_4$}
 \addplot table [x=noise, y=5, col sep=comma]{csv_files/part2_MSEoverNoise_reco_reco_noise=True.csv};
\addlegendentry{$L_5 $}

\nextgroupplot[
     legend pos= north west,
    legend cell align={left},
  ymax = 0.7, ytick ={0.001,0.01,0.1,1}, yticklabels={$10^{-3}$,$ 10^{-2}$,$10^{-1}$}
  ]
\addplot table [x=noise, y=min_mean, col sep=comma]{csv_files/part2_MSEoverNoise_approx_reco_noise=True.csv};
\addlegendentry{approximation}
 \addplot table [x=noise, y=min_mean, col sep=comma]{csv_files/part2_MSEoverNoise_reco_reco_noise=True.csv};
\addlegendentry{reconstruction}

\end{groupplot}
\end{tikzpicture}

\end{subfigure}

\begin{subfigure}{\textwidth}
\centering
\begin{tikzpicture}
\pgfplotsset{
every axis plot/.append style={line width=0.8pt},
xmode = log,
log y ticks with fixed point/.style={
      yticklabel={
        \pgfkeys{/pgf/fpu=true}
        \pgfmathparse{exp(\tick)}%
        \pgfmathprintnumber[fixed relative, precision=3]{\pgfmathresult}
        \pgfkeys{/pgf/fpu=false}}    
}
}
\begin{groupplot}[
    group style={
        group name=my plots,
        group size=3 by 1,
        ylabels at=edge left,
        xlabels at=edge bottom,
        horizontal sep=2.5cm
    },
    scale only axis,
    footnotesize,
    width=3.7cm,
    height=3.7cm,
    tickpos=left,
    x tick label style={/pgf/number format/fixed,
    /pgf/number format/1000 sep = \thinspace},
    ytick align=outside,
    xtick align=outside,
    enlarge x limits=false,
    xmax=0.33,
    xlabel={$\hat{\delta}_\ell$},
    grid = both,
    xtick={0,0.0001,0.001,0.01,0.1,1},
    anchor = north,
    clip=true,
    cycle list name=color list
]
\nextgroupplot[ ylabel={$\mathrm{SSIM}^{\delta_\ell}(\varphi_{\theta(L,\delta_\ell)},A)$}, ymin=0.15, ymax = 1, ytick ={0.2,0.4,0.6,0.8,1}, yticklabels={$0.2$,$ 0.4$,$0.6$,$0.8$,$1$}
 ]
\addplot table [x=noise, y=1, col sep=comma]{csv_files/radon_SSIM_MNIST_approx_noise=True.csv};
\addplot table [x=noise, y=2, col sep=comma]{csv_files/radon_SSIM_MNIST_approx_noise=True.csv};
\addplot table [x=noise, y=3, col sep=comma]{csv_files/radon_SSIM_MNIST_approx_noise=True.csv};
\addplot table [x=noise, y=4, col sep=comma]{csv_files/radon_SSIM_MNIST_approx_noise=True.csv};
\addplot table [x=noise, y=5, col sep=comma]{csv_files/radon_SSIM_MNIST_approx_noise=True.csv};

  \nextgroupplot[   
    legend pos=outer north east,
    legend style={draw=none},
    legend cell align={left},
  xshift=-1.3cm,
  ymin=0.2, ymax = 1, ytick ={0.2,0.4,0.6,0.8,1}, yticklabels={$0.2$,$ 0.4$,$0.6$,$0.8$,$1$}
  ]
  \addplot table [x=noise, y=1, col sep=comma]{csv_files/radon_SSIM_MNIST_reco_noise=True.csv};
  \addlegendentry{$L_1$}
 \addplot table [x=noise, y=2, col sep=comma]{csv_files/radon_SSIM_MNIST_reco_noise=True.csv};
\addlegendentry{$L_2 $}
\addplot table [x=noise, y=3, col sep=comma]{csv_files/radon_SSIM_MNIST_reco_noise=True.csv};
\addlegendentry{$L_3 $}
\addplot table [x=noise, y=4, col sep=comma]{csv_files/radon_SSIM_MNIST_reco_noise=True.csv};
\addlegendentry{$L_4$}
 \addplot table [x=noise, y=5, col sep=comma]{csv_files/radon_SSIM_MNIST_reco_noise=True.csv};
\addlegendentry{$L_5 $}

\nextgroupplot[ 
    legend pos= south west,
    legend cell align={left},
 ymin=0.2, ymax = 1, ytick ={0.2,0.4,0.6,0.8,1}, yticklabels={$0.2$,$ 0.4$,$0.6$,$0.8$,$1$}
  ]
\addplot table [x=noise, y=max_mean, col sep=comma]{csv_files/radon_SSIM_MNIST_approx_noise=True.csv};
\addlegendentry{approximation}
 \addplot table [x=noise, y=max_mean, col sep=comma]{csv_files/radon_SSIM_MNIST_reco_noise=True.csv};
\addlegendentry{reconstruction}

\end{groupplot}
\end{tikzpicture}
\end{subfigure}
\caption{Reconstruction errors $\mathrm{MSE}_\mathrm{reco}^{\delta_\ell}(\varphi_{\theta(L,\delta_\ell)},A)$~\textit{(top row)} and $\mathrm{SSIM}^{\delta_\ell}(\varphi_{\theta(L,\delta_\ell)},A)$~\textit{(bottom row)} for networks trained on noisy samples with noise levels $\delta_\ell$ for $\ell = 0,\hdots,6$ and reconstructions from noisy samples of the same noise level for the approximation training~\textit{(left)} and for the reconstruction training~\textit{(middle)} with Lipschitz bounds $L_m$ on the MNIST dataset for the Radon operator $\tilde{A}$. Outcomes of optimal parameter choices for both training strategies over different noise levels can be seen on the right-hand side.
}
\end{figure}

\begin{figure}[h]
\centering
\begin{tikzpicture}
\pgfplotsset{
every axis plot/.append style={line width=0.8pt},
xmode = log,
ymode = log,
log y ticks with fixed point/.style={
      yticklabel={
        \pgfkeys{/pgf/fpu=true}
        \pgfmathparse{exp(\tick)}%
        \pgfmathprintnumber[fixed relative, precision=3]{\pgfmathresult}
        \pgfkeys{/pgf/fpu=false}}    
}
}
\begin{groupplot}[
    group style={
        group name=my plots,
        group size=2 by 1,
        ylabels at=edge left,
        xlabels at=edge bottom,
        horizontal sep=1.8cm
    },
    scale only axis,
    footnotesize,
    width=5.3cm,
    height=4.2cm,
    tickpos=left,
    x tick label style={/pgf/number format/fixed,
    /pgf/number format/1000 sep = \thinspace},
    ytick align=outside,
    xtick align=outside,
    enlarge x limits=false,
    xmax=0.33,
    xlabel={$\hat{\delta}_\ell$},
    grid = both,
    xtick={0,0.0001,0.001,0.01,0.1,1},
    anchor = north,
    clip=true,
    cycle list name=color list
]
\nextgroupplot[
ymin=0.65, ymax = 1.07, ytick ={0.7,0.8,0.9,1.0}, yticklabels={$0.7$,$0.8$,$0.9$,$1.0$}
 ]
\addplot table [x=noise, y=1, col sep=comma]{csv_files/radon_trueL_MNIST_approx_noise=True.csv};
\addplot table [x=noise, y=2, col sep=comma]{csv_files/radon_trueL_MNIST_approx_noise=True.csv};
\addplot table [x=noise, y=3, col sep=comma]{csv_files/radon_trueL_MNIST_approx_noise=True.csv};
\addplot table [x=noise, y=4, col sep=comma]{csv_files/radon_trueL_MNIST_approx_noise=True.csv};
\addplot table [x=noise, y=5, col sep=comma]{csv_files/radon_trueL_MNIST_approx_noise=True.csv};
\addplot[color=red, dashed, domain=0.009:1]{0.667};
\addplot[color=blue, dashed, domain=0.009:1]{0.889};
\addplot[color=black, dashed, domain=0.009:1]{0.963};
\addplot[color=yellow, dashed, domain=0.009:1]{0.988};
\addplot[color=brown, dashed, domain=0.009:1]{0.996};

\nextgroupplot[ 
ymin=0.65, ymax = 1.07, ytick ={0.7,0.8,0.9,1.0}, yticklabels={$0.7$,$0.8$,$0.9$,$1.0$},legend pos=outer north east,
    legend style={draw=none},
    legend columns=2,legend style={
            /tikz/column 2/.style={
                column sep=5pt,
            },},
    legend cell align={left}
 ]
\addplot[red] table [x=noise, y=1, col sep=comma]{csv_files/radon_trueL_MNIST_reco_noise=True.csv};
\addlegendentry{$\tilde{L}_1$}
\addplot[color=red, dashed, domain=0.009:1]{0.667};
\addlegendentry{$L_1$}
\addplot[blue] table [x=noise, y=2, col sep=comma]{csv_files/radon_trueL_MNIST_reco_noise=True.csv};
\addlegendentry{$\tilde{L}_2$}
\addplot[color=blue, dashed, domain=0.009:1]{0.889};
\addlegendentry{$L_2$}
\addplot[black] table [x=noise, y=3, col sep=comma]{csv_files/radon_trueL_MNIST_reco_noise=True.csv};
\addlegendentry{$\tilde{L}_3$}
\addplot[color=black, dashed, domain=0.009:1]{0.963};
\addlegendentry{$L_3$}
\addplot[yellow] table [x=noise, y=4, col sep=comma]{csv_files/radon_trueL_MNIST_reco_noise=True.csv};
\addlegendentry{$\tilde{L}_4$}
\addplot[color=yellow, dashed, domain=0.009:1]{0.988};
\addlegendentry{$L_4$}
\addplot[brown] table [color=brown, x=noise, y=5, col sep=comma]{csv_files/radon_trueL_MNIST_reco_noise=True.csv};
\addlegendentry{$\tilde{L}_5$}
\addplot[color=brown,dashed, domain=0.009:1]{0.996};
\addlegendentry{$L_5$}
\end{groupplot}
\end{tikzpicture}
\caption{Lipschitz constraints $L_m$ of networks $\varphi_{\theta(L_m,\delta_\ell)}$ together with respective computed Lipschitz constants $\tilde{L}_m$ of the trained residual functions $f_{\theta(L_m,\delta_\ell)}$  at $m=1,\hdots,5$ for noise levels $\delta_\ell$ with $\ell = 0,\hdots,6$ via approximation training~\textit{(left)} and reconstruction training~\textit{(right)} on the MNIST dataset for $\tilde{A}$ the Radon operator.
}
\end{figure}

\begin{figure}
    \centering
    \includegraphics[width=.97\textwidth]{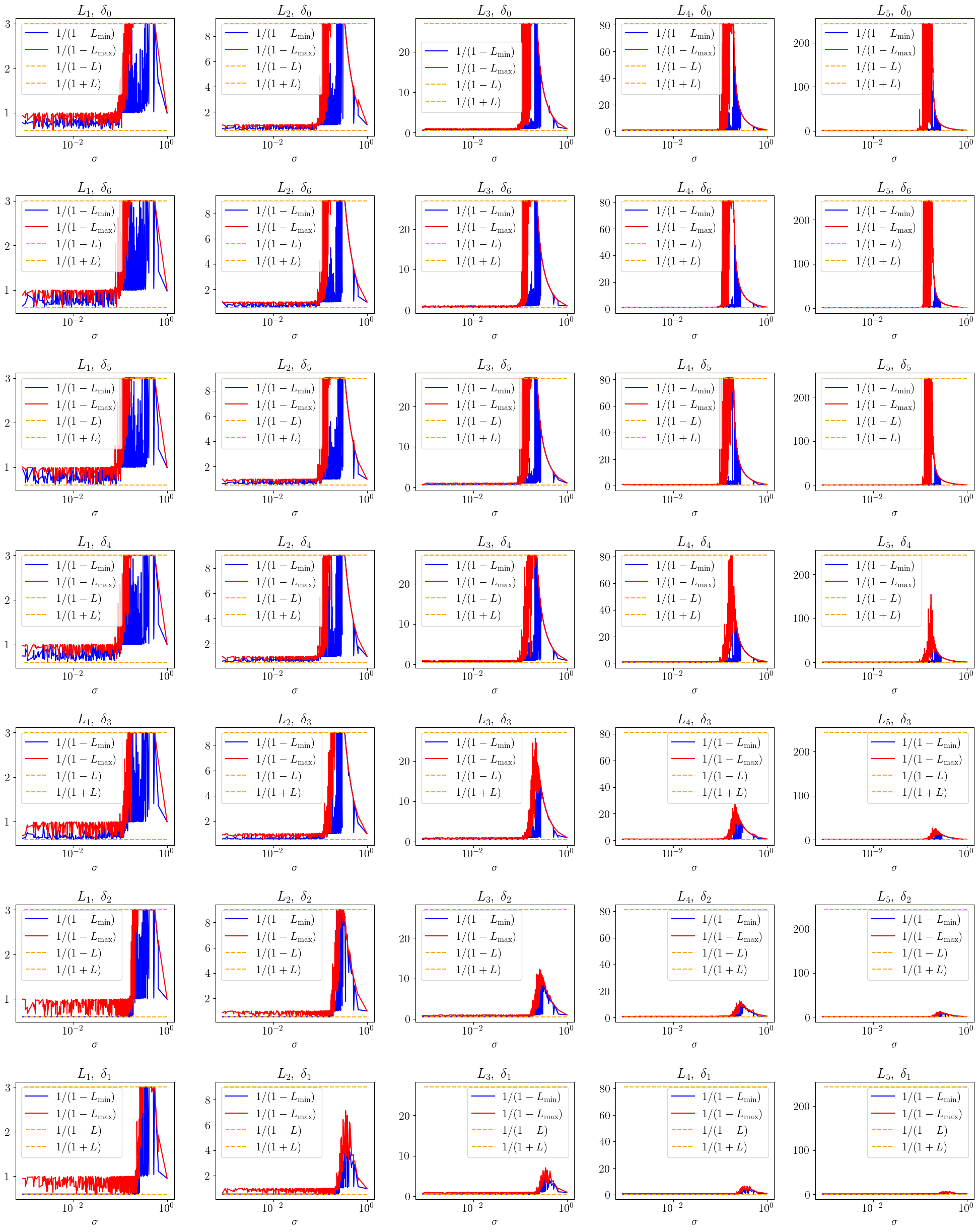}
    \caption{$\frac{1}{1-L_\mathrm{min}}$~\textit{(blue)} for minimal slopes $L_\mathrm{min}$ estimated for $f_{\theta(L_m,\delta_\ell)}$ and $\frac{1}{1-L_\mathrm{max}}$~\textit{(red)} for maximum slope $L_\mathrm{max}$ of $f_{\theta(L_m,\delta_\ell)}$ over singular values of Radon operator $\tilde{A}$ of trained networks with Lipschitz constraint $L=L_m$ at $m=1,\hdots,5$ and noise levels $\delta_\ell$ with $\ell=0,\hdots,6$ via reconstruction training on the MNIST dataset.}
\end{figure}
\FloatBarrier

\bibliographystyle{abbrv}
\bibliography{literature}

\end{document}